\newtheorem{theorem}{Theorem}[section]
\newtheorem{proposition}[theorem]{Proposition}
\newtheorem{lemma}[theorem]{Lemma}
\newtheorem{corollary}[theorem]{Corollary}
\newtheorem{remark}[theorem]{Remark}
\newtheorem{definition}[theorem]{Definition}
\newtheorem{example}[theorem]{Example}
\newtheorem{main result}[theorem]{Main Result}
\newtheorem*{rep@theorem}{\rep@title}
\newcommand{\newreptheorem}[2]{%
\newenvironment{rep#1}[1]{%
 \def\rep@title{#2 \ref{##1}}%
 \begin{rep@theorem}}%
 {\end{rep@theorem}}}
\newcommand{\bs}{\boldsymbol}
\newcommand{\mc}{\mathcal}
\newcommand{\R}{\mathbb{R}}
\newcommand{\Cc}{\mathbb{C}}
\newcommand{\N}{\mathbb{N}}
\renewcommand{\P}{\mathbb{P}}
\newcommand{\Lp}[1][p]{\ensuremath{L^{#1}}}
\newcommandx{\Lpq}[2][1=p, 2=q, usedefault=@]{\ensuremath{L^{#1, #2}}}
\newcommand{\C}[1][]{\ensuremath{C^{#1}}}
\renewcommand{\L}{\ensuremath{\mc{L}}}
\newcommandx{\Bsqp}[3][1=\alpha, 2=q, 3=p, usedefault=@]{\ensuremath{B^{#1}_{#3,#2}}}
\newcommandx{\Lipsp}[2][1=\alpha, 2=p, usedefault=@]{\ensuremath{\mathrm{Lip}(#1,#2)}}
\newcommandx{\Wkp}[2][1=\alpha, 2=p, usedefault=@]{\ensuremath{W^{#1, #2}}}
\newcommand{\sobolev}{\ensuremath{\tau}}
\newcommandx{\XY}[2][1=\theta, 2=q, usedefault=@]{\ensuremath{(X, Y)_{#1,#2}}}
\newcommandx{\tbd}[2][1=b, 2=d, usedefault=@]{\ensuremath{t_{#1,#2}}}
\newcommandx{\Tbd}[2][1=b, 2=d,  usedefault=@]{\ensuremath{T_{#1,#2}}}
\newcommand{\Ib}[1][b]{\ensuremath{I_{#1}}}
\newcommand{\set}[1]{\ensuremath{\left\{#1\right\}}}
\newcommand{\tensor}[1]{\ensuremath{\boldsymbol{#1}}}
\newcommandx{\norm}[2][2=]{\ensuremath{\left\| #1 \right\|_{#2}}}
\newcommandx{\snorm}[2][2=]{\ensuremath{\left| #1 \right|_{#2}}}
\newcommandx{\Vbd}[3][1=b, 2=d, usedefault=@]{\ensuremath{V_{#1,#2,#3}}}
\newcommandx{\Vb}[2][1=b, usedefault=@]{\ensuremath{V_{#1, #2}}}
\renewcommand*\d{\mathop{}\!\mathrm{d}}
\newcommand{\linspan}[1]{\ensuremath{\operatorname{span}{\left\{#1\right\}}}}
\newcommand{\TT}[2]{\ensuremath{\mathcal{TT}_{#2}\left(#1\right)}}
\newcommandx{\res}[3][1=d, 2=\bar{d}]{\ensuremath{\mathcal{R}_{#1\rightarrow #2}#3}}
\newcommandx{\ext}[3][1=d, 2=\bar{d}]{\ensuremath{\mathcal{E}_{#1\rightarrow #2}#3}}
\newcommandx{\E}[3][1=n,3=,usedefault=@]{\ensuremath{E_{#1}\left(#2\right)_{#3}}}
\newcommandx{\tool}[1][1=n, usedefault=@]{\ensuremath{\Phi_{#1}}}
\newcommandx{\Asq}[2][1=\alpha, 2=q, usedefault=@]{\ensuremath{A^{#1}_{#2}}}
\newcommand{\rJ}{\ensuremath{r_{\mathrm{J}}}}
\newcommand{\rB}{\ensuremath{r_{\mathrm{B}}}}
\DeclareMathOperator{\cost}{compl}
\renewcommand{\S}[3]{\ensuremath{\mathcal{S}^{#1,#2}_{#3}}}
\newcommand{\xk}[1][N]{\ensuremath{(x_k)_{k=0}^{#1}}}
\newcommand{\xkb}[1][N]{\ensuremath{({x^b_k)_{k=0}^{#1}}}}
\newcommand{\Sfree}[2]{\ensuremath{\mathcal{S}_{\mathrm{fr}}^{#1,#2}}}
\newcommand{\Sfreeb}[2]{\ensuremath{\mathcal{S}^{b,#1,#2}_{\mathrm{fr}}}}
\newcommand{\indicator}[1]{\ensuremath{\mathds{1}_{#1}}}
\newcommand{\m}{\ensuremath{\bar{m}}}
\newcommandx{\interpol}[2][1=b, 2=d, usedefault=@]{\ensuremath{\mathcal{I}_{#1,#2}}}
\newcommandx{\li}[4][2=b, 3=l, 4=j, usedefault=@]{\ensuremath{#1_{#2,#3,#4}}}
\newcommand{\diff}{\ensuremath{\Delta}}
\renewcommand{\mod}{\ensuremath{\omega}}
\newcommand{\w}{\ensuremath{\mathrm{w}}}
\newcommand{\setf}{\ensuremath{\Omega}}
\newcommand{\Pch}{\ensuremath{P_{\mathrm{Ch}}}}
\newcommand{\PhinB}{\ensuremath{\Phi_n^{\mathrm{B}}}}
\newcommand{\CB}{\ensuremath{c_{\mathrm{B}}}}
\newcommand{\cont}{\ensuremath{\mathfrak{{c}}}}
\DeclareMathOperator*{\dist}{dist}
\renewcommand{\hbar}{\ensuremath{\bar{h}}}
\title{Approximation Theory of Tree Tensor Networks:
Tensorized Univariate Functions -- Part II}
\author{Mazen Ali \and Anthony Nouy}
\address{Fraunhofer ITWM, 67663 Kaiserslautern, Germany}
\address{Centrale Nantes, Nantes Université, LMJL UMR CNRS 6629, France}
\email{mazen.ali@itwm.fraunhofer.de}
\email{anthony.nouy@ec-nantes.fr}
\thanks{Acknowledgments: The authors acknowledge AIRBUS Group for the financial support with the project AtRandom.}
\date{\today}
\keywords{Tensor Networks, Tensor Trains, Matrix Product States,
Neural Networks, Approximation Spaces, Besov Spaces,
direct (Jackson) and inverse (Bernstein) inequalities}
\subjclass[2010]{41A65, 41A15, 41A10 (primary); 68T05, 42C40, 65D99 (secondary)}
\begin{document}

\begin{abstract}
    We study the approximation by tensor networks (TNs) of functions from  classical smoothness classes. 
    The considered approximation tool combines a tensorization of 
     functions in $\Lp([0,1))$, which allows to identify a univariate function with a multivariate function (or tensor), and the use of tree tensor networks (the tensor train format) for exploiting low-rank structures of  multivariate functions. The resulting tool can be interpreted as a feed-forward neural network, with first layers implementing the tensorization, interpreted as a particular featuring step, followed by a sum-product network with sparse architecture. 
  
    In part I of this work, we presented several approximation classes associated with different 
    measures of complexity of tensor networks and studied their properties.  
    
    In this work (part II), we show how classical
    approximation tools, such as polynomials or splines (with fixed or free knots), can be
    encoded as a tensor network with controlled complexity.
    We use this to derive direct (Jackson) inequalities
    for the approximation spaces of tensor networks.
    This is then utilized to show that Besov spaces are
    continuously embedded into these approximation spaces.
    In other words, we show that arbitrary Besov functions
    can be approximated with optimal or near to optimal rate.  
    We also show that an arbitrary function in
    the approximation class possesses no Besov smoothness,
    unless one limits the \emph{depth} of the tensor network. 
\end{abstract}

\maketitle

% !TEX root = ../part2.tex
\section{Introduction}

Approximation of functions is an integral part of mathematics
with many important applications in various other fields
of science, engineering and economics.
Many classical approximation methods -- such
as approximation with polynomials, splines, wavelets,
rational functions, etc.\ -- are by now thoroughly understood.

In recent decades new families of methods have gained
increased popularity due to their success in various applications
-- tensor and neural networks (TNs and NNs). See, e.g., 
\cite{Bachmayr2016, nouy:2017_morbook, cichocki2016tensor1, cichocki2017tensor2, Orus2019, Haykin2009,goodfellow2016deep}
and references therein for an overview.  

In part I of this work (see \cite{partI}), we defined
approximation classes of tensor networks
and studied
their properties.
Our goal was to introduce a new method for
analyzing the expressivity of tensor networks, 
as was done in \cite{Gribonval2019} for
rectified linear unit (ReLU) and
rectified power unit (RePU) neural networks.

In this work (part II), we continue with our endeavor by
studying how these approximation classes of tensor networks
are related to the well-known Besov spaces
(see, e.g., \cite{Triebel1992}).
In particular, we will show that any Besov function can be
approximated with optimal rate with a tensor network.
On the other hand, an arbitrary function from the
approximation class of tensor networks has no Besov smoothness,
unless we restrict the \emph{depth} of the tensor network.
This illustrates the high expressivity of
(deep) tensor networks. Interestingly, similar results
were shown for deep ReLU and RePU networks
in, e.g., \cite{Gribonval2019, yarotsky2018optimal, Opschoor2020, Opschoor201907}. 
We also consider the case of analytic functions and show that they can be approximated with exponential rate.
\\

The outline is as follows. In \Cref{sec:recalling}, 
we begin by recalling some notations and results from part I \cite{partI}.
We then state the main results of this work in \Cref{sec:main-results}.
In \Cref{sec:encoding},
we discuss how classical approximation tools can be encoded with tensor networks
and estimate the resulting complexity.
Among the classical tools considered are
fixed knot splines, free knot splines,
polynomials (of higher order) and
general multi-resolution analysis (MRA).
In \Cref{sec:dirinv},
utilizing results from the previous section,
we show direct estimates for our approximation tool
that lead to the embeddings from
\Cref{thm:introembed}.
We also show that inverse embeddings can only hold if we restrict
the depth of the tensor networks.
We conclude on \Cref{sec:conclusion} by a brief discussion of depth
and sparse connectivity.

For a brief introduction to TNs and NNs, we
refer to the introduction in \cite{partI}.

%%%%%%%%%%%%%%%%%%%%%%%%%%%%%%%%%%%%%
%%%%%%%%%%%%%%%%%%%%%%%%%%%%%%%%%%%%%
%%%%%%%%%%%%%%%%%%%%%%%%%%%%%%%%%%%%%

\section{Recalling Results of Part I}\label{sec:recalling}

%%%%%%%%%%%%%%%%%%%%%%%%%%%%%%%%%%%%%

\subsection{Tensorization}
We consider one-dimensional functions on the unit interval
\begin{equation*}
    f:[0,1)\rightarrow\R.
\end{equation*}
We introduce a uniform partition of $[0,1)$ with $ b^d$ intervals $[x_i,x_{i+1})$ with $x_i = b^{-d} i$, $0 \le i \le b^d$, with base $b = 2,3, ...$, and
exponent $d\in\N_0$. Any $x\in[0,1)$ falls either on or in-between one of
the points $x_i$. Thus, using a $b$-adic expansion of the integer $i$,
we can define a \emph{conversion map} $\tbd$ as
\begin{align*}
x=t_{b,d}(i_1,\ldots,i_d,y) := \sum_{k=1}^di_kb^{-k}+b^{-d}y.% = b^{-d} (\sum_{k=1}^d i_k b^{d-k}  + y),
\end{align*}
for $y\in[0,1)$ and $i_k \in \set{0,\ldots,b-1} := \Ib$.

\begin{definition}[Tensorization Map]\label{def-tensorization-map}
For a base $b\in \N$ ($b\ge 2$) and a level $d\in \N,$ we define the 
 \emph{tensorization map} 
	\begin{align*}
	    \Tbd:\R^{[0,1)}\rightarrow\R^{\Ib^d\times[0,1)}, \quad f \mapsto f \circ t_{b,d} := \tensor{f}
	\end{align*}
which associates to a function 
$f \in \R^{[0,1)}$ the multivariate function 
	$	   \tensor{f}\in \R^{\Ib^d\times[0,1)} $ such that 
	\begin{align*}
	    \tensor{f}(i_1,\ldots,i_d,y):=f(\tbd(i_1,\ldots,i_d,y)).
	\end{align*}	
\end{definition}
For $i=\sum_{k=1}^{d} i_k b^{d-k}$, the partial evaluation $\tensor{f}(i_1,\ldots,i_d,\cdot)$ of $\tensor{f}$ is equal to the function $f(b^{-d}(i + \cdot)) \in \R^{[0,1)}$, which is the restriction of $f$ to the interval $[b^{-d}i,b^{-d}(i+1))$  rescaled to 
$[0, 1)$.

The space $\R^{\Ib^d\times[0,1)}$ can be identified with the algebraic tensor space $$ \mathbf{V}_{b,d} := \R^{\Ib^d} \otimes \R^{[0,1)}=  \underbrace{\R^{\Ib} \otimes \hdots \otimes \R^{\Ib}}_{\text{$d$ times }}\otimes \R^{[0,1)} =: (\R^{\Ib})^{\otimes d} \otimes \R^{[0,1)} ,$$ which is the set of functions $\tensor{f}$ defined on $\Ib^d \times [0,1)$ that admit 
a representation
\begin{equation}\label{algebraic-tensor}
\tensor{f}(i_1,\hdots,i_d,y) = \sum_{k=1}^r v_1^k(i_1) \hdots v_d^k(i_d) g^k(y) :=\sum_{k=1}^r (v_1^k \otimes \hdots \otimes v_d^k \otimes g^k)(i_1,\hdots,i_d,y)    %\label{algebraic-tensor}
\end{equation}
for some $r \in \N$ and for some functions $v_\nu^k \in \R^{\Ib}$ and $g^k \in \R^{[0,1)}$, $1\le k \le r$, $1\le \nu\le d$.

%See \Cref{fig:unfold} for an illustration.
%
%\begin{figure}[h]
%    \centering
%    \begin{subfigure}[h]{.6\textwidth}
%        \centering
%        \includegraphics[scale=.4]{figs/original.png}
%        \subcaption{Function $f:[0,1)\rightarrow\R$}\label{fig11}
%    \end{subfigure}
%    \\
% %   \vspace{2em}
%    \begin{subfigure}[h]{.6 \textwidth}
%        \centering
%        \includegraphics[scale=.4]{figs/unfoldings.png}        
%        \subcaption{Partial evaluations $\tensor{f}^\nu(j_1,j_2,\cdot)$ for $(j_1,j_2) \in \{0,1\}^2$.}\label{fig12}
%    \end{subfigure}
%    \caption{A function $f:[0,1)\rightarrow\R$  and partial evaluations of $\tensor{f}^\nu \in \mathbf{V}_{b,d}$ for $b=d=2$.}
%    \label{fig:unfold}
%\end{figure} 

We showed that $\Lp$ functions can be isometrically identified with
tensors.
\begin{theorem}[Tensorization is an $\Lp$-Isometry]\label{thm:tensorizationmap}
For any $0 < p \le \infty$, define
$$\mathbf{V}_{b,d,L^p} := {\R^{\Ib}}^{\otimes d} \otimes L^p([0,1)) \subset \mathbf{V}_{b,d}.$$
Then, $\Tbd$ is a linear isometry from $\Lp([0,1))$ to  $\mathbf{V}_{b,d,L^p}$ equipped with the (quasi-)norm 
  $\Vert \cdot\Vert_p$ defined by 
 $$\norm{\tensor{f}}[p]^p = %\int \vert \tensor{f} (j_1,\hdots,j_d,y)\vert^p d\mu_{b,d}(j_1,\hdots,j_d,y) =  
 \sum_{j_1\in \Ib} \hdots \sum_{j_d\in \Ib} b^{-d} \Vert  \tensor{f}(j_1,\hdots,j_d,\cdot) \Vert_{p}^p
 $$
 for $p<\infty$,  
or  
$$
\norm{\tensor{f}}[\infty] = \max_{j_1\in \Ib} \hdots \max_{j_d \in \Ib} \Vert \tensor{f}(j_1,\hdots,j_d,\cdot) \Vert_\infty.
$$
\end{theorem}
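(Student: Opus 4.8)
The plan is to verify linearity directly and then establish the isometry one $b$-adic interval at a time, exploiting the identity for partial evaluations recorded just before the statement. Linearity of $\Tbd$ is immediate: since $\tbd$ is a fixed map, $(\alpha f + \beta g)\circ \tbd = \alpha\,(f\circ\tbd) + \beta\,(g\circ\tbd)$, so $\Tbd$ is linear on $\R^{[0,1)}$. For the target space I would first note that $\R^{\Ib}$ is finite-dimensional, whence the algebraic tensor space $\mathbf{V}_{b,d}$ already coincides with all of $\R^{\Ib^d\times[0,1)}$ (expand any $F$ in the $b^d$ delta functions of $\R^{\Ib^d}$ to get a finite representation of the form \eqref{algebraic-tensor}); the genuine content is therefore only that each partial evaluation lands in $\Lp([0,1))$, which will fall out of the norm computation below.

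For the isometry with $p<\infty$ the key step is a change of variables on a single interval. Fix a multi-index $(j_1,\ldots,j_d)\in\Ib^d$ and set $i=\sum_{k=1}^d j_k b^{d-k}$, so that $\sum_{k=1}^d j_k b^{-k}=b^{-d}i$ and hence $\tensor{f}(j_1,\ldots,j_d,y)=f(b^{-d}(i+y))$. Substituting $x=b^{-d}(i+y)$, with $\d x = b^{-d}\d y$, turns $\int_0^1 |f(b^{-d}(i+y))|^p\,\d y$ into $b^{d}\int_{b^{-d}i}^{b^{-d}(i+1)} |f(x)|^p\,\d x$, and this prefactor $b^{d}$ cancels exactly against the weight $b^{-d}$ in the definition of $\norm{\tensor{f}}[p]^p$. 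It then remains to observe that the $b$-adic expansion $(j_1,\ldots,j_d)\mapsto i=\sum_{k=1}^d j_k b^{d-k}$ is a bijection from $\Ib^d$ onto $\{0,\ldots,b^d-1\}$ and that the associated intervals $[b^{-d}i,b^{-d}(i+1))$ tile $[0,1)$. Summing the per-interval identities over all multi-indices thus reassembles $\int_0^1 |f(x)|^p\,\d x = \norm{f}[\Lp([0,1))]^p$, giving $\norm{\Tbd f}[p]=\norm{f}[\Lp([0,1))]$.

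The case $p=\infty$ is handled by the same substitution: for each fixed multi-index one gets $\norm{\tensor{f}(j_1,\ldots,j_d,\cdot)}[\infty]=\esssup_{x\in[b^{-d}i,b^{-d}(i+1))}|f(x)|$, and since the essential supremum over a finite partition equals the maximum of the essential suprema over its pieces, taking the maximum over $(j_1,\ldots,j_d)$ recovers $\esssup_{x\in[0,1)}|f(x)|=\norm{f}[L^\infty([0,1))]$. I do not expect any step to present a genuine difficulty; the points requiring care are purely bookkeeping — matching the Jacobian $b^{-d}$ of the rescaling against the weight $b^{-d}$ so the two cancel, and checking that the multi-index-to-interval map is precisely the bijection induced by the $b$-adic representation, so that the intervals cover $[0,1)$ with no overlap and no gap.
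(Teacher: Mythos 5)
Your proof is correct: linearity by composition with the fixed map $\tbd$, the per-interval change of variables $x=b^{-d}(i+y)$ whose Jacobian $b^{-d}$ exactly cancels the weight in the definition of $\norm{\cdot}[p]$, the bijection $(j_1,\ldots,j_d)\mapsto i=\sum_{k=1}^d j_k b^{d-k}$ between $\Ib^d$ and the $b$-adic intervals tiling $[0,1)$, and the finite-partition argument for the essential supremum when $p=\infty$ are all sound, and the delta-expansion observation correctly settles membership in $\mathbf{V}_{b,d,L^p}$. Note that this paper only recalls the theorem from Part I \cite{partI} without reproducing its proof, but your argument is precisely the standard one implicit in the partial-evaluation identity $\tensor{f}(i_1,\ldots,i_d,\cdot)=f(b^{-d}(i+\cdot))$ stated right after \Cref{def-tensorization-map}.
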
   
For an interpretation of the tensorization as a particular featuring step and its implementation as a specific neural network,  we refer to \cite[Section 4]{partI}.

%%%%%%%%%%%%%%%%%%%%%%%%%%%%%%%%%%%%%

\subsection{Ranks}
The minimal $r\in\N$ such that $\tensor{f}$ admits a representation
as in \eqref{algebraic-tensor}
is referred to as the \emph{canonical tensor rank} of $\tensor{f}$.
Crucial for this work is the following notion of multi-linear rank.

\begin{definition}[$\beta$-rank]\label{def:betarank}
For $\beta \subset \{1,\hdots,d+1\}$, the  $\beta$-rank of $\tensor{f} \in \mathbf{V}_{b,d}$, denoted $r_\beta(\tensor{f})$, is the minimal integer  such that $\tensor{f}$ admits a representation of the form 
 \begin{align}\label{eq:tensorfrep}
        \tensor{f}=\sum_{k=1}^{r_\beta(\tensor{f})}
        \bs{v}^k_\beta\otimes\bs{v}^k_{\beta^c},
    \end{align}
    where $\bs{v}_\beta^k \in \mathbf{V}_\beta$ and 
     $\bs{v}_{\beta^c}^k \in \mathbf{V}_{\beta^c}$.  
\end{definition}

Since a function $f\in\R^{[0,1)}$ admits a representation as a tensor
for different $d\in\N$, we require the following notion of ranks for a function.

\begin{definition}[$(\beta,d)$-rank]\label{def:betadrank}
For a function $f \in \R^{[0,1)}$, $d\in \N$ and $\beta \subset \{1,\hdots,d+1\} $, we define the $(\beta,d)$-rank of $f$, denoted $r_{\beta,d}(f)$, as the rank of its tensorization in $\mathbf{V}_{b,d}$, 
$$
r_{\beta,d}(f) = r_\beta(\Tbd f).
$$
\end{definition}
We mostly consider subsets $\beta$ of the form $\{1,\hdots,\nu\}$ or $\{\nu+1,\hdots,d+1\}$ for some $\nu\in \{1,\hdots,d\}$,
and thus we will use the shorthand notations
$$
r_{\nu}(\tensor{f}) := r_{\{1,\hdots,\nu\}}(\tensor{f}), \quad r_{\nu,d}(f) =  r_{\{1,\hdots,\nu\},d}(f).
$$

The notion of partial evaluations will be useful for estimating
the ranks of a function $f$.
\begin{lemma}\label{link-ranks-d-nu}
Let $f \in \R^{[0,1)}$ and $d\in \N$. For any $1\le \nu\le d,$
$$
r_{\nu,d}(f) = r_{\nu,\nu}(f)$$
and 
$$
r_{\nu,\nu}(f) = \dim\mathrm{span}\{f( b^{-\nu}(j+\cdot)) : 0\le j \le b^\nu-1\} .
%= \dim \mathrm{span}\{f_{\mid [b^{-\nu}j,b^{-\nu}(j+1))}(b^{-\nu} (j + \cdot))  : 0\le j \le b^\nu-1\}  .
$$
\end{lemma}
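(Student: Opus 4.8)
The plan is to establish the single identity
\[
r_{\nu,d}(f) = \dim \linspan{f(b^{-\nu}(j+\cdot)) : 0\le j \le b^\nu-1}
\]
for every $d\ge\nu$ at once. Since the right-hand side is manifestly independent of $d$, specializing to $d=\nu$ yields the second claimed equality $r_{\nu,\nu}(f) = \dim\linspan{f(b^{-\nu}(j+\cdot)):0\le j\le b^\nu-1}$, while comparing a general $d$ with $d=\nu$ gives the first, $r_{\nu,d}(f)=r_{\nu,\nu}(f)$.

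First I would reduce the $\beta$-rank, for $\beta=\{1,\ldots,\nu\}$, to a dimension of partial evaluations. By \Cref{def:betadrank} we have $r_{\nu,d}(f)=r_\beta(\Tbd f)$, and the minimal rank in \eqref{eq:tensorfrep} equals $\dim\Umin{\beta^c}(\Tbd f)$, the dimension of the span in $\mathbf{V}_{\beta^c}$ of all partial evaluations $\Tbd f(i_1,\ldots,i_\nu,\cdot)$ obtained by fixing the first $\nu$ coordinates. One inequality is immediate: from any representation $\Tbd f=\sum_{k=1}^{r}\bs{v}^k_\beta\otimes\bs{v}^k_{\beta^c}$, every partial evaluation lies in $\linspan{\bs{v}^k_{\beta^c}:1\le k\le r}$, so $\dim\Umin{\beta^c}(\Tbd f)\le r_\beta(\Tbd f)$; conversely, expressing each partial evaluation in a basis of $\Umin{\beta^c}(\Tbd f)$ reconstructs $\Tbd f$ with exactly $\dim\Umin{\beta^c}(\Tbd f)$ tensor-product terms. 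This is the standard identification of rank with the dimension of the minimal subspace already used in Part~I.

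The heart of the argument is then an explicit computation of these partial evaluations through the conversion map. Fixing $(i_1,\ldots,i_\nu)$ and writing $i=\sum_{k=1}^\nu i_k b^{\nu-k}\in\{0,\ldots,b^\nu-1\}$, I would split
\[
\tbd(i_1,\ldots,i_d,y)=b^{-\nu}i+b^{-\nu}\,t_{b,d-\nu}(i_{\nu+1},\ldots,i_d,y),
\]
so that, as a function of the remaining coordinates $(i_{\nu+1},\ldots,i_d,y)\in\Ib^{d-\nu}\times[0,1)$, the partial evaluation equals $T_{b,d-\nu}g_i$, the tensorization at level $d-\nu$ of the rescaled restriction $g_i:=f(b^{-\nu}(i+\cdot))$. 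Because $t_{b,d-\nu}$ is a bijection of $\Ib^{d-\nu}\times[0,1)$ onto $[0,1)$, the map $T_{b,d-\nu}$ is linear and injective, hence preserves the dimension of spans; therefore
\[
\dim\Umin{\beta^c}(\Tbd f)=\dim\linspan{T_{b,d-\nu}g_i:0\le i\le b^\nu-1}=\dim\linspan{g_i:0\le i\le b^\nu-1},
\]
which is exactly the $d$-independent right-hand side. For $d=\nu$ the level-$0$ tensorization is the identity and the partial evaluations are the $g_i$ themselves, recovering the second equality verbatim.

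I expect the only delicate points to be bookkeeping rather than conceptual: stating cleanly the reduction of the minimal rank to $\dim\Umin{\beta^c}$, and verifying the splitting of $\tbd$ together with the fact that the leftover coordinates range over precisely the domain of $t_{b,d-\nu}$, so that the partial evaluation is genuinely a tensorization of $g_i$ and injectivity applies. Everything else follows directly from the bijectivity of the conversion map.
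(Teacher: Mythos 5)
Your proof is correct: the reduction of $r_{\nu,d}(f)=r_{\{1,\ldots,\nu\}}(\Tbd f)$ to the dimension of the span of the partial evaluations is the standard minimal-subspace characterization of the rank in \eqref{eq:tensorfrep} (valid here since there are only $b^\nu$ slices and $\R^{\Ib^\nu}$ contains every possible coefficient function), and the factorization $\tbd(i_1,\ldots,i_d,y)=b^{-\nu}\bigl(i+t_{b,d-\nu}(i_{\nu+1},\ldots,i_d,y)\bigr)$ together with the injectivity of $T_{b,d-\nu}$ (from surjectivity of the conversion map) correctly yields the $d$-independent identity from which both claims follow. Note that the paper itself only recalls this lemma from Part I without reproducing a proof, but your route --- partial evaluations of $\Tbd f$ are tensorizations of the rescaled restrictions $f(b^{-\nu}(i+\cdot))$ --- is exactly the mechanism the framework is built on (cf.\ the remark following \Cref{def-tensorization-map}), so this is essentially the intended argument.
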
	

%%%%%%%%%%%%%%%%%%%%%%%%%%%%%%%%%%%%%

\subsection{Tensor Subspaces}
For a subspace $S\subset \R^{[0,1)}$, we define the tensor subspace
\begin{align*}
 \mathbf{V}_{b,d,S} :=  (\R^{\Ib})^{\otimes d} \otimes S \subset  \mathbf{V}_{b,d}, 
\end{align*}
and the corresponding subspace of functions,
$$
\Vbd{S} = \Tbd^{-1}(\mathbf{V}_{b,d,S}).
$$
We frequently use $S=\P_m$ where $\P_m$ is the space of polynomials of degree up to $m\in\N_0$, with the shorthand notation
\begin{align}\label{def:vbdm}
    \Vbd{m}:=\Vbd{\P_m}.
\end{align}

It will be important in the sequel that $S$ satisfies the following property,
analogous to properties satisfied by spaces generated by
multi-resolution analysis (MRA).

\begin{definition}[Closed under Dilation]\label{def:badicdil}
We say that a linear space $S$ is closed under $b$-adic dilation if for any $f \in S$ and any $k \in \{0,\hdots,b-1\}$, 
$$
f(b^{-1}(\cdot + k)) \in S.
$$
\end{definition}

If $S$ is closed under $b$-adic dilation, we can show that
\begin{align*}
S := V_{b,0,S} \subset   V_{b,1,S} \subset \hdots \subset  V_{b,d,S} \subset \hdots,
\end{align*}
and that
\begin{align*}
    \Vb{S}:=\bigcup_{d\in\N}\Vbd{S},
\end{align*}
is a linear subspace of $\R^{[0,1)}$. Moreover, we have

\begin{lemma}\label{rank-bound-VbdS-closed-dilation} 
Let $S$ be closed under $b$-adic dilation.
\begin{enumerate}
\item[(i)] If $f \in S$, then for any $d\in \N$, $f\in V_{b,d,S}$ satisfies 
$$
r_{\nu,d}(f) \le \min\{b^\nu , \dim S\}, \quad 1\le \nu\le d.
$$
\item[(ii)] If $f\in V_{b,d,S}$, then for any $\bar d \ge d$, $f \in V_{b,\bar d,S}$ satisfies 
      \begin{align}
       & r_{\nu,\bar d}(f) = r_{\nu,d}(f) \leq \min\set{b^\nu, b^{d-\nu} \dim S},\quad  1\le \nu \le d,\notag\\
%\label{eq:rjsmall}
      &  r_{\nu,\bar d}(f)\leq \min\set{b^{\nu}  ,\, \dim S}, \quad d < \nu \le \bar d\label{eq:rjbig}.
    \end{align}
    \end{enumerate}
\end{lemma}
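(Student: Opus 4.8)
The plan is to reduce both statements to the rank characterization of \Cref{link-ranks-d-nu}, namely $r_{\nu,\bar d}(f) = r_{\nu,\nu}(f) = \dim\linspan{f(b^{-\nu}(j+\cdot)) : 0\le j \le b^\nu-1}$ for $\nu \le \bar d$, and then to locate the rescaled restrictions $f(b^{-\nu}(j+\cdot))$ inside $S$ or $\Vbd[b][d-\nu]{S}$ using that $S$ is closed under $b$-adic dilation. First I would record the \emph{iterated dilation property}: applying \Cref{def:badicdil} repeatedly along the $b$-adic digits $j = k_\nu + b k_{\nu-1} + \cdots + b^{\nu-1}k_1$, a short induction on $\nu$ shows that $g \in S$ implies $g(b^{-\nu}(j+\cdot)) \in S$ for every $0\le j \le b^\nu-1$. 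Statement (i) is then immediate: if $f\in S$, each partial evaluation $f(b^{-\nu}(j+\cdot))$ lies in $S$, so the span in \Cref{link-ranks-d-nu} is a subspace of $S$ generated by $b^\nu$ vectors, whence $r_{\nu,d}(f) \le \min\{b^\nu,\dim S\}$.

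The key identity for (ii) is a bookkeeping computation. Writing $j$ in its first $\nu$ $b$-adic digits $(i_1,\ldots,i_\nu)$, I would verify for $\nu\le d$ that the tensorization at level $d-\nu$ of the restriction equals the partial evaluation of $\Tbd f$ at the frozen first modes, i.e.\ $\Tbd[b][d-\nu](f(b^{-\nu}(j+\cdot)))(i_{\nu+1},\ldots,i_d,y) = (\Tbd f)(i_1,\ldots,i_d,y)$; this follows from composing the conversion maps and checking $b^{-\nu}(j + \tbd[b][d-\nu](i_{\nu+1},\ldots,i_d,y)) = \tbd(i_1,\ldots,i_d,y)$. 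Since $f\in\Vbd{S}$ means $\Tbd f \in (\R^{\Ib})^{\otimes d}\otimes S$, such a partial evaluation lies in $(\R^{\Ib})^{\otimes(d-\nu)}\otimes S$, so $f(b^{-\nu}(j+\cdot)) \in \Vbd[b][d-\nu]{S}$. As $\Tbd[b][d-\nu]$ is a linear isomorphism onto its image, $\dim\Vbd[b][d-\nu]{S} = b^{d-\nu}\dim S$, and the span of the $b^\nu$ restrictions sits inside it; together with the trivial count of $b^\nu$ generators this gives $r_{\nu,d}(f) \le \min\{b^\nu, b^{d-\nu}\dim S\}$ for $1\le\nu\le d$. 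The stability $r_{\nu,\bar d}(f) = r_{\nu,d}(f)$ is just two applications of \Cref{link-ranks-d-nu}, both sides equaling $r_{\nu,\nu}(f)$.

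For the regime $d < \nu \le \bar d$ I would restrict $f$ first to a level-$d$ cell, which by the $\nu=d$ instance of the identity lands in $\Vbd[b][0]{S} = S$, and then apply the iterated dilation property on the remaining $\nu-d$ levels: splitting $j = m\,b^{\nu-d} + j'$ with $m$ the first $d$ digits and $j'$ the last $\nu-d$ digits, one checks $f(b^{-\nu}(j+\cdot)) = h_m(b^{-(\nu-d)}(j'+\cdot))$ with $h_m := f(b^{-d}(m+\cdot)) \in S$, so $f(b^{-\nu}(j+\cdot)) \in S$. Hence the relevant span lies in $S$ and $r_{\nu,\bar d}(f) \le \min\{b^\nu,\dim S\}$, as claimed in \eqref{eq:rjbig}.

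The main obstacle is the index bookkeeping in the key identity of the second paragraph: correctly matching the $b$-adic digits of $j$ with the frozen modes of $\Tbd f$ and confirming that the rescaling constants $b^{-d}m + b^{-\nu}(j'+\cdot)$ compose as asserted. Once that identity is established, every rank bound reduces to a dimension count on a tensor-product space combined with the closed-under-dilation hypothesis. Some care is also needed at the boundary $\nu = d$, where both displayed lines must agree (there $b^{d-\nu}\dim S = \dim S$), and for infinite-dimensional $S$, where only the $b^\nu$ count is informative while the $\dim S$ bounds are vacuous.
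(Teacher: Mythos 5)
Your proposal is correct, and it takes what is evidently the intended route: this lemma is only \emph{recalled} in this paper (it is quoted from Part I without proof), and the argument you give --- reduce every bound to the span characterization of \Cref{link-ranks-d-nu}, then place the rescaled restrictions $f(b^{-\nu}(j+\cdot))$ inside $S$ or inside $V_{b,d-\nu,S}$ via iterated $b$-adic dilation --- is exactly the pattern the paper itself follows when it proves the downstream results (\Cref{ranks-polynomials}, \Cref{thm:fixedknot}, \Cref{thm:freeknot}). Your index bookkeeping checks out: $b^{-\nu}\bigl(j + t_{b,d-\nu}(i_{\nu+1},\ldots,i_d,y)\bigr) = t_{b,d}(i_1,\ldots,i_d,y)$ and, for $d<\nu$, the splitting $j = m\,b^{\nu-d}+j'$ gives $f(b^{-\nu}(j+\cdot)) = h_m(b^{-(\nu-d)}(j'+\cdot))$ with $h_m\in S$. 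One small omission: the lemma also asserts the memberships $f\in V_{b,d,S}$ in (i) and $f\in V_{b,\bar d,S}$ in (ii); these follow at once from your own computations specialized to $\nu=d$ (resp.\ $\nu=\bar d$), since all level-$d$ (resp.\ level-$\bar d$) partial evaluations then lie in $S$, but you should say so explicitly.
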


Letting $\mathcal{I}_S$ be a linear projection operator from $\Lp([0,1))$ to a finite-dimensional space $S$, we 
define a linear operator $\mathcal{I}_{b,d,S} $ from  $\Lp([0,1))$ to $V_{b,d,S}$ 
by 
\begin{equation*}
	(\mathcal{I}_{b,d,S} f) (b^{-d} (j +\cdot))=  \mathcal{I}_{S} (f(b^{-d} (j+\cdot ))), \quad 0\le j < b^d,\label{local-projection}
\end{equation*}
or \cite[Lemma 2.29]{partI}
\begin{align}
 \mathcal{I}_{b,d,S}  = T_{b,d}^{-1}  \circ  (id_{\{1,\hdots,d\}} \otimes \mathcal{I}_S) \circ T_{b,d}. \label{tensor-structure-local-projection}
\end{align}
We can bound the ranks of local projections onto $S$
in the following sense.
\begin{lemma}[Local projection ranks]\label{local-projection-ranks}
For any $f\in \Lp$, $\mathcal{I}_{b,d,S} f \in V_{b,d,S}$ satisfies 
$$
r_{\nu,d}(\mathcal{I}_{b,d,S} f) \le r_{\nu,d}(f) ,\quad 1\le \nu\le d.
$$
\end{lemma}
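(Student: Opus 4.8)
The plan is to exploit the tensor-product structure of the local projection recorded in \eqref{tensor-structure-local-projection}, namely
\begin{equation*}
\mathcal{I}_{b,d,S} = T_{b,d}^{-1} \circ (id_{\{1,\hdots,d\}} \otimes \mathcal{I}_S) \circ T_{b,d},
\end{equation*}
together with \Cref{def:betadrank}, which expresses $r_{\nu,d}$ as a $\beta$-rank of the tensorization. Writing $\tensor{f} = T_{b,d} f$ and $A := id_{\{1,\hdots,d\}} \otimes \mathcal{I}_S$, we have $r_{\nu,d}(\mathcal{I}_{b,d,S} f) = r_\beta(A\tensor{f})$ and $r_{\nu,d}(f) = r_\beta(\tensor{f})$ for $\beta = \{1,\hdots,\nu\}$. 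It therefore suffices to show that $A$ does not increase the $\beta$-rank.

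First I would observe that, because $\nu \le d$, the continuous $y$-coordinate (index $d+1$) always lies in $\beta^c = \{\nu+1,\hdots,d+1\}$. Since $\mathcal{I}_S$ touches only this coordinate, $A$ acts as the identity on the factor $\mathbf{V}_\beta = (\R^{\Ib})^{\otimes \nu}$ and modifies only the complementary factor: on $\mathbf{V}_\beta \otimes \mathbf{V}_{\beta^c}$ it factorizes as $A = id_\beta \otimes B$, where $B = id_{\{\nu+1,\hdots,d\}} \otimes \mathcal{I}_S$ acts on $\mathbf{V}_{\beta^c}$.

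Next I would take a minimal $\beta$-representation of $\tensor{f}$ as in \eqref{eq:tensorfrep},
\begin{equation*}
\tensor{f} = \sum_{k=1}^{r_\beta(\tensor{f})} \bs{v}^k_\beta \otimes \bs{v}^k_{\beta^c},
\end{equation*}
and apply $A$ term by term. Since $A = id_\beta \otimes B$ leaves each $\bs{v}^k_\beta$ untouched, this yields
\begin{equation*}
A\tensor{f} = \sum_{k=1}^{r_\beta(\tensor{f})} \bs{v}^k_\beta \otimes B\bs{v}^k_{\beta^c},
\end{equation*}
a representation of $A\tensor{f}$ of the form \eqref{eq:tensorfrep} using at most $r_\beta(\tensor{f})$ terms. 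As the $\beta$-rank is by definition the minimal number of terms in such a representation, we conclude $r_\beta(A\tensor{f}) \le r_\beta(\tensor{f})$, which is exactly the claimed inequality.

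I do not expect a genuine obstacle here: the whole argument rests on the bookkeeping observation that $\mathcal{I}_S$ acts only on the $y$-coordinate, which lies in $\beta^c$, so that $A$ preserves the $\beta$-factors of any representation and can only decrease the number of terms. The single point requiring care is that the coordinate $d+1$ indeed belongs to $\beta^c$, which holds precisely because we restrict to $\beta = \{1,\hdots,\nu\}$ with $1 \le \nu \le d$. Equivalently, one could invoke the span characterization of \Cref{link-ranks-d-nu}: the level-$\nu$ partial evaluations of $\mathcal{I}_{b,d,S} f$ are obtained by applying the fixed linear map $\mathcal{I}_{b,d-\nu,S}$ to those of $f$, so their span cannot have larger dimension; but the representation-based argument above is the most direct.
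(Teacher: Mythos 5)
Your proof is correct. Note that this lemma is one of the results recalled from Part I, so the present paper states it without proof; but your argument — writing $\mathcal{I}_{b,d,S}$ in the factored form \eqref{tensor-structure-local-projection}, observing that the $y$-coordinate $d+1$ lies in $\beta^c$ for $\beta=\{1,\hdots,\nu\}$ with $\nu\le d$, so that $id_{\{1,\hdots,d\}}\otimes\mathcal{I}_S$ acts as $id_\beta\otimes B$ and maps any minimal representation \eqref{eq:tensorfrep} to a representation of $\mathcal{I}_{b,d,S}f$ with the same number of terms — is exactly the natural argument this structure is set up for, and your fallback via the span characterization of \Cref{link-ranks-d-nu} is equally valid.
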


%%%%%%%%%%%%%%%%%%%%%%%%%%%%%%%%%%%%%

\subsection{Tensor Train Format and Corresponding Approximation Tool}

In this work, we will be using tree tensor networks, and more precisely the \emph{tensor train format}, to define our approximation tool. 
\begin{definition}[Tensor Train Format]\label{def:ttfuncs}
	The set of tensors in $\mathbf{V}_{b,d}$ in tensor train (TT) format 
	with ranks at most $\bs r:=(r_\nu)_{\nu=1}^d$  is defined as
	\begin{align*}
	    \TT{\mathbf{V}_{b,d,S}}{\bs r}:=\set{\tensor{f} \in\mathbf{V}_{b,d,S}:\;r_\nu(\tensor{f})\leq r_\nu , \; 1 \le \nu\le d}.
	\end{align*}
	This defines a set of univariate functions 
	$$\Phi_{b,d,S,\bs r}= T_{b,d}^{-1} ( \TT{\mathbf{V}_{b,d,S}}{\bs r}) = \{f \in V_{b,d,S} : r_{\nu}(f) \le r_\nu , 1\le \nu \le d\},$$
	where $r_\nu(f) := r_{\nu,d}(f)$, that we further call tensor train format for univariate functions.
\end{definition}
Letting $\{\varphi_k\}_{k=1}^{\dim S}$ be a basis of $S$, a tensor $\tensor{f} \in\TT{\mathbf{V}_{b,d,S}}{\bs r}$ admits a representation
\begin{align}
    \tensor{f}(i_1,\ldots,i_d, y)&=\sum_{k_1=1}^{r_1}\cdots\sum_{k_d=1}^{r_d} \sum_{k_{d+1}}^{\dim S} v_1^{k_1}(i_1) v_2^{k_1,k_2}(i_2)
    v_3^{k_2,k_3}(i_3)\cdots v_d^{k_{d-1},k_d}(i_d) v_{d+1}^{k_d,k_{d+1}} \varphi_k(y),
\label{repres-tt}
    \end{align}
where  the parameters $\mathbf{v}:= (v_1 ,\hdots,v_{d+1}) $ form a   tensor network  with 
$$
 \mathbf{v} := (v_1 ,\hdots,v_{d+1}) \in \R^{b\times r_1} \times \R^{b\times  r_1 \times r_2} \times \hdots  \times \R^{b\times  r_{d-1} \times r_d} \times \R^{r_d \times \dim S} := \mathcal{P}_{b,d,S,\bs r}. 
$$
Denoting by $\mathcal{R}_{b,d,S,\bs r}(\mathbf{v})$ the map  which associates to a tensor network $\mathbf{v}$ the function $f = T_{b,d} \tensor f$ with $\tensor f$ defined by \eqref{repres-tt}, we have 
	$$\Phi_{b,d,S,\bs r} = \{  \varphi = \mathcal{R}_{b,d,S,\bs r}(\mathbf{v})
	: \mathbf{v} \in  \mathcal{P}_{b,d,S,\bs r} \}.$$
We introduced three different measures
of complexity of a tensor network $\mathbf{v} \in \mathcal{P}_{b,d,S,\bs r}$:
\begin{align}\label{eq:complex}    
   \cost_{\mathcal{N}}(\mathbf{v})  &:=  
  \sum_{\nu=1}^{d} r_\nu,\\  
   \cost_\mathcal{C}(\mathbf{v})  &:=
  br_1+b\sum_{k=2}^{d}r_{k-1}  r_k+
    r_d \dim S,\notag \\    
% \end{align*}
% where  $d(\varphi)$ is the minimal $d$ such that $\varphi \in V_{b,d,S}$ and $ r_{\nu}(\varphi)$ the minimal integers such that $\varphi \in  \Phi_{b,d(\varphi),S,\bs r}$ (that are the so-called TT-ranks of the tensor $ T_{b,d} \varphi$), and 
%\begin{align*}
    \cost_\mathcal{S}(\mathbf{v}) &:=  \sum_{\nu=1}^{d+1} \Vert v_\nu \Vert_{\ell_0} \notag,
\end{align}
where $\Vert v_\nu\Vert_{\ell_0}$ is the number of non-zero entries in the tensor $v_\nu$.  
The function
$\cost_\mathcal{C}(\mathbf{v})$ is a natural measure of complexity equal to the number of parameters. The function
$\cost_\mathcal{S}$ is also a natural measure of complexity taking account the sparsity of tensors in the tensor network, and equal to the number of non-zero parameters. 
When interpreting a tensor network $\mathbf{v}$ as a sum-product neural network,  $\cost_\mathcal{N}(\mathbf{v})$ corresponds to the number of neurons, $\cost_\mathcal{C}(\mathbf{v})$ to the number of weights, and $\cost_\mathcal{S}(\mathbf{v})$ the number of non-zero weights (taking into account sparsity in the connectivity of the network). 

Our approximation tool for univariate functions is defined as
 $$
\tool[] := (\Phi_n)_{n\in \N}, \quad  \Phi_n = \{\varphi \in \Phi_{b,d,S,\bs r} : d\in \N, \bs r \in \N^d , \cost(\varphi) \le n\},
 $$
 where $\cost(\varphi)$ is a measure of complexity of a function $\varphi$, defined as 
 $$
 \cost(\varphi) := \min \{\cost(\mathbf{v}) :  \mathcal{R}_{b,d,S,\bs r}(\mathbf{v}) = \varphi, d \in \N , \bs r\in \N^d\},
 $$ 
 where the infimum is taken over all tensor networks $\mathbf{v}$ whose realization is  the function  $\varphi$. 
The three complexity measures  define three types of subsets
\begin{align*}%\label{eq:sets}
    \tool^{\mc N}&:=\set{\varphi\in\tool[]:\;\cost_{\mc N}(\varphi)\leq n},\\
        \tool^{\mc C}&:=\set{\varphi\in\tool[]:\;\cost_{\mc C}(\varphi)\leq n},\notag\\
            \tool^{\mc S}&:=\set{\varphi\in\tool[]:\;\cost_{\mc S}(\varphi)\leq n}.\notag
\end{align*}
For the implementation of the resulting approximation tool as a particular feed-forward neural network, we refer to \cite[Section 4]{partI}.

\subsection{Approximation Spaces}\label{sec:appspacestensors}

%We conclude by a brief refresher of approximation spaces in general
%and, in particular, approximation spaces of tensor networks from
%part I \cite{partI}.

Let $X$ be a quasi-normed linear space, $\tool\subset X$  subsets of $X$
for $n\in\N_0$ and $\tool[]:= (\tool)_{n\in \N_0}$ an approximation tool.
We define the best approximation error
\begin{align*}
E_n(f):= E(f,\Phi_n):=\inf_{\varphi\in\tool}\norm{f-\varphi}[X].
\end{align*}
The approximation classes $\Asq$ of $\tool[] = (\tool)_{n\in \N_0}$ are defined by  
    \begin{align*}
        \Asq:=\Asq(X):=\Asq(X,\tool[]):=\set{f\in X:\; \norm{f}[\Asq]<\infty},
    \end{align*}
    for $\alpha>0$ and $0<q\le \infty$, with 
    \begin{align*}
        \norm{f}[\Asq]:=
        \begin{cases}
            \left(\sum_{n=1}^\infty[n^\alpha E_{n-1}({f})]^q\frac{1}{n}\right)^{1/q},&\quad 0<q<\infty,\\
            \sup_{n\geq 1}[n^\alpha E_{n-1}({f})],&\quad q=\infty.
        \end{cases}
    \end{align*}
Considering $\tool \in \{\tool^{\mathcal{N}} , \tool^{\mathcal{C}}, \tool^{\mathcal{S}} \}$, we obtain families of 
approximation classes of tensor networks
\begin{align}\label{eq:approxclasses}
N_q^\alpha(X) &:= A_q^\alpha(X , (\tool^{\mathcal{N}})_{n\in \N}),\\
C_q^\alpha(X) &:= A_q^\alpha(X ,  (\tool^{\mathcal{C}})_{n\in \N}),\notag\\
S_q^\alpha(X) &:= A_q^\alpha(X ,  (\tool^{\mathcal{S}})_{n\in \N}).\notag
\end{align}
There are a few properties of $\tool[]$ that turn out to be
crucial, if one is to obtain strong statements about the properties of the associated approximation classes.
\begin{enumerate}[label=(P\arabic*)]
    \item\label{P1}    $0\in\tool$, $\tool[0]=\set{0}$.
    \item\label{P2}     $\tool[n]\subset\tool[n+1]$.
    \item\label{P3}     $a\tool=\tool$ for any $a\in\R\setminus\set{0}$.
    \item\label{P4}     $\tool+\tool\subset\tool[cn]$ for some $c:=c(\tool[])$.
    \item\label{P5}     $\bigcup_{n\in \N_0} \tool$ is dense in $X$.
    \item\label{P6}     $\tool$ is proximinal in $X$, i.e. each $f\in X$ has a best approximation in $\tool$.
\end{enumerate}
In particular, \ref{P1}--\ref{P4} together with a Jackson inequality imply that the approximation classes are  quasi-normed linear spaces. 
A main result of Part I \cite{partI} is
\begin{theorem}\label{comparing-spaces}
For any $\alpha>0$, $0<p\leq\infty$ and $0<q \le \infty$,
the approximation classes $N_q^\alpha(\Lp)$, $C_q^\alpha(X)$
and $S_q^\alpha(X)$ are quasi-normed linear spaces
satisfying the continuous embeddings
\begin{align*}
&C^{\alpha}_q(L^p) \hookrightarrow S^{\alpha}_q(L^p) \hookrightarrow N^{\alpha}_q(L^p)\hookrightarrow   C^{\alpha/2}_q(L^p).
\end{align*}
\end{theorem}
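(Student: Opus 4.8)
The plan is to reduce all three embeddings to pointwise comparisons between the three complexity measures $\cost_{\mathcal{N}}$, $\cost_{\mathcal{S}}$, $\cost_{\mathcal{C}}$ evaluated on a single network, and then to translate these into relations between the best-approximation errors $E_n^{\mathcal{N}}(f)=E(f,\Phi_n^{\mathcal{N}})$, $E_n^{\mathcal{S}}(f)$, $E_n^{\mathcal{C}}(f)$. That each class is a quasi-normed linear space I would obtain from the abstract framework recalled above: it suffices to verify \ref{P1}--\ref{P4} and a Jackson inequality for each tool. Properties \ref{P1}--\ref{P3} are immediate, and \ref{P4} rests on subadditivity of the complexity of a sum $\varphi+\psi$, whose tensor network is the block-structured concatenation of those of $\varphi$ and $\psi$: the $\beta$-ranks satisfy $r_\nu(\varphi+\psi)\le r_\nu(\varphi)+r_\nu(\psi)$, so that $\cost_{\mathcal{N}}$ and $\cost_{\mathcal{S}}$ add up to a factor $2$, while the subadditivity of $\cost_{\mathcal{C}}$ needs in addition that consecutive tensor-train ranks differ by at most a factor $b$ (which follows from the matricization structure) in order to absorb the cross terms $r_{k-1}(\varphi)r_k(\psi)$.

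For the two rate-preserving embeddings I would establish
\[
\cost_{\mathcal{N}}(\varphi)\le\cost_{\mathcal{S}}(\varphi)\le\cost_{\mathcal{C}}(\varphi)
\]
for every $\varphi$ in the tool. The right inequality is immediate: evaluating on a $\cost_{\mathcal{C}}$-minimal network, the number of nonzero entries never exceeds the total number of entries. For the left inequality I would start from a $\cost_{\mathcal{S}}$-minimal network realizing $\varphi$ and prune any rank index whose slice in the corresponding core vanishes identically; this changes neither $\varphi$ nor the sparsity, so every one of the $s_\nu$ values of the $\nu$-th rank index then occurs in some nonzero entry, giving $\|v_\nu\|_{\ell_0}\ge s_\nu\ge r_\nu(\varphi)$ and hence $\cost_{\mathcal{S}}(\varphi)\ge\sum_\nu r_\nu(\varphi)=\cost_{\mathcal{N}}(\varphi)$. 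These two inequalities yield $\Phi_n^{\mathcal{C}}\subset\Phi_n^{\mathcal{S}}\subset\Phi_n^{\mathcal{N}}$ for every $n$, therefore $E_n^{\mathcal{N}}(f)\le E_n^{\mathcal{S}}(f)\le E_n^{\mathcal{C}}(f)$, and directly from the definition of the $A_q^\alpha$ quasi-norm the continuous embeddings $C_q^\alpha(L^p)\hookrightarrow S_q^\alpha(L^p)\hookrightarrow N_q^\alpha(L^p)$ with embedding constant $1$.

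The remaining embedding is where the rate is halved, and I expect it to be the main obstacle. First I would prove the quadratic estimate $\cost_{\mathcal{C}}(\varphi)\le c\,\cost_{\mathcal{N}}(\varphi)^2$ with $c=c(b,\dim S)$, by evaluating $\cost_{\mathcal{C}}$ on a $\cost_{\mathcal{N}}$-minimal network with ranks $r_\nu=r_\nu(\varphi)$: since $\sum_k r_{k-1}r_k\le(\sum_\nu r_\nu)^2$ and the boundary terms $b\,r_1$, $r_d\dim S$ are dominated by a constant times $\cost_{\mathcal{N}}(\varphi)$, the whole expression is controlled by $\cost_{\mathcal{N}}(\varphi)^2$. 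This yields the set inclusion $\Phi_n^{\mathcal{N}}\subset\Phi_{cn^2}^{\mathcal{C}}$ and hence $E_{cn^2}^{\mathcal{C}}(f)\le E_n^{\mathcal{N}}(f)$. To turn this into $N_q^\alpha(L^p)\hookrightarrow C_q^{\alpha/2}(L^p)$ I would use the standard dyadic characterization $\|f\|_{A_q^\beta}^q\asymp\sum_{j\ge0}(2^{j\beta}E_{2^j}(f))^q$ (with the supremum when $q=\infty$): choosing an integer $n_j\sim 2^{j/2}$ with $c\,n_j^2\le 2^j$, monotonicity of the error together with $E_{cn_j^2}^{\mathcal{C}}(f)\le E_{n_j}^{\mathcal{N}}(f)$ gives $2^{j\alpha/2}E_{2^j}^{\mathcal{C}}(f)\lesssim n_j^\alpha E_{n_j}^{\mathcal{N}}(f)$, because $2^{j\alpha/2}\sim(c\,n_j^2)^{\alpha/2}=c^{\alpha/2}n_j^\alpha$; since $j\mapsto n_j$ meets each dyadic scale of $n$ a bounded number of times, summing over $j$ bounds $\|f\|_{C_q^{\alpha/2}}$ by a constant multiple of $\|f\|_{N_q^\alpha}$. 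Beyond this rate-halving bookkeeping, the genuinely delicate points are the passage from the complexity of a network to the functional complexity $\cost_\bullet(\varphi)$ defined by minimization over representations, and the pruning argument ensuring that a $\cost_{\mathcal{S}}$-minimal network may be taken reduced without increasing its sparsity.
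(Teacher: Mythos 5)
This theorem is not proved in the present paper at all: it is recalled verbatim from Part I \cite{partI} (it appears as \Cref{comparing-spaces} only as background), so there is no in-document proof to compare yours against. Judged against the framework the paper does recall, your strategy is the natural one and, as far as I can tell, the same as the one used in Part I: the chain $\cost_{\mathcal N}(\varphi)\le\cost_{\mathcal S}(\varphi)\le\cost_{\mathcal C}(\varphi)$ gives $\Phi_n^{\mathcal C}\subset\Phi_n^{\mathcal S}\subset\Phi_n^{\mathcal N}$, hence the two rate-preserving embeddings with constant $1$; your pruning argument is the right way to handle the minimization over representations (a pruned level-$d$ network has all slices nonzero, so $\norm{v_\nu}[\ell_0]\ge s_\nu\ge r_{\nu,d}(\varphi)$, and $\cost_{\mathcal N}(\varphi)$ is a minimum over levels, so comparing at the level of the $\mathcal S$-minimal network suffices); and the quadratic bound $\cost_{\mathcal C}(\varphi)\le c\,\cost_{\mathcal N}(\varphi)^2$ plus the dyadic re-indexing (i.e.\ $\Phi_n^{\mathcal N}\subset\Phi_{cn^2}^{\mathcal C}$ implies $N^\alpha_q(\Lp)\hookrightarrow C^{\alpha/2}_q(\Lp)$) is exactly where the halved rate comes from.

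Two details in your sketch of \ref{P4} deserve patching. First, the block concatenation of two networks presupposes that $\varphi$ and $\psi$ are tensorized at the \emph{same} level $d$; when $d(\varphi)\neq d(\psi)$ you must first lift the shallower function to the common level, which is where closedness of $S$ under $b$-adic dilation enters (\Cref{def:badicdil}, \Cref{rank-bound-VbdS-closed-dilation}(ii)): the extra levels carry ranks at most $\dim S$, and their cost can be absorbed into a constant depending on $b$ and $\dim S$ because every level contributes at least $1$ (resp.\ $b$) to $\cost_{\mathcal N}$ (resp.\ $\cost_{\mathcal C}$, $\cost_{\mathcal S}$), so the depth of either summand is dominated by its complexity. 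Second, your absorption of the cross terms is correct but needs both directions of the rank-ratio property for minimal ranks, $r_k\le b\,r_{k-1}$ and $r_{k-1}\le b\,r_k$ (both follow from \Cref{link-ranks-d-nu}, since passing from level $k-1$ to level $k$ splits each partial evaluation into $b$ pieces and conversely reassembles them linearly), combined with AM--GM to get
\begin{align*}
r_{k-1}(\varphi)\,r_k(\psi)+r_{k-1}(\psi)\,r_k(\varphi)\le b\,\bigl(r_{k-1}(\varphi)r_k(\varphi)+r_{k-1}(\psi)r_k(\psi)\bigr),
\end{align*}
which yields \ref{P4} for $\cost_{\mathcal C}$ with $c=2(1+b)$ up to the level-extension correction above. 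With these repairs your argument is complete and, in approach, coincides with the Part I proof this paper cites.
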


%%%%%%%%%%%%%%%%%%%%%%%%%%%%%%%%%%%%%
%%%%%%%%%%%%%%%%%%%%%%%%%%%%%%%%%%%%%
%%%%%%%%%%%%%%%%%%%%%%%%%%%%%%%%%%%%%

\section{Main Results of This Work}\label{sec:main-results}
The main results of this work are \Cref{thm:sobolev}, 
\Cref{cor:besovspaces}, \Cref{thm:analytic} and \Cref{thm:noinverse},
and can be summarized as follows.

\begin{main result}[Direct Embedding for Sobolev Spaces]\label{thm:introembed}
Let $\Wkp[r]$ denote the
Sobolev space of $r\in\N$ times weakly differentiable,
$p$-integrable functions
and
$\Bsqp[\alpha]$ the Besov space of smoothness $\alpha>0$,
with primary parameter $p$
and secondary parameter $q$.
Then, for $S=\P_m$ with a fixed $m\in\N_0$,
we show that 
for $1\leq p\leq\infty$ and any $r\in\N$
    \begin{align*}
        W^{r,p} \hookrightarrow
        N^{2r}_\infty(\Lp),\quad
        W^{r,p} \hookrightarrow
        C^{r}_\infty(\Lp)\hookrightarrow S^{r}_\infty(\Lp),
    \end{align*}
    and for $0<q\leq\infty$ and any $\alpha>0$
    \begin{align*}
        \Bsqp[\alpha]\hookrightarrow N^{2\alpha}_q(\Lp),\quad
        \Bsqp[\alpha]\hookrightarrow
        C^{\alpha}_q(\Lp)\hookrightarrow        
        S^{\alpha}_q(\Lp).
    \end{align*}
    
    Note that for $p=q$ and non-integer $\alpha>0$,
    $\Bsqp[\alpha][p][p]=W^{\alpha,p}$ is the fractional Sobolev space.
    Moreover, these results
    can also be extended to the range $0<p<1$,
    see \Cref{rem:extendp}.
    \end{main result}

\begin{main result}[Direct Embedding for Besov Spaces]\label{thm:introembedbesov}
Let $\Bsqp[\alpha]$ denote the Besov space of smoothness
$\alpha>0$,
with primary parameter $p$ and secondary parameter $q$.
Then, for $S=\P_m$ with a fixed $m\in\N_0$,
we show that
for any $1\leq p<\infty$,
any $0<\tau<p$, any $r>1/\tau-1/p$ and any $0<\bar r<r$,
    \begin{align*}
            \Bsqp[r][\sobolev][\sobolev]
            \hookrightarrow N^{\bar r}_\infty(\Lp)
            \hookrightarrow C^{\bar r/2}_\infty(\Lp),\quad
            \Bsqp[r][\sobolev][\sobolev]
            \hookrightarrow S^{\bar r}_\infty(\Lp),
    \end{align*}    
    and for any $0<q\leq\infty$, any $0<\alpha<\bar r$
    \begin{align*}
       (\Lp, \Bsqp[r][\sobolev][\sobolev])_{\alpha/\bar r,q}
       \hookrightarrow
       N_q^{\alpha}(\Lp)
       \hookrightarrow
       C_q^{\alpha/2}(\Lp),\quad
        (\Lp, \Bsqp[r][\sobolev][\sobolev])_{\alpha/\bar r,q}   
       \hookrightarrow
       S_q^{\alpha}(\Lp),
    \end{align*}
    where $(X,Y)_{\theta,q}$, $0<\theta<1$ is the real $K$-interpolation
    space between $X$ and $Y\hookrightarrow X$.   
    \end{main result}

\begin{remark}
    Note that both \Cref{thm:introembed} and \Cref{thm:introembedbesov}
    apply to Besov spaces. The Besov spaces in
    \Cref{thm:introembed} are of the type $\Bsqp$, where $p$ is the same for
    the error measure. Such Besov spaces
    are captured by linear approximation and for $p\geq 1$ these
    are equal to or are very close to Sobolev spaces.
    
    On the other hand, the Besov spaces $\Bsqp[\alpha][\sobolev][\sobolev]$
    for $1/\sobolev=\alpha+1/p$ are much larger and these correspond
    to the critical embedding line, see also \Cref{fig:DeVore}
    of \Cref{sec:dirinv}.
    These Besov spaces can only be captured by non-linear approximation.
    Our results require $\alpha>1/\sobolev-1/p$, i.e., Besov spaces that
    are strictly
    above the critical line.
\end{remark}

 \begin{main result}[No Inverse Embedding]
   Let $\Bsqp[\alpha]$ denote the Besov space of smoothness $\alpha$,
with primary parameter $p$ and secondary parameter $q$.
We show that for any $\alpha>0$, $0<p,q\leq\infty$,
    and any $\tilde{\alpha}>0$,
    \begin{align*}
        C^\alpha_q(\Lp)\not\hookrightarrow\Bsqp[\tilde{\alpha}].
    \end{align*}
\end{main result}

\begin{main result}[Spectral Approximation]\label{thm:introanalytic}
For $S=\P_m$ with a fixed $m\in\N_0$,
we show that if $f$ is analytic on an open domain containing $[0,1]$,  
    \begin{align*}
      E_n^{\mc N}(f)_\infty&\leq C\rho^{-n^{1/2}},\\
      E_n^{\mc S}(f)_\infty&\leq E_n^{\mc C}(f)_\infty\leq C\rho^{-n^{1/3}},
    \end{align*}
    for constants $C,\rho>1$. This can be extended
     to analytic functions with singularities using ideas from
     \cite{Kazeev2017}.
\end{main result}

In words:
\begin{itemize}
    \item    For the approximation tools $\tool^{\mc C}$ and $\tool^{\mc S}$
                (of fixed polynomial degree $m \in \N_0$), we obtain
                optimal approximation rates for Sobolev spaces
                $W^{r,p}$ of \emph{any} order $r>0$.
                
    \item     For the approximation tool $\tool^{\mc N}$,
                we obtain twice the optimal rate.
                Note, however, that the corresponding complexity measure
                only reflects the number of neurons in a corresponding
                tensor network. It does not reflect the representation or
                computational complexity.
                Moreover, from \cite{DeVore1989}
                we know that an optimal approximation tool with \emph{continuous} parametrization
                 for the Sobolev space $W^{r,p}$
                cannot exceed the rate $r$,
                see also \cite{yarotsky2018optimal}.
    \item    For the approximation tools $\tool^{\mc N}$ and
               $\tool^{\mc S}$, we obtain
               \emph{near to} optimal
               rates\footnote{I.e., the approximation rates are arbitrary close to optimal.} for the Besov space $B^r_{\tau,\tau}$,
               for any order $r>0$.
               For $\tool^{\mc C}$, the approximation rate is
               near to half the optimal rate.
    \item    Particularly the tool $\tool^{\mc S}$ is interesting, as it
                corresponds to deep, sparsely connected networks. The above
                results imply that deep, sparsely connected tensor networks
                can optimally replicate both $h$-uniform and
                $h$-adaptive approximation
                of any order.
    \item    All approximation tools achieve exponential approximation rates
                for analytic
                target functions. Together with the previous result,
                this implies that deep, sparsely connected tensor networks can
                optimally replicate $hp$-adaptive 
                approximation\footnote{Even though the underlying polynomial
                degree of the tensor network remains fixed.}.
    \item    Finally, an arbitrary function from any of the three approximation
                classes possesses no Besov smoothness. We will also
                see in \Cref{prop:inverse} that this can be mainly attributed to the
                depth of the tensor network.
\end{itemize}

We restrict ourselves in this work to
approximation of functions on intervals in one dimension to
focus on the presentation of the basic concepts and avoid
the technical difficulties of general multi-dimensional domains.
However, the ideas can be in principle extended to any dimension
and we intend to do
so in a part III of this work.

We base our approximation tool on the TT format.
Although some of our results would remain unchanged for other tree-based tensor formats, ranks are
generally affected by the choice of the format.
This is known for multi-dimensional approximation with tensor formats,
see, e.g., \cite{Buczynska2015, Buczynska2020}.
In the multi-dimensional case ranks remain low if the format ``fits'' the problem at hand, in a certain sense.
E.g., if the format mimics the interaction structure dictated by the differential operator, see \cite{Ali2019}.
In the context of tensorized 1D approximation, the tensor format would have to fit the self-similarity, periodicity or other algebraic
features of the target function,
see \cite{Grasedyck2010, Oseledets2012, Guilmard2018}.

We thus stress the following point concerning the approximation
power of tree-based tensor networks:
on one hand, when comparing approximation classes of different tensor networks to
spaces of classical smoothness -- the distinction between different tree-based formats seems insignificant. On the other hand,
when comparing approximation classes of different tensor networks
\emph{to each other} -- we expect these to be substantially different. 

%%%%%%%%%%%%%%%%%%%%%%%%%%%%%%%%%%%%%
%%%%%%%%%%%%%%%%%%%%%%%%%%%%%%%%%%%%%
%%%%%%%%%%%%%%%%%%%%%%%%%%%%%%%%%%%%%

% !TEX root = ../part2.tex
\section{Encoding Classical Approximation Tools}\label{sec:encoding}

In this section, we demonstrate how classical approximation tools can
be represented with tensor networks and bound the complexity of such
a representation.
Specifically, we consider representing fixed knot splines,
free knot splines, polynomials
and multi-resolution analysis (MRAs).
This will be the basis for Section \ref{sec:dirinv} where we use these complexity
estimates to prove embeddings of a scale of interpolation spaces into the approximation classes $N^\alpha_q(\Lp)$, $C^\alpha_q(\Lp)$
and $S^\alpha_q(\Lp)$.
Our background space is as before $\Lp$, where we specify the range
of $p$ where necessary.

%%%%%%%%%%%%%%%%%%%%%%%%%%%%%%%%%%%%%%%%%%%%%%%%%%%%%%%%%%%%%%%%%%%%%%%%%%%%%%%
%%%%%%%%%%%%%%%%%%%%%%%%%%%%%%%%%%%%%%%%%%%%%%%%%%%%%%%%%%%%%%%%%%%%%%%%%%%%%%%
%%%%%%%%%%%%%%%%%%%%%%%%%%%%%%%%%%%%%%%%%%%%%%%%%%%%%%%%%%%%%%%%%%%%%%%%%%%%%%%

\subsection{Polynomials}
Let us first consider the encoding of a polynomial of degree $\bar m$ in $\Vbd{\bar m}$. 
\begin{lemma}[Ranks for Polynomials]\label{ranks-polynomials}
Let $\varphi\in \P_{\m}$, $\m\in \N$. For any $d\in \N$, $\varphi\in\Vbd{\m}$ and for $1\leq \nu\leq d$,
$$
r_{\nu,d}(\varphi) \le \min\{\m+1,b^\nu\}.
$$
\end{lemma}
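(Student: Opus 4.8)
The plan is to reduce the statement to \Cref{rank-bound-VbdS-closed-dilation}(i) by observing that the space $S=\P_{\m}$ is closed under $b$-adic dilation. First I would verify the closure property of \Cref{def:badicdil}: for any $f\in\P_{\m}$ and any $k\in\{0,\dots,b-1\}$, the function $y\mapsto f(b^{-1}(y+k))$ is obtained by applying an affine change of variable to a polynomial of degree at most $\m$, and hence is again a polynomial of degree at most $\m$. Thus $\P_{\m}$ is closed under $b$-adic dilation, with $\dim\P_{\m}=\m+1$.

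Given this, \Cref{rank-bound-VbdS-closed-dilation}(i) applies directly with $S=\P_{\m}$. Since $\varphi\in\P_{\m}=V_{b,0,\P_{\m}}$, the nesting $V_{b,0,\P_{\m}}\subset V_{b,d,\P_{\m}}$ (valid precisely because $\P_{\m}$ is dilation-closed) yields $\varphi\in\Vbd{\m}$ for every $d\in\N$, and the accompanying rank bound gives $r_{\nu,d}(\varphi)\le\min\{b^\nu,\dim\P_{\m}\}=\min\{b^\nu,\m+1\}$ for $1\le\nu\le d$, which is exactly the asserted inequality.

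Alternatively, and more self-containedly, I could argue directly from \Cref{link-ranks-d-nu}, which identifies $r_{\nu,d}(\varphi)=r_{\nu,\nu}(\varphi)$ with $\dim\mathrm{span}\{\varphi(b^{-\nu}(j+\cdot)) : 0\le j\le b^\nu-1\}$. By the same affine-change-of-variable observation, each partial evaluation $\varphi(b^{-\nu}(j+\cdot))$ is a polynomial of degree at most $\m$, so all $b^\nu$ of these functions lie in the single space $\P_{\m}$, which has dimension $\m+1$. The span of $b^\nu$ vectors drawn from a space of dimension $\m+1$ has dimension at most $\min\{b^\nu,\m+1\}$, giving the bound.

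There is no genuine obstacle here; the only point requiring (trivial) care is confirming that composing a degree-$\m$ polynomial with the affine map $y\mapsto b^{-\nu}(j+y)$ does not raise the degree, so that all partial evaluations share the common finite-dimensional ambient space $\P_{\m}$. Everything else follows immediately from the rank lemmas established earlier in \Cref{sec:recalling}.
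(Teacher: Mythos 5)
Your proof is correct and takes essentially the same route as the paper: the paper's own proof is precisely the one-line observation that $\P_{\m}$ is closed under $b$-adic dilation, so \Cref{rank-bound-VbdS-closed-dilation}(i) applies with $\dim\P_{\m}=\m+1$. Your explicit verification of the dilation-closure property and your alternative argument via \Cref{link-ranks-d-nu} are both sound, but they only spell out details the paper leaves implicit.
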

\begin{proof}
Since $\P_{\m}$ is closed under $b$-adic dilation, the result simply follows from 
\Cref{rank-bound-VbdS-closed-dilation} (i).
\end{proof}
Now we consider the representation of a function 
 $\varphi\in\P_{\m}$ as a tensor in $\mathbf{V}_{b,d,m}$ with $m\neq \bar m$.
An exact representation is possible only if $\m\leq m$.
Otherwise we have to settle for an approximation.
In this section, we consider a particular type of approximation based on local interpolations 
 that we will used in the next section.
\begin{definition}[Local Interpolation]\label{local-interpolation}
We consider an interpolation operator $\mathcal{I}_{m}$ from  $L^p([0,1))$ to $S:=\P_{m}$, $1\le p\le \infty$, such that for all $v\in W^{m+1,p}$
and all $l=0,\ldots,m+1$, it holds
\begin{equation}\label{property-local-interpolation}
    \snorm{v - \mathcal{I}_{m} v}[{\Wkp[l][@]}]\le C \snorm{v}[W^{m+1,p}]
\end{equation}
for some constant $C>0$ independent of $v$.
For the construction of this operator and a proof of the above
property
see, e.g., \cite[Theorem 1.103]{ern2004}.
Then, we introduce the operator $\mathcal{I}_{b,d,m} :=  \mathcal{I}_{b, d,S} $ from  $L^p([0,1))$ to $V_{b,d,m}$ defined by \eqref{local-projection}  with $ \mathcal{I}_S=\mathcal{I}_{m}$.
\end{definition}
%\begin{definition}[Piece-wise Interpolant]
%    Let $b,\,d\in\N$ and consider a sequence of $b$-adic knots
%    \begin{align*}
%        x_k^b:=kb^{-d},\quad k=0,\ldots,b^d=:N.
%    \end{align*}
%    For each $0\leq k<b^d$, we fix $m+1$
%    auxiliary equidistant points
%    \begin{align*}
%        x_k= \hat{x}_0^k<\hat{x}_1^k\ldots<\hat{x}_{m-1}^k <\hat{x}_{m}^k < x_{k+1},
%    \end{align*}
%   \alert{ with 
%    $$
%    \hat{x}_l^k = x_k + b^{-d} y_l , \quad y_l \in [0,1), \quad   0\le l \le m.
%    $$}
%    We define the piece-wise
%    interpolant of $\varphi \in \C{([0,1))}$ corresponding to the given choice of
%    points as the unique piece-wise polynomial
%    $\hat{\varphi}\in\S{N}{m}{0}$ that satisfies
%    \begin{align*}
%        \hat{\varphi}(\hat{x}_i^k)=\varphi(\hat{x}_i^k),
%    \end{align*}
%    for all $i=0,\ldots,m$ and $k=0,\ldots,b^d-1$.
%    This defines a linear interpolation map
%    \begin{align*}
%        \interpol[b,d][m]:\C{([0,1))}\rightarrow\S{N}{m}{0},\quad\interpol[b,d][m](\varphi)
%        :=\hat{\varphi},
%    \end{align*}
%    which we refer to as the \emph{piece-wise interpolation} map.
%\end{definition}
\begin{lemma}[Ranks of Interpolants of Polynomials]\label{thm:rankspoly}
    For $\varphi\in\P_{\m}$, $\bar m \in \N$,  the interpolant satisfies
    $\interpol[b,d][m](\varphi)\in\Vbd{m}$ and for $1 \le \nu \le d$, 
    \begin{align}\label{eq:boundpoly}
        r_{\nu,d}(\interpol[b,d][m](\varphi))\leq\min\set{b^\nu,\,(m+1)b^{d-\nu},\,\m+1}.
    \end{align}
\end{lemma}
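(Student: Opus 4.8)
The plan is to prove the bound as a minimum of three quantities, each of which comes from a separate, already-established fact, and then simply intersect them. The membership $\interpol[b,d][m](\varphi)\in\Vbd{m}$ is immediate: by \Cref{local-interpolation} the operator $\interpol[b,d][m]=\mathcal{I}_{b,d,S}$ with $S=\P_m$ maps $\Lp([0,1))$ into $V_{b,d,m}$ by construction. So the entire content is the three rank bounds $b^\nu$, $(m+1)b^{d-\nu}$ and $\m+1$, holding simultaneously for each $1\le\nu\le d$.

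For the first two bounds I would not use anything about the target being a polynomial, only that the interpolant lands in $V_{b,d,m}$. Since $\P_m$ is closed under $b$-adic dilation and $\dim\P_m=m+1$, I would apply \Cref{rank-bound-VbdS-closed-dilation}(ii) to $\interpol[b,d][m](\varphi)\in V_{b,d,m}$ with $\bar d=d$, which directly yields $r_{\nu,d}(\interpol[b,d][m](\varphi))\le\min\{b^\nu,\,b^{d-\nu}(m+1)\}$ for $1\le\nu\le d$. Equivalently, one argues by the symmetry $r_{\{1,\dots,\nu\}}=r_{\{\nu+1,\dots,d+1\}}$ of the $\beta$-rank and bounds the latter by $\dim\mathbf{V}_{\{\nu+1,\dots,d+1\}}=b^{d-\nu}(m+1)$, while $r_{\{1,\dots,\nu\}}\le\dim\mathbf{V}_{\{1,\dots,\nu\}}=b^\nu$. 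This accounts for two of the three terms.

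The third bound, $\m+1$, is where the hypothesis $\varphi\in\P_{\m}$ enters. I would first observe that $\mathcal{I}_m$ is in fact a linear projection onto $\P_m$: it maps into $\P_m$ by definition, and it reproduces $\P_m$, because for $v\in\P_m$ one has $\snorm{v}[W^{m+1,p}]=0$, so \eqref{property-local-interpolation} (with $l=0$) forces $v-\mathcal{I}_m v=0$; hence $\mathcal{I}_m$ is idempotent. This is the one small step that genuinely needs checking, and it is exactly what licenses the use of \Cref{local-projection-ranks}, which states that local projection does not increase ranks, so $r_{\nu,d}(\interpol[b,d][m](\varphi))\le r_{\nu,d}(\varphi)$. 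Finally, since $\varphi\in\P_{\m}$, \Cref{ranks-polynomials} gives $r_{\nu,d}(\varphi)\le\min\{\m+1,b^\nu\}\le\m+1$.

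Intersecting the three estimates gives $r_{\nu,d}(\interpol[b,d][m](\varphi))\le\min\{b^\nu,\,(m+1)b^{d-\nu},\,\m+1\}$, as claimed. There is no real obstacle here beyond bookkeeping: the proof is an assembly of \Cref{rank-bound-VbdS-closed-dilation}, \Cref{local-projection-ranks} and \Cref{ranks-polynomials}, and the only point requiring a short verification is that the interpolation operator $\mathcal{I}_m$ is a genuine projection, so that the rank-nonincrease lemma applies.
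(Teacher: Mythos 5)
Your proof is correct and follows essentially the same route as the paper's: both obtain the bounds $b^\nu$ and $(m+1)b^{d-\nu}$ from \Cref{rank-bound-VbdS-closed-dilation}(ii) applied to the interpolant in $\Vbd{m}$, and the bound $\m+1$ by combining the rank-nonincrease property of \Cref{local-projection-ranks} with \Cref{ranks-polynomials}. Your explicit check that $\mathcal{I}_m$ reproduces $\P_m$ (via \eqref{property-local-interpolation} with $l=0$ and $\snorm{v}[W^{m+1,p}]=0$), so that it is a genuine projection and \Cref{local-projection-ranks} applies, is a detail the paper leaves implicit, but it is not a different argument.
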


\begin{proof}
Since $\interpol[b,d][m](\varphi)\in\Vbd{m}$, the  bound $r_{\nu,d}(\varphi)\leq\min\set{b^\nu,\,(m+1)b^{d-\nu}}$ is obtained from  \Cref{rank-bound-VbdS-closed-dilation} (ii). Then from \Cref{local-projection-ranks}, we know that 
$r_{\nu,  d}(\interpol[b,  d][m](\varphi)) \le r_{\nu,  d}( \varphi)  $ for all $1\le \nu \le   d$, and we conclude by using \Cref{ranks-polynomials}.\end{proof}
 From \Cref{thm:rankspoly}, we easily deduce
 \begin{proposition}[Complexity for Encoding Interpolants of Polynomials]\label{prop:encoding-poly-interpolant}
 For a polynomial $\varphi \in\P_{\m}$, $\bar m \in \N$, 
     the different complexities from \eqref{eq:complex} for
    encoding the interpolant $\interpol[b,d][m](\varphi)$ of level $d$ and degree $m\le \bar m$ within $\Vb{m}$
    are bounded as
    \begin{align*}
        \cost_{\mc N}(\interpol[b,  d][m](\varphi))&\leq (\bar m+1) d,\\
        \cost_{\mc S}(\interpol[b,  d][m](\varphi))&\leq \cost_{\mc C}(\interpol[b,  d][m](\varphi))\leq b(\bar m +1)^2 d + b(m+1).
    \end{align*}
\end{proposition}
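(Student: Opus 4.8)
The plan is to reduce everything to the rank bound already established in \Cref{thm:rankspoly} and then read each complexity off its definition in \eqref{eq:complex}. The three measures $\cost_{\mc N}$, $\cost_{\mc C}$, $\cost_{\mc S}$ are monotone in the TT-ranks $(r_\nu)_{\nu=1}^d$ of the chosen representation and in $\dim S$, so it suffices to evaluate them on the minimal-rank representation of $\interpol[b,d][m](\varphi)$; since $\cost(\cdot)$ is defined as an infimum over representations, any single representation furnishes an upper bound. Writing $r_\nu := r_{\nu,d}(\interpol[b,d][m](\varphi))$ and recalling that $S = \P_m$ gives $\dim S = m+1$, \Cref{thm:rankspoly} yields $r_\nu \le \min\{b^\nu,\,(m+1)b^{d-\nu},\,\bar m+1\} \le \bar m+1$ for every $1\le\nu\le d$. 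This uniform inequality $r_\nu \le \bar m+1$ drives all three estimates.

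For $\cost_{\mc N}$ the claim is immediate: $\cost_{\mc N} = \sum_{\nu=1}^d r_\nu \le (\bar m+1)d$. For $\cost_{\mc C} = br_1 + b\sum_{k=2}^d r_{k-1}r_k + r_d(m+1)$ I would bound the $d-1$ interior products $r_{k-1}r_k$ by $(\bar m+1)^2$, giving $b(d-1)(\bar m+1)^2$, and treat the two boundary contributions separately as $br_1 \le b(\bar m+1)$ and $r_d(m+1) \le (\bar m+1)(m+1)$. Rewriting $b(d-1)(\bar m+1)^2 = b(\bar m+1)^2 d - b(\bar m+1)^2$ and collecting, the leftover equals $(\bar m+1)\big[(m+1) - b\bar m\big]$, which is nonpositive because $m+1 \le b\bar m$ for all $0\le m\le \bar m$ with $b\ge 2$ and $\bar m\ge 1$. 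Hence $\cost_{\mc C} \le b(\bar m+1)^2 d$, which in particular implies the stated (slightly looser) bound $b(\bar m+1)^2 d + b(m+1)$. The only delicate point is precisely this boundary-core bookkeeping: one must not bound the first core $br_1$ as crudely as an interior product, since doing so would exhaust the $d$-linear budget and leave no slack to absorb the last core $v_{d+1}\in\R^{r_d\times(m+1)}$ when $\bar m+1 > b$; keeping $br_1 \le b(\bar m+1)$ and using $m+1\le b\bar m$ is what makes the count close.

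Finally, the inequality $\cost_{\mc S} \le \cost_{\mc C}$ holds for any fixed tensor network, because the number of nonzero entries $\Vert v_\nu\Vert_{\ell_0}$ of each core never exceeds its total number of entries, and $\cost_{\mc C}$ is exactly that total number of entries summed over the cores; evaluating both sides at the representation minimizing $\cost_{\mc C}$ then transfers the inequality to the function complexities. With that observation, the second displayed bound follows from the $\cost_{\mc C}$ estimate of the previous paragraph, completing the proof.
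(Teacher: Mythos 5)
Your proof is correct and takes essentially the same approach as the paper, which omits the details entirely ("From \Cref{thm:rankspoly}, we easily deduce\dots") and simply reads the bounds off the rank estimate $r_{\nu,d}(\interpol[b,d][m](\varphi))\le \bar m+1$ together with the definitions in \eqref{eq:complex}. Your boundary-versus-interior bookkeeping for $\cost_{\mc C}$ (keeping $br_1\le b(\bar m+1)$ and using $m+1\le b\bar m$) actually yields the slightly sharper bound $b(\bar m+1)^2 d$, and your entry-count argument for $\cost_{\mc S}\le\cost_{\mc C}$ is exactly the standard one used throughout the paper.
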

	
\subsection{Fixed Knot Splines}

Let $b,d\in\N$. We divide $[0,1)$ into $N = b^d$ intervals $[x_k, x_{k+1})$
with
\begin{align*}
    x_k:=kb^{-d},\quad k=0,\ldots,b^d.
\end{align*}
Fix a polynomial of degree $m\in\N_0$ and
a continuity index $\cont\in\N_0\cup\set{-1,\infty}$.
Define the space of fixed knot splines of degree $m$ 
with $N+1$ knots and $\cont$ continuous derivatives as
\begin{align*}
    \S{N}{m}{\cont}:=\set{f:[0,1)\rightarrow\R \;: \; f_{|_{[x_k,x_{k+1})}}\in\P_m,\;k=0,\ldots,N-1\text{ and }
    f\in\C[\cont]{([0,1))}},
\end{align*}
where $\C[-1]{([0,1))}$ stands for not necessarily continuous functions on $[0,1)$,
$\C[0]{([0,1))}$ is the space $\C{([0,1))}$ of continuous functions on $[0,1)$
and $\C[k]{([0,1))}$, $k\in\N\cup\set{\infty}$ is the usual space of
$k$-times differentiable functions.
The following property is apparent.

\begin{lemma}[Dimension of Spline Space]\label{lemma:dimsplinespace}
      $\S{N}{m}{\cont}$ is a finite-dimensional subspace of $\Lp$ with
      \begin{align*}
          \dim\S{N}{m}{\cont}=
          \begin{cases}
              (m+1)N-(N-1)(\cont+1),&\quad -1\leq \cont\leq m,\\
              m+1,&\quad m+1\leq \cont\leq\infty. 
          \end{cases}
      \end{align*}
\end{lemma}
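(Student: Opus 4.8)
The plan is to realize $\S{N}{m}{\cont}$ as the kernel of the continuity constraints inside the larger, \emph{unconstrained} space of piecewise polynomials, and to compute its dimension by rank--nullity. First I would set $\tilde S := \S{N}{m}{-1}$, the space of functions that are polynomials of degree at most $m$ on each of the $N$ pieces $[x_k,x_{k+1})$ with no matching imposed across knots; since the pieces are independent and each carries $m+1$ free coefficients, $\dim \tilde S = (m+1)N$. For each interior knot $x_k$, $1\le k\le N-1$, and each order $j\in\N_0$, I define on $\tilde S$ the jump functional $J_{k,j}(f):=f^{(j)}(x_k^+)-f^{(j)}(x_k^-)$, using that the one-sided derivatives of a piecewise polynomial exist. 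Membership $f\in\S{N}{m}{\cont}$ is exactly the vanishing of $J_{k,j}(f)$ for all $1\le k\le N-1$ and all $0\le j\le\cont$, so $\S{N}{m}{\cont}=\ker L$ with $L$ the operator collecting these functionals; rank--nullity then gives $\dim\S{N}{m}{\cont}=(m+1)N-\operatorname{rank}L$.

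For the regime $-1\le\cont\le m$ the decisive step is to show that $L$ is surjective onto $\R^{(N-1)(\cont+1)}$, equivalently that the $(N-1)(\cont+1)$ continuity conditions are linearly independent. I would exhibit this surjectivity with truncated powers: for $0\le j\le\cont\le m$ the function $(x-x_k)_+^j$ lies in $\tilde S$ (it is $0$ on the pieces left of $x_k$ and the degree-$j$ polynomial $(x-x_k)^j$ on those to the right, and $j\le m$), is smooth at every knot other than $x_k$, and at $x_k$ satisfies $J_{k,j'}\big((x-x_k)_+^j\big)=j!$ if $j'=j$ and $=0$ otherwise (for $0\le j'\le\cont$). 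Hence, ordering the $(N-1)(\cont+1)$ functions $(x-x_k)_+^j$ by the pair $(k,j)$, the matrix of $L$ on their linear span is diagonal with nonzero diagonal entries $j!$, so $L$ is onto and $\operatorname{rank}L=(N-1)(\cont+1)$. This yields $\dim\S{N}{m}{\cont}=(m+1)N-(N-1)(\cont+1)$.

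For the regime $m+1\le\cont\le\infty$ I would first observe that every $f\in\tilde S$ has $f^{(j)}\equiv 0$ on each piece for $j>m$, so the conditions $J_{k,j}=0$ with $j>m$ are automatically satisfied and only the orders $0\le j\le m$ are binding; these are precisely the constraints defining $\S{N}{m}{m}$. Requiring $f^{(j)}(x_k^+)=f^{(j)}(x_k^-)$ for all $0\le j\le m$ matches the full Taylor data at $x_k$ of the two adjacent degree-$m$ pieces and therefore forces them to be the same polynomial; iterating over $k=1,\dots,N-1$ shows that $f$ is a single global element of $\P_m$, so $\S{N}{m}{\cont}=\P_m$ has dimension $m+1$ (which also agrees with setting $\cont=m$ in the first formula). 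The only genuine obstacle is the independence claim for $\cont\le m$: the whole count rests on producing, for each knot and each admissible order, a piecewise polynomial of degree at most $m$ that activates exactly one continuity condition, and this is exactly where the diagonal structure of the truncated-power jumps together with the degree budget $\cont\le m$ enters.
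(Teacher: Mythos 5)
Your proof is correct and complete. Note, however, that the paper offers no proof of this lemma at all: it is introduced with ``The following property is apparent,'' i.e.\ the authors treat the dimension count as classical spline theory (it is standard, see e.g.\ Schumaker's book on splines). What you have written is precisely the standard justification that the paper omits: realize $\S{N}{m}{\cont}$ as the kernel of the jump operator $L$ on the unconstrained space $\S{N}{m}{-1}$ of dimension $(m+1)N$, and compute $\operatorname{rank} L$ by exhibiting the truncated powers $(x-x_k)_+^j$, $0\le j\le\cont$, $1\le k\le N-1$, whose jump vectors form a diagonal (hence invertible) system -- this is exactly where the hypothesis $\cont\le m$ is needed, since otherwise $(x-x_k)_+^j$ would leave the degree budget. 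Your treatment of the regime $m+1\le\cont\le\infty$ via matching of full Taylor data, forcing all pieces to coincide with a single element of $\P_m$, is also the standard argument, and it correctly disposes of the (formally infinite) family of constraints by observing that jumps of order $j>m$ vanish identically on piecewise polynomials of degree $\le m$. The only step you use implicitly that deserves the one sentence you gave it is the equivalence, for piecewise polynomials, between membership in $\C[\cont]{([0,1))}$ and the vanishing of the one-sided derivative jumps at the interior knots; this is a routine mean-value-theorem argument and your appeal to it is fine. In short: the paper buys brevity by citation-free assertion, and your proposal supplies a self-contained proof that would serve as a correct appendix to it.
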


With the above Lemma we immediately obtain

\begin{lemma}[Ranks of Fixed Knot Splines]\label{thm:fixedknot}
    Let $\varphi\in\S{N}{m}{\cont}$.
    Then, $\varphi\in\Vbd{m}$ and for $1\leq \nu\leq d$
    \begin{align}\label{eq:ranksfixedsplines}
        r_{\nu,d}(\varphi)\leq
        \begin{cases}
                \min\set{(m-\cont)b^{d-\nu}+(\cont+1),\;b^\nu},&\quad
                -1\leq \cont\leq m,\\
                \min\set{m+1,\;b^\nu},&\quad m+1\leq\cont\leq\infty.
        \end{cases}
    \end{align}
\end{lemma}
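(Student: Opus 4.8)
The plan is to reduce the whole statement to a dimension count of spline spaces, using the partial-evaluation description of ranks from \Cref{link-ranks-d-nu}. First I would dispatch the membership claim $\varphi\in\Vbd{m}$. Since $N=b^d$, the breakpoints $x_k=kb^{-d}$ of the spline coincide exactly with the level-$d$ $b$-adic grid, so on each interval $[b^{-d}i,b^{-d}(i+1))$ the function $\varphi$ agrees with a polynomial of degree $m$. As recorded after \Cref{thm:tensorizationmap}, the partial evaluation $\tensor{\varphi}(i_1,\ldots,i_d,\cdot)$ equals the rescaled restriction $\varphi(b^{-d}(i+\cdot))$, which is again a polynomial of degree $m$ because affine reparametrization preserves polynomial degree. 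Hence $\tensor{\varphi}\in(\R^{\Ib})^{\otimes d}\otimes\P_m=\mathbf{V}_{b,d,m}$, i.e.\ $\varphi\in\Vbd{m}$.

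Next I would invoke \Cref{link-ranks-d-nu} to rewrite, for each $1\le\nu\le d$,
$$r_{\nu,d}(\varphi)=r_{\nu,\nu}(\varphi)=\dim\mathrm{span}\{\varphi(b^{-\nu}(j+\cdot)):0\le j\le b^\nu-1\}.$$
This span is generated by at most $b^\nu$ functions, which yields $r_{\nu,d}(\varphi)\le b^\nu$ for free and supplies the second entry of each minimum in \eqref{eq:ranksfixedsplines}. For the first entry I would show that every generator $\varphi(b^{-\nu}(j+\cdot))$ lies in one fixed spline space of size $b^{d-\nu}$: the restriction of $\varphi$ to $[b^{-\nu}j,b^{-\nu}(j+1))$, rescaled to $[0,1)$, is a piecewise polynomial of degree $m$ with $b^{d-\nu}$ equal pieces (the level-$d$ grid points falling inside the level-$\nu$ cell), and it inherits $\C[\cont]$ smoothness, so each generator belongs to $\S{b^{d-\nu}}{m}{\cont}$. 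Consequently $r_{\nu,\nu}(\varphi)\le\dim\S{b^{d-\nu}}{m}{\cont}$, and substituting $N=b^{d-\nu}$ into \Cref{lemma:dimsplinespace} gives, for $-1\le\cont\le m$,
$$\dim\S{b^{d-\nu}}{m}{\cont}=(m+1)b^{d-\nu}-(b^{d-\nu}-1)(\cont+1)=(m-\cont)b^{d-\nu}+(\cont+1),$$
while $\dim\S{b^{d-\nu}}{m}{\cont}=m+1$ for $m+1\le\cont\le\infty$. Combining with the trivial $b^\nu$ bound reproduces exactly both cases of \eqref{eq:ranksfixedsplines}.

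The only genuinely delicate point is the smoothness inheritance in the third step: I must be certain that restricting a globally $\C[\cont]$ spline to a level-$\nu$ cell and rescaling produces an element of $\S{b^{d-\nu}}{m}{\cont}$ and not merely of the discontinuous space $\S{b^{d-\nu}}{m}{-1}$. This holds because the interior knots of the rescaled piece form a subset of the original interior knots, at which $\varphi$ is $\C[\cont]$ by hypothesis, and affine rescaling preserves the order of differentiability. Everything else is a direct substitution into the dimension formula of \Cref{lemma:dimsplinespace}, so no additional estimates are required; indeed the statement follows essentially immediately once this membership in the smaller spline space is pinned down.
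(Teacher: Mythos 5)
Your proposal is correct and follows essentially the same route as the paper's proof: both identify the rescaled restrictions $\varphi(b^{-\nu}(j+\cdot))$ as elements of $\S{b^{d-\nu}}{m}{\cont}$, apply \Cref{link-ranks-d-nu}, and conclude with the dimension count of \Cref{lemma:dimsplinespace}. The only (harmless) deviations are that you obtain the $b^\nu$ bound directly from the span having at most $b^\nu$ generators rather than via \Cref{rank-bound-VbdS-closed-dilation}, and that you spell out the membership $\varphi\in\Vbd{m}$, which the paper leaves implicit.
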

\begin{proof}
For any $0\le j < b^\nu$, the restriction of $\varphi $ to the interval $[b^{-\nu}j,b^{-\nu}(j+1))$ is a piece-wise polynomial in $\C[\cont]{([b^{-\nu}j,b^{-\nu}(j+1)))}$ with $b^{d-\nu}$ pieces, so that  $\varphi(b^{-\nu}(j+\cdot)) \in \S{b^{d-\nu}}{m}{\cont}$ (with knots $ kb^{-\nu}$, $0\le k<b^{\nu} $).  \Cref{link-ranks-d-nu} then implies 
$r_{\nu,d}(f) \le \dim(\S{b^{d-j}}{m}{\cont}) $ and  we obtain
    \eqref{eq:ranksfixedsplines} by using \Cref{lemma:dimsplinespace} and \Cref{rank-bound-VbdS-closed-dilation}.
\end{proof}

\begin{remark}[General Tensor Formats]\label{remark:htsplines}
    We could generalize the above statement to a general tree-based tensor format.
    In this case, for $\beta\subset\set{1,\ldots,d}$ we would have the bound (see
    also \cite[Lemma 2.26]{partI})
    \begin{align*}
         r_{\beta,d}(\varphi)\leq\min\set{(m+1)b^{d-\#\beta},\;b^{\#\beta}}.
    \end{align*}
    Note that $(T_{b,d} \varphi)(j_\beta,\cdot)$ is not necessarily
    a contiguous piece of $\varphi$, 
    even if $\beta$ is a contiguous subset of $\set{1,\ldots,d}$,
    e.g., $\beta=\set{j,j+1,\ldots,j+i}$.
    Therefore additional continuity constraints on
    $\varphi\in\S{N}{m}{\cont}$
    would in general not affect the rank bound.
    Of course, for large $d$ the rank reduction due to continuity constraints is not essential,
    unless $\cont=m$ and in this case the ranks would be
    bounded by $m+1$ in any format.
    See also \cite[Remark 3.3]{partI}.
\end{remark}
\begin{proposition}[Complexity for Encoding Fixed Knot Splines]\label{cor:fixedknot}
    For a fixed knot spline $\varphi\in\S{N}{m}{\cont}$ with $N = b^d$,
    the different complexities from \eqref{eq:complex} for 
    encoding within $\Vb{m}$
    are bounded as
    \begin{align*}
        \cost_{\mc N}(\varphi)&\leq C\sqrt{N},\\
        \cost_{\mc S}(\varphi)&\leq\cost_{\mc C}(\varphi)\leq CN,
    \end{align*}
    with constants $C>0$ depending only on $b$ and $m$.
\end{proposition}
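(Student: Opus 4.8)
The plan is to combine the rank bound from \Cref{thm:fixedknot} with the complexity definitions in \eqref{eq:complex}, essentially by summing the ranks over the levels $\nu = 1, \ldots, d$. Recall that $N = b^d$, so $d = \log_b N$ and $b^{d-\nu} = N/b^\nu$. The key observation is that the rank bound $r_{\nu,d}(\varphi) \le \min\{(m-\cont)b^{d-\nu} + (\cont+1),\, b^\nu\}$ behaves differently on the two halves of the range of $\nu$: for small $\nu$ the term $b^\nu$ dominates (the rank grows), while for large $\nu$ the term $(m-\cont)b^{d-\nu} + (\cont+1) \approx (m-\cont)N/b^\nu$ dominates (the rank decays). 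The crossover happens near $b^\nu \approx \sqrt{N}$, i.e.\ $\nu \approx d/2$, and this is precisely where the $\sqrt{N}$ behavior in $\cost_{\mc N}$ originates.

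First I would establish the bound on $\cost_{\mc N}(\varphi) = \sum_{\nu=1}^d r_\nu$. Splitting the sum at the crossover index $\nu^\ast \approx d/2$, I would bound the low-$\nu$ portion by $\sum_{\nu \le \nu^\ast} b^\nu$, which is a geometric sum dominated by its largest term $b^{\nu^\ast} \approx \sqrt{N}$, and the high-$\nu$ portion by $\sum_{\nu > \nu^\ast} \bigl((m-\cont)b^{d-\nu} + (\cont+1)\bigr)$, whose geometric part $\sum (m-\cont)b^{d-\nu}$ is again dominated by its largest term $\approx (m-\cont)\sqrt{N}$. The only subtlety is the constant term $(\cont+1)$ summed over the high range, which could contribute up to $(\cont+1)\cdot d/2 = O(\log N)$; since $\log N = o(\sqrt N)$ this is absorbed into the $C\sqrt N$ bound, with $C$ depending on $b$ and $m$ (note $\cont \le m$ in the relevant regime, so $\cont+1 \le m+1$ is controlled by $m$). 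The case $m+1 \le \cont \le \infty$ gives $r_{\nu,d} \le \min\{m+1, b^\nu\}$, whose sum is trivially bounded by $(m+1)d = O(\log N)$, hence also by $C\sqrt N$.

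Next I would bound $\cost_{\mc C}(\varphi) = br_1 + b\sum_{k=2}^d r_{k-1}r_k + r_d \dim S$. Here each summand is a product of two consecutive ranks, so I would use the crude uniform bound $r_{\nu,d}(\varphi) \le (m+1)b^{d-\nu}$ valid for all $\nu$ (obtained from \Cref{rank-bound-VbdS-closed-dilation} (ii), independent of the min with $b^\nu$), giving $r_{k-1}r_k \le (m+1)^2 b^{2d - 2k + 1}$. Summing the geometric series $\sum_k b^{-2k}$ (which converges, so is bounded by a constant times its largest term at $k=2$) yields $\sum_k r_{k-1}r_k \le C(m+1)^2 b^{2d} = C(m+1)^2 N^2$ — but this overshoots. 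Therefore the better route is to exploit both pieces of the min: writing $r_\nu \le \min\{b^\nu, (m+1)b^{d-\nu}\}$ and observing that the product $r_{k-1}r_k$ is controlled by taking $b^{k-1}$ for the earlier factor and $(m+1)b^{d-k}$ for the later factor when $k$ is near the crossover, one checks each consecutive product is $O(N)$ and that there are $O(d) = O(\log N)$ of them, but the dominant contribution again comes from terms near $\nu \approx d/2$ where the product peaks. Carefully, $b^{\nu-1}\cdot b^{\nu} \le b^{2\nu}$ peaks at $b^d = N$ for the low range and $(m+1)^2 b^{2(d-\nu)+1}$ peaks at $N$ for the high range, and summing the two geometric tails gives $\cost_{\mc C}(\varphi) \le CN$. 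The chain $\cost_{\mc S} \le \cost_{\mc C}$ is immediate since the number of non-zero entries never exceeds the total number of entries.

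The main obstacle I anticipate is handling the product terms $r_{k-1}r_k$ in $\cost_{\mc C}$ with the correct constant and confirming that the geometric sums indeed collapse to $O(N)$ rather than $O(N^2)$ — this requires using the two-sided bound $r_\nu \le \min\{b^\nu, (m+1)b^{d-\nu}\}$ rather than a single crude estimate, and verifying that the maximum of the product over the summation range is $O(N)$ while the sum has only logarithmically many terms, or equivalently that the geometric decay away from the peak keeps the total sum at the order of its peak value. Once this peaking argument is set up cleanly, both estimates follow by routine geometric-series bookkeeping, with all constants depending only on $b$ and $m$ as claimed.
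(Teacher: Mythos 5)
Your proposal is correct and follows essentially the same route as the paper's proof: split the sums at the crossover index $\nu \approx d/2$, use $r_\nu \le b^\nu$ on the low range and $r_\nu \lesssim b^{d-\nu}$ on the high range, and bound each piece by a geometric series dominated by its peak value ($\approx \sqrt{N}$ for $\cost_{\mc N}$, $\approx N$ for the products $r_{k-1}r_k$ in $\cost_{\mc C}$), finishing with $\cost_{\mc S} \le \cost_{\mc C}$. Your explicit treatment of the additive $(\cont+1)$ term and the $\cont \ge m+1$ case is slightly more careful bookkeeping than the paper's display, but the argument is the same.
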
 	
\begin{proof}
From \Cref{thm:fixedknot}, we obtain 
\begin{align*}
\cost_{\mathcal{N}}(\varphi) &= \sum_{\nu=1}^{\lfloor d/2 \rfloor} b^\nu +   \sum_{\nu=\lfloor d/2 \rfloor+1}^{d} b^{d-\nu}\le 2\frac{b}{b-1} b^{d/2} =  2\frac{b}{b-1} \sqrt{N},\\
\cost_{\mathcal{C}}(\varphi) &=\sum_{\nu=1}^{\lfloor d/2 \rfloor} b^{2\nu} +   \sum_{\nu=\lfloor d/2 \rfloor+1}^{d} b^{2(d-\nu+1)} + b(m+1) \\
&\le \frac{2b^2}{b^2-1} b^{d} +b(m+1)  = \max\{\frac{2b^2}{b^2-1},(m+1)\} N .
\end{align*}
and we conclude by noting that $\cost_{\mc S}(\varphi)\leq\cost_{\mc C}(\varphi)$.
\end{proof}
	
%%%%%%%%%%%%%%%%%%%%%%%%%%%%%%%%%%%%%%%%%%%%%%%%%%%%%%%%%%%%%%%%%%%%%%%%%%%%%%%
%\alert{
Now we would like to encode splines of degree $\bar m$ in $V_{b,\bar d,m}$ with $m \neq \bar m$ and $\bar d \ge d$. 
An exact representation is not possible for $\bar m>m$. Then, we again consider the local interpolation operator from \Cref{local-interpolation}. 
\begin{lemma}[Ranks of Interpolants of Fixed Knot Splines]\label{lem:boundrankssplines}
    Let $\varphi\in\S{N}{\bar m}{\cont}$ with $N = b^d$. For $\bar d \ge d$,  
    the interpolant 
    $\interpol[b,\bar d][m](\varphi) \in V_{b,\bar d , m}$ satisfies  
%    \begin{align*}
%    r_{\nu,\bar d}(\interpol[b,\bar d][m](\varphi)) \le r_{\nu,\bar d}( \varphi)  , \quad 1\le \nu\le \bar d,
%   \end{align*}
%   and 
      \begin{align*}
       & r_{\nu,\bar d}(\interpol[b,\bar d][m](\varphi))  \leq
        \begin{cases}
                \min\set{(\bar m-\cont)b^{d-\nu}+(\cont+1),\;b^\nu},&\quad
                -1\leq \cont\leq \bar m,\\
                \min\set{\bar m+1,\;b^\nu},&\quad \bar m+1\leq\cont\leq\infty.
        \end{cases}
       ,\quad  1\le \nu \le d,\\
      &  r_{\nu,\bar d}(\interpol[b,\bar d][m](\varphi))   \leq \min\set{(m+1)b^{\bar d-\nu},\,\m+1}, \quad d < \nu \le \bar d.
    \end{align*}
\end{lemma}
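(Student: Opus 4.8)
The plan is to treat the two index ranges $1\le \nu\le d$ and $d<\nu\le\bar d$ separately, and in each case to combine the rank non-increasing property of the local projection (\Cref{local-projection-ranks}) with rank bounds already available for the spline $\varphi$ and for the target space $V_{b,\bar d,m}$. The starting point is \Cref{thm:fixedknot} applied with degree $\bar m$: since $\varphi\in\S{N}{\bar m}{\cont}$ with $N=b^d$, it belongs to $V_{b,d,\bar m}$ and its ranks $r_{\nu,d}(\varphi)$, $1\le\nu\le d$, already satisfy exactly the two-case bound asserted in the range $1\le\nu\le d$. Because $\interpol[b,\bar d][m]$ is the local projection $\mathcal{I}_{b,\bar d,S}$ with $S=\P_m$, \Cref{local-projection-ranks} invoked at the fine level $\bar d$ gives $r_{\nu,\bar d}(\interpol[b,\bar d][m](\varphi))\le r_{\nu,\bar d}(\varphi)$ for all $1\le\nu\le\bar d$, so it suffices to control $r_{\nu,\bar d}(\varphi)$ and, in the second range, additionally to exploit that the interpolant lives in $V_{b,\bar d,m}$.

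For $1\le\nu\le d$ I would argue as follows. Since $\varphi\in V_{b,d,\bar m}$ and $\P_{\bar m}$ is closed under $b$-adic dilation, \Cref{rank-bound-VbdS-closed-dilation}~(ii) yields $r_{\nu,\bar d}(\varphi)=r_{\nu,d}(\varphi)$, and the right-hand side is controlled by \Cref{thm:fixedknot} as recalled above. Chaining these with the local-projection inequality gives the first asserted bound verbatim.

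For $d<\nu\le\bar d$ I would derive two independent estimates and take their minimum. On one hand, \Cref{rank-bound-VbdS-closed-dilation}~(ii) applied to $\varphi\in V_{b,d,\bar m}$ in the range $\nu>d$ gives $r_{\nu,\bar d}(\varphi)\le\min\{b^\nu,\dim\P_{\bar m}\}\le\bar m+1$, which transfers through \Cref{local-projection-ranks} to $r_{\nu,\bar d}(\interpol[b,\bar d][m](\varphi))\le\bar m+1$. On the other hand, $\interpol[b,\bar d][m](\varphi)\in V_{b,\bar d,m}$ is a (possibly discontinuous) piecewise polynomial of degree $m$ with $b^{\bar d}$ uniform pieces, i.e.\ an element of $\S{b^{\bar d}}{m}{-1}$; restricting it to a level-$\nu$ dyadic subinterval and rescaling leaves a spline in $\S{b^{\bar d-\nu}}{m}{-1}$, so \Cref{link-ranks-d-nu} together with \Cref{lemma:dimsplinespace} (with $\cont=-1$) gives $r_{\nu,\bar d}(\interpol[b,\bar d][m](\varphi))\le\dim\S{b^{\bar d-\nu}}{m}{-1}=(m+1)b^{\bar d-\nu}$. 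Taking the minimum produces the claimed bound $\min\{(m+1)b^{\bar d-\nu},\,\bar m+1\}$.

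The only points requiring care are bookkeeping points rather than genuine obstacles: one must make sure the local-projection inequality is applied at the fine level $\bar d$ and not $d$, and one must recognise that the two competing bounds in the range $\nu>d$ come from genuinely different sources — the inherited rank $\bar m+1$ of the original spline $\varphi$ versus the degree $m$ of the target space entering through the piecewise-polynomial structure of the interpolant. Beyond the dimension count of \Cref{lemma:dimsplinespace} and the repeated passage between levels $d$ and $\bar d$, I expect no substantial difficulty.
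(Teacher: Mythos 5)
Your proposal is correct and follows essentially the same route as the paper's proof: the local-projection rank inequality of \Cref{local-projection-ranks} at level $\bar d$, the identity $r_{\nu,\bar d}(\varphi)=r_{\nu,d}(\varphi)$ combined with \Cref{thm:fixedknot} for $\nu\le d$, and for $\nu>d$ the two bounds $\bar m+1$ (from \Cref{rank-bound-VbdS-closed-dilation} applied to $\varphi\in V_{b,d,\bar m}$) and $(m+1)b^{\bar d-\nu}$ (from $\interpol[b,\bar d][m](\varphi)\in V_{b,\bar d,m}$). Your derivation of the latter bound via \Cref{link-ranks-d-nu} and the dimension count of \Cref{lemma:dimsplinespace} merely spells out what the paper dismisses as ``simply follows,'' so there is no substantive difference.
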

\begin{proof}
From \Cref{local-projection-ranks}, we know that $r_{\nu,\bar d}(\interpol[b,\bar d][m](\varphi)) \le r_{\nu,\bar d}( \varphi)  $ for all $1\le \nu \le \bar d$. For $\nu\le d$, we have  from \Cref{link-ranks-d-nu} that $r_{\nu,\bar d}( \varphi) = r_{\nu,d}(\varphi)$. Then, we obtain the first inequality from  \Cref{thm:fixedknot}. Now consider the case $d < \nu \le \bar d$. 
The bound $ r_{\nu,\bar d}(\interpol[b,\bar d][m](\varphi)) \le (m+1)b^{\bar d-\nu}  $ simply follows from the fact that $\interpol[b,\bar d][m](\varphi) \in V_{b,\bar d,m}$. Since $\varphi \in V_{b,d,\bar m}$ and $\P_{\bar m}$ is closed under dilation, we obtain from 
  \Cref{rank-bound-VbdS-closed-dilation} the other bound $r_{\nu,\bar d}( \varphi) \le \bar m+1.$
\end{proof}
\begin{proposition}[Complexity for Encoding Interpolants of Fixed Knot Splines]\label{cor:fixedknotinterpolant}
    For a fixed knot spline $\varphi\in\S{N}{\m}{\cont}$ with $N = b^d$,
    the different complexities from \eqref{eq:complex} for
    encoding the interpolant $\interpol[b,\bar d][m](\varphi)$  of level $\bar d \ge d$ and degree $m\le \bar m$ within $\Vb{m}$
    are bounded as
    \begin{align*}
        \cost_{\mc N}(\interpol[b,\bar d][m](\varphi))&\leq C\sqrt{N} + C'(\bar d-d),\\
        \cost_{\mc S}(\interpol[b,\bar d][m](\varphi))&\leq\cost_{\mc C}(\interpol[b,\bar d][m](\varphi))\leq CN + C' (\bar d-d),
    \end{align*}
    with constants $C,C'>0$ depending only on $b$, $m$ and $\bar m$.
\end{proposition}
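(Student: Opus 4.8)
The plan is to mirror the proof of \Cref{cor:fixedknot}, feeding in the sharper rank bounds of \Cref{lem:boundrankssplines} and exploiting the fact that the three complexity sums split naturally at level $d$. Write $r_\nu := r_{\nu,\bar d}(\interpol[b,\bar d][m](\varphi))$. For $1\le \nu\le d$ the ranks obey the pure-spline bound with $\bar m$ in place of $m$, which I would simplify (using $b^{d-\nu}\ge 1$) to $r_\nu \le \min\{(\bar m+1)b^{d-\nu},\,b^\nu\}$; for $d<\nu\le\bar d$ they are bounded by the \emph{constant} $\bar m+1$ (along with the decaying $(m+1)b^{\bar d-\nu}$). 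The entire mechanism of the proof rests on this dichotomy: below level $d$ the network behaves like the level-$d$ spline network, while each level beyond $d$ adds only a bounded amount.

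For $\cost_{\mc N}$ I would write $\cost_{\mc N}(\interpol[b,\bar d][m](\varphi)) = \sum_{\nu=1}^{d} r_\nu + \sum_{\nu=d+1}^{\bar d} r_\nu$. The first sum is estimated exactly as in \Cref{cor:fixedknot}: using $r_\nu\le b^\nu$ for small $\nu$ and $r_\nu\le (\bar m+1)b^{d-\nu}$ for large $\nu$, with the crossover near $\nu=d/2$, the two geometric series sum to $Cb^{d/2}=C\sqrt N$ with $C$ depending only on $b$ and $\bar m$. The second sum has $\bar d-d$ terms, each at most $\bar m+1$, hence is bounded by $C'(\bar d-d)$ with $C'=\bar m+1$. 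This yields the claimed bound for $\mathcal N$.

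For $\cost_{\mc C}$ the same splitting applies, but the products $r_{\nu-1}r_\nu$ demand a little care. I would decompose $b r_1 + b\sum_{\nu=2}^{\bar d} r_{\nu-1}r_\nu + r_{\bar d}(m+1)$ into the block $\nu\le d$, the single interface term $\nu=d+1$, and the block $d+2\le\nu\le\bar d$. On the first block the product scales like $b^{2\nu}$ for small $\nu$ and like $(\bar m+1)^2 b^{2(d-\nu+1)}$ for large $\nu$, so after summing the two geometric series it contributes $O(b^d)=O(N)$, precisely as in \Cref{cor:fixedknot}. The interface term is $b\,r_d r_{d+1}\le b(\bar m+1)^2$, a constant, and each term on the last block is likewise at most $b(\bar m+1)^2$, contributing at most $b(\bar m+1)^2(\bar d-d)$; the end term $r_{\bar d}(m+1)\le(\bar m+1)(m+1)$ is absorbed into the constant. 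Collecting gives $\cost_{\mc C}\le CN + C'(\bar d-d)$, and the bound for $\cost_{\mc S}$ follows at once from $\cost_{\mc S}\le\cost_{\mc C}$.

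The only genuine obstacle, beyond bookkeeping, is the interface term $r_d r_{d+1}$ in $\cost_{\mc C}$, which couples a rank governed by the $O(b^{d-\nu})$ spline regime to one governed by the high-level regime. One must check that at $\nu=d$ the spline bound collapses to $r_d\le\bar m+1$ (since $b^{d-\nu}=1$ there) and that at $\nu=d+1$ the high-level bound already gives $r_{d+1}\le\bar m+1$, so that the product is genuinely $O(1)$. Once this single coupling is verified, the two regimes decouple and the geometric-series estimates proceed verbatim as in the level-$d$ case.
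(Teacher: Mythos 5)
Your proposal is correct and follows essentially the same route as the paper: split each complexity sum at level $d$, bound the block $\nu\le d$ exactly as in \Cref{cor:fixedknot} using the spline rank bounds (your \Cref{lem:boundrankssplines} is precisely the combination of \Cref{local-projection-ranks} and \Cref{thm:fixedknot} that the paper invokes), bound each of the $\bar d-d$ higher levels by the constant $\bar m+1$ (respectively $b(\bar m+1)^2$ for the products), and finish with $\cost_{\mc S}\le\cost_{\mc C}$. Your explicit verification of the interface product $r_d r_{d+1}\le(\bar m+1)^2$ is a point the paper treats implicitly, but it is the same estimate.
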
 	
\begin{proof}
Using \Cref{local-projection-ranks} and \Cref{thm:fixedknot} and following the proof of \Cref{cor:fixedknot}, we have 
\begin{align*}
\cost_{\mathcal{N}}(\interpol[b,\bar d][m](\varphi)) &\le \sum_{\nu=1}^d r_\nu(\varphi) + (\bar d-d) (\bar m+1) \le 
\frac{2b}{b-1} \sqrt{N} + (\bar d-d) (\bar m+1),\\
\cost_{\mathcal{C}}(\interpol[b,\bar d][m](\varphi)) &\le b r_1(\varphi) + \sum_{\nu=1}^d b r_{\nu-1}(\varphi)r_{\nu}(\varphi) + (\bar d-d) b(\bar m+1)^2 + b (m+1) \\
&\le  \max\{\frac{2b^2}{b^2-1},(m+1)\} N + (\bar d-d) b (\bar m+1)^2,
\end{align*}
and we conclude by noting that $ \cost_{\mc S}(\interpol[b,\bar d][m](\varphi))\leq\cost_{\mc C}(\interpol[b,\bar d][m](\varphi))$.
\end{proof}

%%%%%%%%%%%%%%%%%%%%%%%%%%%%%%%%%%%%%%%%%%%%%%%%%%%%%%%%%%%%%%%%%%%%%%%%%%%%%%%
%%%%%%%%%%%%%%%%%%%%%%%%%%%%%%%%%%%%%%%%%%%%%%%%%%%%%%%%%%%%%%%%%%%%%%%%%%%%%%%

\subsection{Free Knot Splines}

A free knot spline is a piece-wise polynomial function,
for which only the maximum polynomial order and the number of polynomial pieces is known
-- not the location of said pieces.
More precisely, the set of free knot splines of degree  $m\in\N_0$ with $N\in\N$ pieces is defined as
\begin{align*}
    \Sfree{N}{m}:=\set{f:[0,1)\rightarrow\R:\;\exists\xk\subset[0,1]\text{ s.t.\ }
    0=x_0<x_1<\ldots<x_N=1\text{ and } f_{|_{[x_k,x_{k+1})}}\in\P_m}.
\end{align*}

Clearly $\Sfree{N}{m}$ is not a linear subspace like  $\S{N}{m}{\mathfrak{c}}$.
Rank bounds for free knot splines are slightly more tricky than for fixed knot splines.
We proceed in three steps:
\begin{enumerate}
    \item\label{point1}    Assume first the knots $x_k$ of the free knot spline are all located on a multiple of $b^{-d_k}$ for some $d_k\in\N$,
                 i.e., only $b$-adic knots are allowed.
                 Assume also the largest $d_k$ is known.
    \item\label{point2}    Show that restricting to $b$-adic knots does not affect the approximation class as compared to
                 non-constrained free knot splines.
    \item\label{point3}    Show that the largest $d_k$ can be estimated using the desired approximation accuracy and
                 excess regularity/integrability of the target function.
\end{enumerate}
In this section, we only address point (\ref{point1}). In \Cref{sec:subbesov}, we will address (\ref{point2}) and (\ref{point3}).

\begin{definition}[Free $b$-adic Knot Splines]
    We call a sequence of points $\xkb\subset[0,1]$ \emph{$b$-adic} if
    \begin{align*}
        x_k^b=i_kb^{-d_k},
    \end{align*}
    for some $d_k\in\N$ and $0\leq i_k\leq b^{d_k}$.
    We use the superscript $b$ to indicate that a sequence is $b$-adic. With this we define the set of \emph{free $b$-adic knot splines} as
    \begin{align*}
            \Sfreeb{N}{m}:=\set{f:[0,1)\rightarrow\R:\;\exists\xkb\subset[0,1]\text{ s.t.\ }
            0=x^b_0<x^b_1<\ldots<x^b_N=1\text{ and } f_{|_{[x^b_k,x^b_{k+1})}}\in\P_m}.
    \end{align*}
\end{definition}

\begin{lemma}[Ranks of Free $b$-adic Knot Splines]\label{thm:freeknot}
    Let $\varphi\in\Sfreeb{N}{m}$ with $\xkb$ being the $b$-adic knot sequence
    corresponding to $\varphi$. Let $d:=\max\set{d_k:\;1\leq k\leq N-1}$.
    Then, $\varphi\in\Vbd{m}$ and
    \begin{align}\label{eq:boundfreeknot}
        r_{\nu,d}(\varphi)\leq\min\set{b^\nu,\,(m+1)b^{d-\nu},\,m+N}
    \end{align}
    for $1\le \nu\le d$. 
%    Moreover, the sparse representation complexity of $\varphi$ satisfies 
%$$
%\cost_{\mathcal{S}}(\varphi) \le 4 b^3 (m+1)^3 d^2 N.
%$$
\end{lemma}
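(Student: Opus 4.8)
The plan is to reduce the rank computation to a statement about the restrictions of $\varphi$ to level-$\nu$ $b$-adic intervals, using \Cref{link-ranks-d-nu}. First I would observe that, since $d=\max_k d_k$, every knot $x_k^b=i_kb^{-d_k}$ is an integer multiple of $b^{-d}$; hence on each interval $[b^{-d}j,b^{-d}(j+1))$ the spline $\varphi$ coincides with a single polynomial of degree at most $m$, which gives $\varphi\in\Vbd{m}$. The characterization
$$
r_{\nu,d}(\varphi)=\dim\linspan{\varphi(b^{-\nu}(j+\cdot)) : 0\le j\le b^\nu-1}
$$
from \Cref{link-ranks-d-nu} then reduces everything to bounding this dimension.

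The bound $r_{\nu,d}(\varphi)\le b^\nu$ is immediate, since the spanning set has only $b^\nu$ elements. The bound $(m+1)b^{d-\nu}$ follows directly from $\varphi\in\Vbd{m}$ together with \Cref{rank-bound-VbdS-closed-dilation}~(ii), using $\dim\P_m=m+1$. The only genuinely new bound is $m+N$, and this is where the free-knot structure enters.

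For the bound $m+N$, I would partition the index set $\set{0,\ldots,b^\nu-1}$ according to whether the interval $I_j:=[b^{-\nu}j,b^{-\nu}(j+1))$ contains a knot of $\varphi$ in its interior. Writing $B$ for the set of such indices and $A$ for its complement, each internal knot lies strictly inside at most one $I_j$, so $|B|\le N-1$. For $j\in A$, the restriction $\varphi(b^{-\nu}(j+\cdot))$ is a single polynomial, so these restrictions all lie in $\P_m$ and span a space of dimension at most $m+1$. The remaining restrictions, indexed by $B$, contribute at most $|B|\le N-1$ further dimensions. Since
$$
\linspan{\varphi(b^{-\nu}(j+\cdot)) : 0\le j\le b^\nu-1}=\linspan{\varphi(b^{-\nu}(j+\cdot)) : j\in A}+\linspan{\varphi(b^{-\nu}(j+\cdot)) : j\in B},
$$
we obtain $r_{\nu,d}(\varphi)\le (m+1)+(N-1)=m+N$, and taking the minimum of the three bounds completes the proof.

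I do not expect any serious obstacle: the argument is essentially bookkeeping built on \Cref{link-ranks-d-nu}. The one point deserving care is the counting in the $m+N$ bound — namely that a knot which happens to be a multiple of $b^{-\nu}$ falls on an interval endpoint and therefore does not contribute to $B$, so that $|B|$ is controlled by the number of internal knots rather than by $b^\nu$. This is exactly what makes the $\nu$-independent bound $m+N$ available and distinguishes free knot splines from the fixed-knot case of \Cref{thm:fixedknot}.
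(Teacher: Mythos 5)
Your proposal is correct and follows essentially the same route as the paper's own proof: both reduce to the span characterization of \Cref{link-ranks-d-nu}, split the level-$\nu$ intervals into those whose interiors contain an internal knot (at most $N-1$ of them) and those on which $\varphi$ restricts to a single polynomial (spanning at most $m+1$ dimensions), and obtain the remaining bounds $b^\nu$ and $(m+1)b^{d-\nu}$ from \Cref{rank-bound-VbdS-closed-dilation}. Your write-up merely makes explicit the bookkeeping (the sets $A$ and $B$, and the observation that knots falling on multiples of $b^{-\nu}$ do not contribute) that the paper's proof leaves implicit.
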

\begin{proof}%[Proof of \Cref{thm:freeknot}]
\label{proof:freeknot}
For any $0\le j < b^\nu$, the restriction of $\varphi$ to the interval $[b^{-\nu}j,b^{-\nu}(j+1))$ is  either a polynomial or
    a piece-wise polynomial where
    the number of such piece-wise polynomials is at most $N-1$,
    since there are at most $N-1$ discontinuities in $(0,1)$.
    Hence, \Cref{link-ranks-d-nu} implies that 
   $
        r_\nu(\varphi)\leq m+N 
  $ 
  for all $1\leq \nu \leq d$,  and we obtain the other bound $r_{\nu,d}(\varphi)\leq\min\set{b^\nu,\,(m+1)b^{d-\nu}}$ from
\Cref{rank-bound-VbdS-closed-dilation}  with $\dim(S) = m+1$.
\end{proof}

%\begin{proof}
%See \Cref{proof:freeknot}. 
%\end{proof}
\begin{proposition}[Complexity for Encoding Free Knot Splines]\label{cor:freeknot}
    For a free knot spline $\varphi\in\Sfreeb{N}{m}$
    with $d:=\max\set{d_k:\;1\leq k\leq N-1}$,
    the different complexities from \eqref{eq:complex} for
    encoding within $\Vb{m}$
    are bounded as
    \begin{align*}
        \cost_{\mc N}(\varphi)&\leq CdN,\\
        \cost_{\mc C}(\varphi)&\leq CdN^2,\\
        \cost_{\mc S}(\varphi)&\leq Cd^2N,
    \end{align*}
    with constants $C>0$ depending only on $b$ and $m$.
\end{proposition}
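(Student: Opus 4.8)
The plan is to feed the rank bounds of \Cref{thm:freeknot} into the three complexity measures of \eqref{eq:complex}, handling the sparse complexity $\cost_{\mc S}$ separately, since the trivial estimate $\cost_{\mc S}\leq\cost_{\mc C}$ only gives $CdN^2$, not the claimed $Cd^2N$. For the first two bounds I would just use the uniform estimate $r_\nu(\varphi)\leq m+N$ coming from \eqref{eq:boundfreeknot}, together with $\dim\P_m=m+1$ and $m+N\leq(m+1)N$. This gives at once
\begin{align*}
    \cost_{\mc N}(\varphi)=\sum_{\nu=1}^d r_\nu(\varphi)\leq d(m+N)\leq (m+1)\,dN,
\end{align*}
and
\begin{align*}
    \cost_{\mc C}(\varphi)=br_1+b\sum_{k=2}^{d}r_{k-1}r_k+r_d(m+1)\leq b\,d(m+N)^2+(m+1)(m+N)\leq Cd N^2,
\end{align*}
with $C$ depending only on $b$ and $m$. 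These two estimates are routine.

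The substance lies in $\cost_{\mc S}(\varphi)\leq Cd^2N$, where one must exploit the sparsity of the cores, not merely the ranks. I would decompose $\varphi$ into a sum of polynomials supported on single maximal $b$-adic intervals. Since the $N-1$ interior knots $x_k^b$ are $b$-adic of level at most $d$, the coarsest partition of $[0,1)$ into intervals $J=[jb^{-l},(j+1)b^{-l})$ on each of which $\varphi$ is a single polynomial has at most $CdN$ elements (each knot forces at most $O(d)$ refinement intervals, with the implied constant depending only on $b$). Writing $\varphi=\sum_J \varphi\,\indicator{J}$ over these maximal intervals, I would tensorize each summand at the common level $d$. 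Its tensorization factors as $\delta_{i_1,j_1}\cdots\delta_{i_l,j_l}$ times the tensorization of a rescaled degree-$m$ polynomial, so its first $l$ cores are rank-one selectors with one nonzero entry each, while the remaining $d-l$ cores together with the terminal core realize the tensor train of a polynomial of ranks $\leq m+1$ by \Cref{ranks-polynomials}. Hence
\begin{align*}
    \cost_{\mc S}(\varphi\,\indicator{J})\leq l+b(m+1)^2(d-l)+(m+1)^2\leq Cd,
\end{align*}
the point being that the $d$ tensorization levels, not the interval, govern the cost.

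Finally I would invoke the additivity of the sparse complexity under sums in the tensor train format: if $f=f^{(1)}+f^{(2)}$ over the same leaf space $\P_m$, then horizontally concatenating the first cores, placing the interior cores block-diagonally, and vertically concatenating the terminal cores produces a representation of $f$ with exactly $\cost_{\mc S}(f^{(1)})+\cost_{\mc S}(f^{(2)})$ nonzero entries; thus $\cost_{\mc S}$ is sub-additive over the decomposition. Summing the $O(dN)$ pieces, each of sparse cost $O(d)$, yields $\cost_{\mc S}(\varphi)\leq Cd^2N$. The main obstacle is exactly this bound, and it has three ingredients that must all be checked: (i) controlling the number $O(dN)$ of maximal $b$-adic pieces in terms of the knots; (ii) verifying that a single-interval polynomial carries only $O(d)$ nonzero parameters rather than $O(d)$ ranks' worth of full cores; and (iii) confirming that sparsity, unlike rank, behaves additively under the block concatenation of tensor trains, so that the per-piece costs may simply be summed.
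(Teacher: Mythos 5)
Your proposal is correct and takes essentially the same approach as the paper's proof: the $\cost_{\mc N}$ and $\cost_{\mc C}$ bounds are read off from the rank bounds of \Cref{thm:freeknot}, and the $\cost_{\mc S}$ bound is obtained by decomposing $\varphi$ into $O(dN)$ polynomial pieces on maximal $b$-adic intervals (the paper bounds the per-knot count by $2d(b-1)$ via a tree/shortest-path argument), each piece costing $O(d)$ nonzero parameters as a selector-times-polynomial tensor train, summed via sub-additivity of the sparse complexity. The only cosmetic difference is that you build the level-$d$ representation of each piece and the block-diagonal sum explicitly, where the paper delegates the extension and summation steps to Lemmas 3.11 and 3.12 of Part I.
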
 
\begin{proof}
Follows from \Cref{thm:freeknot}, cf. also \Cref{cor:fixedknot}.
\end{proof}
%%%%%%%%%%%%%%%%%%%%%%%%%%%%%%%%%%%%%%%%%%%%%%%%%%%%%%%%%%%%%%%%%%%%%%%%%%%%%%%

\begin{lemma}[Ranks of Interpolants of Free $b$-adic Knot Splines]\label{lem:boundranksfreeknotsplines}
    Let $\varphi\in\Sfreeb{N}{\bar m}$, $\bar m\ge m$, and $\xkb$ being the $b$-adic knot sequence
    corresponding to $\varphi$. Let $d:=\max\set{d_k:\;1\leq k\leq N-1}$. For $\bar d \ge d$,  
    the interpolant 
    $\interpol[b,\bar d][m](\varphi) \in V_{b,\bar d , m}$ satisfies  
      \begin{align*}
       & r_{\nu,\bar d}(\interpol[b,\bar d][m](\varphi)) \leq 
        \min\set{b^\nu,\,(\bar m+1)b^{d-\nu},\,\bar m+N} ,\quad  1\le \nu \le d,\\
      &  r_{\nu,\bar d}(\interpol[b,\bar d][m](\varphi))   \leq \min\set{(m+1)b^{\bar d-\nu},\, \bar m +1}, \quad d < \nu \le \bar d.
    \end{align*}
%Moreover, the sparse representation complexity satisfies
%$$
%\cost_{\mathcal{S}}(\interpol[b,\bar d][m](\varphi)) \le \max\{b,m+1\} ( \cost_{\mathcal{S}}({\varphi}) + b(\bar m+1)^4 (\bar d - d )).
%$$
\end{lemma}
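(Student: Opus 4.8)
The plan is to mirror the argument already used for \Cref{lem:boundrankssplines}, since the only structural change is that the rank bounds for the underlying spline are now supplied by \Cref{thm:freeknot} in place of the fixed-knot estimate \Cref{thm:fixedknot}. The three ingredients I would combine are: monotonicity of ranks under local projection (\Cref{local-projection-ranks}), which gives $r_{\nu,\bar d}(\interpol[b,\bar d][m](\varphi)) \le r_{\nu,\bar d}(\varphi)$ for every $1\le \nu \le \bar d$; stability of ranks under refinement of the level (\Cref{link-ranks-d-nu}), which gives $r_{\nu,\bar d}(\varphi) = r_{\nu,d}(\varphi)$ for $\nu\le d$; and the dilation-invariance bounds of \Cref{rank-bound-VbdS-closed-dilation}. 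Recall also that applying \Cref{thm:freeknot} with degree $\bar m$ shows $\varphi \in V_{b,d,\bar m}$.

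First I would handle the easy regime $1\le \nu \le d$. Here local-projection monotonicity followed by level-invariance reduces the task to bounding $r_{\nu,d}(\varphi)$, which is exactly the content of \Cref{thm:freeknot} (with degree $\bar m$). This yields $r_{\nu,\bar d}(\interpol[b,\bar d][m](\varphi)) \le r_{\nu,d}(\varphi) \le \min\set{b^\nu,\,(\bar m+1)b^{d-\nu},\,\bar m+N}$, i.e.\ the first claimed inequality.

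The more delicate regime is $d<\nu\le \bar d$, where the interpolation level overshoots the maximal knot level, and I would produce two complementary bounds and take their minimum. On the one hand, since $\interpol[b,\bar d][m](\varphi)\in V_{b,\bar d,m}$, each partial evaluation $(\interpol[b,\bar d][m](\varphi))(b^{-\nu}(j+\cdot))$ lies in $V_{b,\bar d-\nu,m}$, so by \Cref{link-ranks-d-nu} the $\nu$-rank is at most $\dim V_{b,\bar d-\nu,m}=(m+1)b^{\bar d-\nu}$ (equivalently, this is the $\nu\le\bar d$ bound of \Cref{rank-bound-VbdS-closed-dilation} applied at base level $\bar d$). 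On the other hand, since $\varphi\in V_{b,d,\bar m}$ and $\P_{\bar m}$ is closed under $b$-adic dilation, the high-index bound \eqref{eq:rjbig} of \Cref{rank-bound-VbdS-closed-dilation} gives $r_{\nu,\bar d}(\varphi)\le \dim\P_{\bar m}=\bar m+1$, whence local-projection monotonicity yields $r_{\nu,\bar d}(\interpol[b,\bar d][m](\varphi))\le \bar m+1$. Combining the two gives the second claimed inequality $r_{\nu,\bar d}(\interpol[b,\bar d][m](\varphi))\le \min\set{(m+1)b^{\bar d-\nu},\,\bar m+1}$.

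The only point needing care—and the one I would flag as the main obstacle—is the bookkeeping in the regime $d<\nu\le\bar d$: one must keep straight that the factor $(m+1)b^{\bar d-\nu}$ reflects the ambient degree $m$ and level $\bar d$ of the interpolant itself, whereas the factor $\bar m+1$ is inherited from $\varphi$ living in a degree-$\bar m$ tensor subspace at the strictly lower base level $d$, so that \eqref{eq:rjbig} is the applicable bound there rather than the $b^{d-\nu}\dim S$ bound. Everything else is a routine transcription of the fixed-knot proof.
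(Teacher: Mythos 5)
Your proposal is correct and follows essentially the same route as the paper's proof: \Cref{local-projection-ranks} plus \Cref{link-ranks-d-nu} plus \Cref{thm:freeknot} for $1\le\nu\le d$, and for $d<\nu\le\bar d$ the membership $\interpol[b,\bar d][m](\varphi)\in V_{b,\bar d,m}$ for the $(m+1)b^{\bar d-\nu}$ bound together with \eqref{eq:rjbig} applied to $\varphi\in V_{b,d,\bar m}$ for the $\bar m+1$ bound. The only difference is that you spell out the justification of the $(m+1)b^{\bar d-\nu}$ bound in more detail than the paper, which states it as immediate.
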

\begin{proof}%[Proof of \Cref{lem:boundranksfreeknotsplines}]
\label{proof:freeknot2}
From \Cref{local-projection-ranks}, we know that $r_{\nu,\bar d}(\interpol[b,\bar d][m](\varphi)) \le r_{\nu,\bar d}( \varphi)  $ for all $1\le \nu \le \bar d$. For $\nu\le d$, we have  from \Cref{link-ranks-d-nu} that $r_{\nu,\bar d}( \varphi) = r_{\nu,d}(\varphi)$. Then, we obtain the first inequality from  \Cref{thm:freeknot}.
Now consider the case $d < \nu \le \bar d$. 
The bound $ r_{\nu,\bar d}(\interpol[b,\bar d][m](\varphi)) \le (m+1)b^{\bar d-\nu}  $ simply follows from the fact that $\interpol[b,\bar d][m](\varphi) \in V_{b,\bar d,m}$. Since $\varphi \in V_{b,d,\bar m}$ and $\P_{\bar m}$ is closed under dilation, we obtain from 
  \Cref{rank-bound-VbdS-closed-dilation} the other bound $r_{\nu,\bar d}( \varphi) \le  \bar m+1.$
 \end{proof}
%\begin{proof}
%See \Cref{proof:freeknot2}.
%\end{proof}
\begin{proposition}[Complexity for Encoding Interpolants of Free Knot Splines]\label{cor:freeknotinterpolant}
    For a free knot spline $\varphi\in\Sfreeb{N}{\m}$
    with $d:=\max\set{d_k:\;1\leq k\leq N-1}$,
    the different complexities from \eqref{eq:complex} for
    encoding the interpolant $\interpol[b,\bar d][m](\varphi)$ of level $\bar d \ge d$ and degree $m \le \bar m$
    are bounded as
    \begin{align*}
        \cost_{\mc N}(\interpol[b,\bar d][m](\varphi))&\leq C d N + C' (\bar d - d) ,\\
        \cost_{\mc C}(\interpol[b,\bar d][m](\varphi))&\leq CdN^2 + C' (\bar d - d),\\
        \cost_{\mc S}(\interpol[b,\bar d][m](\varphi))&\leq Cd^2N + C' (\bar d - d),
    \end{align*}
    with constants $C,C'>0$ depending only on $b$, $m$ and $\bar m$.
\end{proposition}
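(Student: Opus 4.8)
The plan is to mirror the proof of \Cref{cor:fixedknotinterpolant}: I would substitute the rank bounds of \Cref{lem:boundranksfreeknotsplines} into the complexity formulas \eqref{eq:complex} and split every sum over the levels $\nu$ at $\nu = d$. The two regimes of \Cref{lem:boundranksfreeknotsplines} are designed for precisely this. For $1 \le \nu \le d$ the ranks coincide with the free knot spline bounds of \Cref{thm:freeknot} (with the degree $\m$ in place of $m$), so the first $d$ cores of $\interpol[b,\bar d][m](\varphi)$ reproduce the complexity of an ordinary free knot spline from \Cref{cor:freeknot}; for $d < \nu \le \bar d$ the ranks collapse to the constant $\m+1$, so each of the remaining $\bar d - d$ cores contributes only $O(1)$.

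For $\cost_{\mc N}$ and $\cost_{\mc C}$ this is routine. Writing $\cost_{\mc N}(\interpol[b,\bar d][m](\varphi)) = \sum_{\nu=1}^{d} r_\nu + \sum_{\nu=d+1}^{\bar d} r_\nu$, the head is bounded by $CdN$ exactly as in \Cref{cor:freeknot} and the tail by $(\m+1)(\bar d - d)$. For $\cost_{\mc C}$ I would split the product sum $\sum_\nu r_{\nu-1} r_\nu$ in the same way; since the bound $(\m+1)b^{d-\nu}$ already forces $r_d \le \m+1$, the boundary term $r_d r_{d+1}$ and all tail terms are bounded by $(\m+1)^2$, giving a tail contribution $C'(\bar d - d)$, while the head reproduces the $CdN^2$ bound of \Cref{cor:freeknot} and the final polynomial core $r_{\bar d}\dim S \le (\m+1)(m+1)$ is a constant.

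The genuinely delicate step is $\cost_{\mc S}$, where the shortcut used for fixed knots ($\cost_{\mc S} \le \cost_{\mc C}$) is too weak: it would give the head bound $CdN^2$ rather than the claimed $Cd^2N$. The latter rests on the sparsity argument behind \Cref{cor:freeknot}, and the hard part will be to argue that this sparsity survives the passage to the interpolant. The key is the tensor structure \eqref{tensor-structure-local-projection}: the operator $\interpol[b,\bar d][m]$ acts as $id_{\{1,\ldots,\bar d\}} \otimes \mathcal{I}_m$, i.e.\ only on the physical factor, so it leaves the first $d$ multi-index cores — and hence the sparse structure they inherit from $\varphi \in \Sfreeb{N}{\m}$ — untouched, and the head still obeys $\cost_{\mc S} \le Cd^2N$. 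For the $\bar d - d$ refinement cores the ranks are $O(1)$, so there the crude estimate $\cost_{\mc S} \le \cost_{\mc C}$ already yields $C'(\bar d - d)$. Summing head and tail for each of the three measures gives the stated bounds, with $C$ depending only on $b, m, \m$ and $C'$ on $b, \m, m$.
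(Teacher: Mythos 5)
Your proposal is correct and takes essentially the same route as the paper: for $\cost_{\mc N}$ and $\cost_{\mc C}$ the paper splits the sums at $\nu=d$ and inserts the rank bounds of \Cref{lem:boundranksfreeknotsplines}, exactly as you describe. For $\cost_{\mc S}$ the paper also exploits \eqref{tensor-structure-local-projection}: it keeps the first $d$ cores of a sparsest representation of $\varphi$ unchanged (only the $(d+1)$-st core's sparsity is multiplied by the constant factor $\max\{b,m+1\}$, so the head inherits the $Cd^2N$ bound of \Cref{cor:freeknot}), and appends $\bar d-d$ explicitly constructed cores whose sizes are bounded in terms of $b$, $m$, $\bar m$ alone, which is precisely your tail estimate.
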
 
\begin{proof}
Follows from \Cref{lem:boundranksfreeknotsplines}, cf. also
\Cref{cor:fixedknotinterpolant}.
\end{proof}

\begin{remark}\label{rem:simplebound}
    Both in \Cref{thm:freeknot} and \Cref{lem:boundranksfreeknotsplines}, the rank
    bound is of the order $N$
    and does
    not assume any specific structure of the spline approximation.
    This is a crude estimate that could be perhaps improved
    if one imposes additional restrictions, such as a tree-like support
    structure of the approximating splines.
    See also \Cref{rem:waveletssimple}.
\end{remark}

%%%%%%%%%%%%%%%%%%%%%%%%%%%%%%%%%%%%%%%%%%%%%%%%%%%%%%%%%%%%%%%%%%%%%%%%%%%%%%%
%%%%%%%%%%%%%%%%%%%%%%%%%%%%%%%%%%%%%%%%%%%%%%%%%%%%%%%%%%%%%%%%%%%%%%%%%%%%%%%

%%%%%%%%%%%%%%%%%%%%%%%%%%%%%%%%%%%%%%%%%%%%%%%%%%%%%%%%%%%%%%%%%%%%%%%%%%%%%%%
%%%%%%%%%%%%%%%%%%%%%%%%%%%%%%%%%%%%%%%%%%%%%%%%%%%%%%%%%%%%%%%%%%%%%%%%%%%%%%%
%%%%%%%%%%%%%%%%%%%%%%%%%%%%%%%%%%%%%%%%%%%%%%%%%%%%%%%%%%%%%%%%%%%%%%%%%%%%%%%

\subsection{Multi-Resolution Analysis}

We have already mentioned a connection between $\Vb{S}$ and MRAs: see
Definition \ref{def:badicdil}.
In this subsection, we further explore if and how MRAs are
intrinsically encoded within $\Vb{S}$.
Specifically, we consider the following three scenarios:
\begin{itemize}
    \item    $S=\P_m$ and piecewise polynomial MRAs.
    \item    $S$ itself contains the generators of the MRA.
    \item    We can \emph{approximate} the generators of the MRA
                with functions in $S$ or $V_{b,d,S}$ upto a fixed accuracy $\varepsilon>0$.
\end{itemize}

For a review of MRAs we refer to, e.g., \cite[Chapter 7]{Mallat2009}. 
An MRA of the space $\Lp$ ($1\leq p<\infty$)
consists of a sequence of spaces
\begin{align*}
    V_0\subset V_1\subset V_2\subset\ldots\subset \Lp.
\end{align*}
Usually one considers MRAs on $\Lp{(\R)}$,
but they can be adapted to bounded domains, see \cite{Cohen1997}.
The sequence $V_j$ is required to satisfy
certain properties, such as
invariance under $b$-adic dilation and shifting
-- the counterpart of Definition \ref{def:badicdil}.
Another property is that the sequence $V_j$ is generated by
dilating and shifting one or a finite number of so-called
\emph{generating} functions.

For a function $\psi:[0,1)\rightarrow\R$, we define
the level $l\in\N_0$ $b$-adic dilation, shifted by $j=0,\ldots,b^l-1$, as
\begin{align*}
    \li{\psi}(x):=   \begin{cases} b^{l/p}\psi(b^lx-j) & \text{for} \quad x\in [ b^{-l}j , b^{-l}(j+1))  \\
    0 & \text{elsewhere}.
    \end{cases}
\end{align*}
The factor $b^{l/p}$ normalizes these functions in $\Lp$.
The purpose of this subsection is to illustrate
the interplay between the
spaces generated by such functions $\li{\psi}$ and $\Vb{S}$.
The following result provides the tensorized representations of  functions $\li{\psi}$.

\begin{lemma}[$b$-adic dilations]\label{lemma:badicdils}
    Let $l \in \N$ and $j=\sum_{k=1}^l j_kb^{l-k}$. Then 
    \begin{align*}
       T_{b,l} \li{\psi} = e_{j_1}^p\otimes\cdots\otimes e_{j_l}^p\otimes \psi 
    \end{align*}
    with $e_{j_k}^p=b^{1/p}\delta_{j_k},$ and for all $d\ge l$,
    \begin{align*}
       T_{b,d} \li{\psi} = e_{j_1}^p\otimes\cdots\otimes e_{j_l}^p\otimes (T_{b,d-l} \psi).
    \end{align*}
\end{lemma}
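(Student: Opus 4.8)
The plan is to unwind the definitions of the tensorization map $T_{b,l}$ and of the dilated-shifted function $\li{\psi}$, and show that the two sides agree as functions on $\Ib^l\times[0,1)$ (respectively $\Ib^d\times[0,1)$). Recall from \Cref{def-tensorization-map} that a tensor $\tensor{g} = T_{b,l}g$ is determined by its partial evaluations: for a multi-index $(i_1,\ldots,i_l)$ encoding the integer $i=\sum_{k=1}^l i_k b^{l-k}$, the slice $\tensor{g}(i_1,\ldots,i_l,\cdot)$ equals the restriction of $g$ to $[b^{-l}i,b^{-l}(i+1))$ rescaled to $[0,1)$, i.e.\ $g(b^{-l}(i+\cdot))$. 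So the whole statement reduces to evaluating these slices for $g=\li{\psi}$ and matching them against the claimed elementary tensor.

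\textbf{First} I would compute $T_{b,l}\li{\psi}$. By the support condition in the definition of $\li{\psi}$, the function vanishes on every subinterval $[b^{-l}i,b^{-l}(i+1))$ except the one indexed by $i=j$, where it equals $b^{l/p}\psi(b^l x - j)$. Passing to the rescaled variable $x = b^{-l}(i+y)$ with $y\in[0,1)$, on the surviving interval $i=j$ the argument becomes $b^l x - j = y$, so the slice is $b^{l/p}\psi(y)$; all other slices are zero. This is precisely the tensor whose $(i_1,\ldots,i_l)$-slice is $b^{l/p}\,\indicator{(i_1,\ldots,i_l)=(j_1,\ldots,j_l)}\,\psi$. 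Writing $b^{l/p}=\prod_{k=1}^l b^{1/p}$ and factoring the indicator across coordinates as $\prod_{k=1}^l \delta_{i_k,j_k}$, this is exactly the rank-one tensor $e_{j_1}^p\otimes\cdots\otimes e_{j_l}^p\otimes\psi$ with $e_{j_k}^p = b^{1/p}\delta_{j_k}$, which gives the first identity.

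\textbf{Second}, for the case $d\ge l$ I would either invoke the nesting property $T_{b,d} = (\,\mathrm{id}\otimes T_{b,d-l})\circ T_{b,l}$ coming from the self-similar structure of the conversion map $\tbd$, or argue directly on slices. The direct argument is cleanest: for $d\ge l$ the support of $\li{\psi}$ forces the first $l$ digits $(i_1,\ldots,i_l)$ of any nonvanishing slice to equal $(j_1,\ldots,j_l)$, while the remaining digits $(i_{l+1},\ldots,i_d)$ together with $y$ parametrize the finer subdivision of the single interval $[b^{-l}j,b^{-l}(j+1))$ on which $\li{\psi}$ restricts to $b^{l/p}\psi(b^l x - j)$. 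Up to the constant, this restriction, viewed on the remaining $d-l$ levels, is exactly the tensorization $T_{b,d-l}\psi$ of $\psi$; the constant $b^{l/p}$ is absorbed into the $l$ factors $e^p_{j_k}$ as before. Matching slices then yields $e_{j_1}^p\otimes\cdots\otimes e_{j_l}^p\otimes(T_{b,d-l}\psi)$.

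\textbf{The only real subtlety} I anticipate is bookkeeping with the normalization factor: one must check that the factor $b^{l/p}$ distributes correctly as $\prod_{k=1}^l b^{1/p}$ into the $l$ vectors $e_{j_k}^p$ and that none of it is erroneously attached to the last factor $\psi$ or to $T_{b,d-l}\psi$. This is just a consistency check on the $\Lp$-normalization built into $\li{\psi}$ versus the convention $e^p_{j_k}=b^{1/p}\delta_{j_k}$, and it causes no difficulty once the scaling $x=b^{-l}(i+y)$ is tracked carefully. Everything else is a routine unwinding of definitions, so I would keep the write-up to the slice computation plus the factoring of the indicator and the constant.
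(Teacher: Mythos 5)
Your proof is correct, but it takes a different route from the paper. The paper's proof is two lines: it invokes \cite[Corollary~2.5]{partI} for the identity $T_{b,l}\li{\psi} = e_{j_1}^p\otimes\cdots\otimes e_{j_l}^p\otimes \psi$ (tensorization of a function supported on a single $b$-adic interval), and \cite[Lemma~2.6]{partI} for the second identity, which is precisely the nesting property $T_{b,d} = (\mathrm{id}\otimes T_{b,d-l})\circ T_{b,l}$ that you mention as one of your two options. What you do instead is re-derive both facts from first principles: writing $t_{b,l}(i_1,\ldots,i_l,y)=b^{-l}(i+y)$ with $i=\sum_{k=1}^l i_k b^{l-k}$, observing that the support condition kills every slice except $i=j$, computing that the surviving slice is $b^{l/p}\psi(y)$ (resp.\ $b^{l/p}\,T_{b,d-l}\psi$ after splitting the digits as $i'=ib^{d-l}+i''$), and then factoring $\delta_{i,j}=\prod_{k}\delta_{i_k,j_k}$ and $b^{l/p}=\prod_k b^{1/p}$ across the first $l$ modes --- using that the digit expansion is a bijection, so the indicator genuinely factors. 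Both arguments are sound; the paper's is shorter and keeps part~II modular by reusing part~I's structural lemmas, while yours is self-contained, makes the normalization bookkeeping explicit, and is more transparent for a reader without part~I at hand. Your slice computation is essentially the content of the two cited results, so there is no mathematical discrepancy, only a difference in what is taken as known.
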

\begin{proof}
The expression of $T_{b,l}  \li{\psi}$ follows from
\cite[Corollary 2.5]{partI}.
The second property follows from
\cite[Lemma 2.6]{partI}. 
\end{proof}
 
 \Cref{lemma:badicdils} immediately gives

\begin{corollary}\label{cor:dilation}
    For any subspace $S\subset\Lp$, $0\leq l,d_0$
    and $0\leq j\leq b^l-1$, we have
    \begin{align*}
        \li{\psi}\in\Vbd[@][d_0+l]{S}\quad\Leftrightarrow\quad
        \psi\in\Vbd[@][d_0]{S}.
    \end{align*}
    Moreover, if $\psi\in\Vbd[@][d_0]{S}$ satisfies
    \begin{align*}
        r_{\nu,d_0}(\psi)\leq r_{\nu,d_0},\quad 1\leq \nu\leq d_0,
    \end{align*}
    then for $ {d} \ge d_0$ and $l:= {d}-d_0$,
    we have $\li{\psi}\in\Vbd[@][d]{S}$ and
    \begin{align*}
        r_{\nu,d}(\li{\psi})=1\text{ for }1\leq \nu\leq l,\quad\text{and}\quad
        r_{\nu,d}(\li{\psi})\leq r_{\nu-l,d_0}\text{ for }l<\nu\leq d.
    \end{align*}
\end{corollary}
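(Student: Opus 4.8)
The plan is to derive everything from \Cref{lemma:badicdils}, since that lemma already gives the explicit tensorized representation of $\li[\psi]{d}{l}{j}$ that we need. The statement of \Cref{cor:dilation} has three parts: the membership equivalence, the membership claim $\li[\psi]{d}{l}{j}\in\Vbd[@][d]{S}$ under the rank hypothesis, and the rank formula. I would treat each in turn, with the first part doing most of the work.

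First I would prove the equivalence. By \Cref{lemma:badicdils} with $d = d_0 + l$, we have $T_{b,d_0+l}\,\li[\psi]{d_0+l}{l}{j} = e_{j_1}^p \otimes \cdots \otimes e_{j_l}^p \otimes (T_{b,d_0}\psi)$. The point is that $\li[\psi]{d_0+l}{l}{j} \in \Vbd[@][d_0+l]{S}$ means exactly that its tensorization lies in $(\R^{\Ib})^{\otimes(d_0+l)} \otimes S$. Since each $e_{j_k}^p$ is (a nonzero multiple of) a basis vector in $\R^{\Ib}$, the last-factor membership of the full tensor product in $(\R^{\Ib})^{\otimes l} \otimes \big((\R^{\Ib})^{\otimes d_0}\otimes S\big)$ is equivalent to $T_{b,d_0}\psi \in (\R^{\Ib})^{\otimes d_0} \otimes S$, which is precisely $\psi \in \Vbd[@][d_0]{S}$. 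This gives both directions of the equivalence simultaneously, and as a byproduct establishes the membership $\li[\psi]{d}{l}{j}\in\Vbd[@][d]{S}$ claimed in the second part (taking $d = d_0 + l$).

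For the rank formula, I would again read off the ranks directly from the representation $T_{b,d}\,\li[\psi]{d}{l}{j} = e_{j_1}^p \otimes \cdots \otimes e_{j_l}^p \otimes (T_{b,d_0}\psi)$. For $1 \le \nu \le l$, the $\{1,\dots,\nu\}$-matricization factors through the single vectors $e_{j_1}^p \otimes \cdots \otimes e_{j_\nu}^p$, each a rank-one elementary tensor, so $r_{\nu,d}(\li[\psi]{d}{l}{j}) = 1$; I would note that it is exactly $1$ (not merely $\le 1$) because $\li[\psi]{d}{l}{j}$ is nonzero. For $l < \nu \le d$, writing $\nu = l + \mu$ with $1 \le \mu \le d_0$, the leading $e_{j_k}^p$ factors contribute nothing to the rank across the cut $\{1,\dots,\nu\}$, so the $\beta$-rank equals the $\{1,\dots,\mu\}$-rank of $T_{b,d_0}\psi$, giving $r_{\nu,d}(\li[\psi]{d}{l}{j}) = r_{\mu,d_0}(\psi) = r_{\nu-l,d_0}(\psi) \le r_{\nu-l,d_0}$ by the hypothesis.

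The only mild subtlety—and the step I would be most careful about—is the precise tensor-algebra justification that tensoring on the left with fixed rank-one (elementary) factors leaves the relevant $\beta$-ranks unchanged and forces the leading ranks to equal $1$. This is an elementary property of the matricization rank under $e_{j_1}^p \otimes \cdots \otimes e_{j_l}^p \otimes (\cdot)$, but I would state it cleanly, perhaps citing the corresponding rank lemmas of \cite{partI} already invoked in \Cref{lemma:badicdils}, rather than re-deriving the matricization bookkeeping. Everything else is a direct substitution into \Cref{lemma:badicdils}, so I do not expect any genuine obstacle.
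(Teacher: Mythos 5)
Your proof is correct and takes essentially the same route as the paper: the paper states \Cref{cor:dilation} without a separate proof, as an immediate consequence of \Cref{lemma:badicdils}, and your argument is precisely the elaboration of that deduction, reading the membership equivalence and the matricization ranks off the representation $T_{b,d}\,\li{\psi} = e_{j_1}^p\otimes\cdots\otimes e_{j_l}^p\otimes (T_{b,d-l}\psi)$. The contraction/Kronecker-rank bookkeeping you flag as the one subtle point (a nonzero rank-one prefix neither changes membership in $(\R^{\Ib})^{\otimes d}\otimes S$ nor the ranks across cuts containing the prefix, and forces the leading ranks to equal $1$) is exactly what the paper leaves implicit, and you handle it correctly.
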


\subsubsection{Piecewise Polynomial MRA}

Let $\psi:[0,1)\rightarrow\R$ be a piece-wise polynomial of degree at most $m$
and let the discontinuities be located on a subset of
\begin{align*}
    \set{x_k:=kb^{-d_0}:\;k=0,\ldots,b^{d_0}}.
\end{align*}
Then, clearly $\psi\in V_{b,d_0,m}$. For $1\le \nu \le d_0$, the ranks can be bounded as in
Theorem \ref{thm:fixedknot}
\begin{align*}        
        r_{\nu,d_0}(\psi)\leq\min\set{(m-\cont)b^{d_0-\nu}+(\cont+1),\;b^\nu},
\end{align*}
where $\cont$ is the number of continuous derivatives.
Note that we can generalize this to arbitrary $b$-adic
discontinuity knots and proceed as in Theorem \ref{thm:freeknot},
but we restrict ourselves to the above setting for the
purpose of this presentation. Moreover, if $\psi$ is a typical
wavelet or scaling function, we expect $d_0$ and consequently $r_\nu(\psi)$
to be small.

By Lemma \ref{rank-bound-VbdS-closed-dilation},
for any $d\geq d_0$ we also have $\psi\in\Vbd[@][d]{m}$
with
\begin{align*}
 &r_{\nu,d}(\psi)\leq \min\set{(m-\cont)b^{d_0-\nu}+(\cont+1),\;b^\nu},\quad 1 \le \nu \le d_0,
\\  &  r_{\nu,d}(\psi)\leq m+1,\quad  d_0 \le \nu\leq d.
\end{align*}

In summary, if $\psi$ is piece-wise polynomial as above, then, since
$S=\P_m$ is closed under $b$-adic dilation, we know that
\begin{enumerate}[label=\alph*)]
    \item    $\psi\in\Vbd[@][d]{m}$ for any $d\geq d_0$ with ranks
                bounded by \Cref{rank-bound-VbdS-closed-dilation} and 
                \Cref{thm:fixedknot},
    \item    and $\li{\psi}\in\Vbd[@][{d}]{m}$ for any
                ${d}\geq d_0+l$ by
                Corollary \ref{cor:dilation} with similar rank bounds.
\end{enumerate}
 \begin{example}[Haar functions]\label{ex:haar}
 The Haar mother wavelet $\psi(x) = - \indicator{(0,1/2)}(x) + \indicator{(1/2,1)}(x)$ is such that $\psi \in V_{2,1,0}$ ($b=2$, $d_0 = 1$, $m=0$). Its tensorization at level $1$ is  $T_{2,1} \psi(i_1,y)= -\delta_0(i_1) + \delta_1(i_1) :=h(i_1)$ and its rank $r_{1,1}(\psi) = 1.$ Since $T_{2,1} \psi$ does not depend on $y$, we have
 $T_{2,d} \psi(i_1,\hdots,i_d,y) = h(i_1)$ and $r_{\nu,d}(\psi)=1$ for all $d\in \N$ and $1\le \nu \le d$. The Haar wavelets $ \psi_{2,l,j} = 2^{-l/2} \psi(2^lx-j)$ are such that for all $d \ge l+1,$ $T_{b,d} \psi_{2,l,j}(i_1,\hdots,i_d,y) = e^2_{j_1}(i_1)\hdots e^2_{j_{l-1}}(i_{l-1}) h(i_l)  $, and $r_{\nu,d}(\psi_{2,l,j})=1$ for all $1\le \nu\le d.$
 \end{example}
  \begin{example}[Hat functions]\label{ex:hat}
The hat function $\psi(x) = 2x \indicator{(0,1/2)}(x) + 2(1-x) \indicator{(1/2,1)}(x)$ is such that $\psi \in V_{2,1,1}$ ($b=2$, $d_0 = 1$, $m=1$). Its tensorization at level $1$ is  $T_{2,1} \psi(i_1,y)= \delta_1(i_1) + (\delta_0(i_1) - \delta_1(i_1)) y $ and its rank $r_{1,1}(\psi) = 2.$ For any $d \ge 1$, $T_{2,d} \psi(i_1,\hdots,i_d,y) = \delta_1(i_1) + (\delta_0(i_1) - \delta_1(i_1)) t_{2,d-1}(i_2,\hdots,i_d,y)$, with $t_{2,d-1}(i_2,\hdots,i_d,y )= 2^{-d+1} ( \sum_{k=1}^{d-1} i_{k+1} 2^{k} + y)$, an expression from which we deduce that  
  $r_{\beta,d}(\psi) \le 2$ for all $\beta \in \{1,\hdots,d+1\}$. The functions $ \psi_{2,l,j} = 2^{-l/2} \psi(2^lx-j)$ are such that for all $d \ge l+1,$ $T_{b,d} \psi_{2,l,j}(i_1,\hdots,i_d,y) = e^2_{j_1}(i_1)\hdots e^2_{j_{l}}(i_{l}) T_{b,d-l} \psi(i_{l+1},\hdots,i_d,y) $, and $r_{\beta,d}(\psi_{2,l,j})\le 2$ for all $\beta \in \{1,\hdots,d+1\}.$
 \end{example}

\subsubsection{$S$ Contains MRA Generators}

Suppose $\psi$ from above is a mother wavelet and $\psi\in S$.
Then, by Corollary \ref{cor:dilation}, $\li{\psi}\in\Vbd[@][l]{S}$.
However, we do not necessarily have $\li{\psi}\in\Vbd[@][d]{S}$
for $d>l$. This presents a problem when considering
two wavelets on different levels, since the sum
may not belong to $\Vbd{S}$ for any $d$
(see also \cite[Example 2.20]{partI}).
Thus, as before we require $S$ to be closed under
$b$-adic dilation.
It is not difficult to see that this is the case exactly
when $S$ includes
\emph{all} generators for the MRA, i.e., all mother scaling functions
and their shifts.

Put more precisely, let $\varphi^q$, $1\le q \le Q$, be mother scaling functions,
and $\psi^q$ the corresponding mother wavelets.
We intentionally include the possibility of multiple
scaling functions and wavelets since this is the relevant setting
for wavelets on bounded domains or orthogonal multi-wavelets.
The shift-invariant setting on unbounded domains
with a single scaling function and infinitely many integer shifts
can be handled similarly.

We assume these functions satisfy refinement relationships of the form
\begin{align}\label{eq:refinement}
    \varphi^q &= \sum_{q,i} a^q_i\varphi^q(b\cdot-i),\notag\\
    \psi^q &= \sum_{q,i} c^q_i\varphi^q(b\cdot-i),
\end{align}
where the number of non-zero $a^q_i\neq 0$, $c^q_i\neq 0$ is typically
smaller than the total number of scaled and
shifted $\varphi^q(b\cdot-i)$.
The refinement property
is standard for MRAs.
 \begin{example}[Haar functions]
Consider again \Cref{ex:haar}. Here $Q=1$, with mother Haar scaling function $\varphi(x) = \indicator{[0,1)}(x)$ such that $\varphi(x) = \varphi(2 x) + \varphi(2x-1)$, and the mother Haar wavelet $\psi(x) = - \varphi(2 x) + \varphi(2x-1)$.
\end{example}	

\begin{proposition}[Ranks for MRA]
    Suppose we have
    \begin{align*}
        S=\linspan{\varphi^q:\;q=1,\ldots,Q},
    \end{align*}
    with functions satisfying \eqref{eq:refinement}.
    Then, $S$ is closed $b$-adic dilation.
    Moreover, by the refinement relation,
    $\psi^q\in\Vbd[@][1]{S}$ and therefore for the ranks we obtain
    \begin{alignat*}{3}
        \li{\varphi^q}&\in\Vbd[@][d]{S},\; l\geq 0,\;d\geq l,\quad
        &&r_\nu(\li{\varphi^q})&&=1,\;1\leq \nu\leq l,\\
        & &&r_\nu(\li{\varphi^q})&&\leq\dim S,\;l< \nu\leq d,\\       
        \li{\psi^q}&\in\Vbd[@][d]{S},\;l\geq 0,\;d\geq l+1,\quad
       &&r_\nu(\li{\psi^q})&&=1,\;1\leq \nu\leq l,\\
       & &&r_\nu(\li{\psi^q})&&\leq \dim S,\;l<\nu\leq d.
    \end{alignat*}
\end{proposition}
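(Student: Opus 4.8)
The plan is to verify the claims by directly applying the tensorization lemmas already established, treating the scaling functions and wavelets separately. First I would confirm that $S=\linspan{\varphi^q : q=1,\ldots,Q}$ is closed under $b$-adic dilation. By \Cref{def:badicdil}, this requires that for each $\varphi^q$ and each $k\in\set{0,\ldots,b-1}$, the dilated-and-shifted function $\varphi^q(b^{-1}(\cdot+k))$ lies in $S$. The refinement relation \eqref{eq:refinement} expresses $\varphi^q$ as a linear combination of the functions $\varphi^{q}(b\cdot - i)$, which are precisely the level-one $b$-adic dilations of the generators; applying the same expansion to each dilated generator and using linearity of $S$ should close the argument. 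The key point is that the refinement coefficients let us rewrite any $b$-adic dilation back in terms of the spanning set.

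Next I would handle the scaling functions $\varphi^q$. Since $\varphi^q\in S=V_{b,0,S}$ by construction, \Cref{cor:dilation} applies directly with $d_0=0$ and $\psi=\varphi^q$: we obtain $\li{\varphi^q}\in\Vbd[@][d]{S}$ for any $l\geq 0$ and $d\geq l$, together with $r_\nu(\li{\varphi^q})=1$ for $1\leq\nu\leq l$ and $r_\nu(\li{\varphi^q})\leq r_{\nu-l,0}(\varphi^q)\leq\dim S$ for $l<\nu\leq d$, where the final bound is the trivial dimension bound on ranks. This step is essentially a bookkeeping application of the corollary once closedness under dilation is in hand.

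For the wavelets $\psi^q$, the only additional ingredient is locating $\psi^q$ at the correct base level. The refinement relation for $\psi^q$ in \eqref{eq:refinement} writes it as a linear combination of the level-one dilates $\varphi^q(b\cdot - i)$, each of which belongs to $V_{b,1,S}$ by \Cref{cor:dilation} (or directly by closedness under dilation); hence $\psi^q\in V_{b,1,S}=\Vbd[@][1]{S}$, which is why the admissible range starts at $d\geq l+1$ rather than $d\geq l$. Then I would invoke \Cref{cor:dilation} again, now with $d_0=1$ and the shift parameter $l$, to conclude $\li{\psi^q}\in\Vbd[@][d]{S}$ for $d\geq l+1$ with $r_\nu(\li{\psi^q})=1$ for $1\leq\nu\leq l$ and $r_\nu(\li{\psi^q})\leq r_{\nu-l,1}(\psi^q)\leq\dim S$ for $l<\nu\leq d$.

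The main obstacle I anticipate is the closedness-under-dilation verification, since it is the one step that genuinely uses the structure of the refinement relations rather than a direct citation. One must be careful that \eqref{eq:refinement} is written so that every shift $\varphi^q(b\cdot - i)$ appearing is itself a $b$-adic dilation lying in the span of the generating system, and that the indexing over $q$ is consistent (the relation as written sums over $q$ and $i$, so I would clarify whether cross-terms between different generators appear). Everything after that is a routine transcription of \Cref{cor:dilation} with $\dim S$ substituted for the generic rank bound, using that any $\beta$-rank is trivially at most $\dim S$.
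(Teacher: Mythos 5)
Your proposal is correct and follows essentially the same route as the paper, whose entire proof is ``an application of \Cref{cor:dilation} and \eqref{eq:refinement}'': you use the refinement relation to get closedness under $b$-adic dilation and to place $\psi^q$ in $\Vbd[@][1]{S}$, then transcribe the rank bounds from \Cref{cor:dilation}. The only bookkeeping point worth noting is that \Cref{cor:dilation} with a \emph{fixed} $d_0$ only yields $d=d_0+l$; for general $d\geq l$ (resp.\ $d\geq l+1$) one takes $d_0=d-l$ and uses \Cref{rank-bound-VbdS-closed-dilation} (valid since you established closedness under dilation) to bound $r_{\nu,d_0}$ by $\dim S$ — exactly the ingredients you already have in hand.
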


\begin{proof}
    An application of Corollary \ref{cor:dilation} and \eqref{eq:refinement}.
\end{proof}

\subsubsection{Approximate MRA Representations}

Let $\eta>0$ and suppose that for some $d=d(\eta)$ and ranks $r_{\nu}(\eta)$,
$1\leq \nu\leq d(\eta)$, there exists a $v^\eta\in\Vbd{S}$ such that
\begin{align*}
    \norm{\psi-v^\eta}[p]\leq\eta\quad r_{\nu}(v^\eta)\leq r_{\nu}(\eta),
\end{align*}
where $\psi$ is a mother wavelet.
In the following results $\psi$ can be replaced by
a scaling function $\varphi$ without any changes.

For $l\geq 0$, $0\leq j\leq b^l-1$ and $j=\sum_{k=1}^l j_k b^{l-k}$, we define
\begin{align}\label{vetaj-tensorized}
    v^\eta_{b,l,j}:= T_{b,l}^{-1} (e_{j_1}^p \otimes\cdots\otimes e_{j_l}^p \otimes v^\eta), 
    %\Tbd[@][d+l]^{-1}\left[
    %e_{i_1}^p \otimes\cdots\otimes e_{i_l}^p \otimes(v^\eta\circ\tbd[@][d])\right],  
\end{align}
where $e_{j_k}^p$ are defined as in Lemma \ref{lemma:badicdils}.
The ranks of $v^\eta_{b,l,j}$ are given by Corollary \ref{cor:dilation}.
Moreover, $v^\eta_{b,l,j}$ approximates $\li{\psi}$ at least as well
as $v^\eta$ approximates $\psi$.

\begin{lemma}[Approximate Tensorized MRA]\label{lemma:appmra}
    Let $\psi$, $v^\eta$ be given as above and $1\leq p<\infty$. Then,
    \begin{align*}
        \Vert{\li{\psi}-v^\eta_{b,l,j}}\Vert_p \leq\eta.
    \end{align*}
\end{lemma}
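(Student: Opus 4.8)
The plan is to reduce the statement to a one-line computation of a tensor (quasi-)norm, exploiting that $\Tbd[b][l]$ is an $\Lp$-isometry. First I would tensorize both functions at level $l$. By \Cref{lemma:badicdils},
$$\Tbd[b][l]\li{\psi} = e_{j_1}^p\otimes\cdots\otimes e_{j_l}^p\otimes\psi,$$
while by the very definition \eqref{vetaj-tensorized} of $v^\eta_{b,l,j}$ we have
$$\Tbd[b][l] v^\eta_{b,l,j} = e_{j_1}^p\otimes\cdots\otimes e_{j_l}^p\otimes v^\eta.$$
Subtracting and using linearity in the last tensor slot, the difference tensorizes to
$$\Tbd[b][l]\left(\li{\psi}-v^\eta_{b,l,j}\right) = e_{j_1}^p\otimes\cdots\otimes e_{j_l}^p\otimes(\psi-v^\eta).$$
Since $\Tbd[b][l]$ is a linear isometry by \Cref{thm:tensorizationmap}, it preserves the $\norm{\cdot}[p]$-norm, so $\norm{\li{\psi}-v^\eta_{b,l,j}}[p]$ equals the norm of the right-hand tensor, and it suffices to evaluate the latter.

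Next I would compute this tensor norm directly from the formula in \Cref{thm:tensorizationmap}. Writing $\tensor{g} := e_{j_1}^p\otimes\cdots\otimes e_{j_l}^p\otimes(\psi-v^\eta)$, the partial evaluation $\tensor{g}(i_1,\ldots,i_l,\cdot)$ vanishes unless $i_k=j_k$ for every $k$, because $e_{j_k}^p = b^{1/p}\delta_{j_k}$. Hence a single term survives in the sum defining $\norm{\tensor{g}}[p]^p$, namely the one indexed by $(j_1,\ldots,j_l)$, on which the $l$ basis factors contribute $\left(b^{1/p}\right)^l = b^{l/p}$. Therefore
$$\norm{\tensor{g}}[p]^p = b^{-l}\,\norm{b^{l/p}(\psi-v^\eta)}[p]^p = b^{-l}\,b^{l}\,\norm{\psi-v^\eta}[p]^p = \norm{\psi-v^\eta}[p]^p,$$
so that the weight $b^{-l}$ in the tensor norm exactly cancels the normalization $b^{l}$ produced by the $e_{j_k}^p$.

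Combining the two steps yields $\norm{\li{\psi}-v^\eta_{b,l,j}}[p] = \norm{\psi-v^\eta}[p] \le \eta$, which is the claim. This argument has no genuine obstacle; the only point requiring care is the bookkeeping of the $b^{\pm l/p}$ normalization, i.e.\ checking that the factor $b^{l/p}$ built into each $e_{j_k}^p$ precisely compensates the $b^{-l}$ weight in the $\Lp$ tensor norm, making the dilation/translation $\psi\mapsto\li{\psi}$ norm-preserving on the difference. I would also remark that the same conclusion follows more elementarily without the tensor formalism: both $\li{\psi}$ and $v^\eta_{b,l,j}$ are supported on $[b^{-l}j,b^{-l}(j+1))$, where they equal $b^{l/p}\psi(b^l\cdot-j)$ and $b^{l/p}v^\eta(b^l\cdot-j)$ respectively, and the substitution $u=b^l x-j$ in $\norm{\li{\psi}-v^\eta_{b,l,j}}[p]^p$ produces the same cancellation of the Jacobian $b^{-l}$ against $b^{l}$.
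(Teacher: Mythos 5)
Your proof is correct and follows essentially the same route as the paper: tensorize the difference via \Cref{lemma:badicdils} and the definition \eqref{vetaj-tensorized} to get $e_{j_1}^p\otimes\cdots\otimes e_{j_l}^p\otimes(\psi-v^\eta)$, then apply the isometry of \Cref{thm:tensorizationmap}. The only cosmetic difference is that the paper invokes the crossnorm property abstractly (each $e_{j_k}^p$ having unit normalized $\ell^p$-norm), whereas you verify that identity by direct computation of the weighted sum, which correctly exhibits the cancellation of $b^{-l}$ against the normalization $b^{l/p}$ of the factors.
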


\begin{proof}
From \eqref{vetaj-tensorized} and \Cref{lemma:badicdils}, we have $T_{b,d}(\li{\psi}-v^\eta_{b,l,j}) = e_{j_1}^p \otimes\cdots\otimes e_{j_l}^p \otimes (\psi - v^\eta).$ Then.
\Cref{thm:tensorizationmap} and the crossnorm property (see also \cite{partI})
imply $$  \Vert{\li{\psi}-v^\eta_{b,l,j}} \Vert_p = \Vert e_{j_1}^p \Vert_{\ell^p} \hdots \Vert e_{j_l}^p \Vert_{\ell^p} \Vert \psi - v^\eta \Vert_p = \Vert \psi - v^\eta \Vert_p \le \eta.$$
\end{proof}

Let $f\in\Lp$ be an arbitrary function, $\varepsilon>0$ and $f_N$ an
$N$-term wavelet approximation with $N=N(\varepsilon)$.
I.e.,
\begin{align*}
    \norm{f-f_N}[p]\leq\varepsilon,\quad f_N=
    \sum_{\lambda\in\Lambda}c_\lambda\psi_\lambda,
\end{align*}
where $\Lambda\subset\mc J$ is a
generalized index set\footnote{E.g., each index is of the form
 $\lambda=(b,l,j)$.} with $\#\Lambda\leq N$. We assume that $\psi_\lambda$ is normalized in $\Lp$.
Suppose each $\psi_\lambda$ can be approximated via
$v^\eta_\lambda\in\Vbd{S}$ as in Lemma \ref{lemma:appmra}.
This gives an approximation $v^\eta\in\Vbd{S}$ to $f$ via
\begin{align*}
    v^\eta=\sum_{\lambda\in\Lambda}c_\lambda v^\eta_\lambda.
\end{align*}
By Lemma \ref{lemma:appmra}, we can bound the error as
\begin{align}
    \norm{f_N-v^\eta}[p]\leq\sum_{\lambda\in\Lambda}|c_\lambda|
    \norm{\psi_\lambda-v^\eta_\lambda}[p]
    \leq\eta\sum_{\lambda\in\Lambda}|c_\lambda|.
    \label{apptruncatedwavelet}
\end{align}
A wavelet basis allows to characterize
different norms of $f$
by its coefficients in the basis,
see \cite{DeVore1997} for unbounded domains and
\cite{Cohen1997} for bounded domains.

%\alert{
In the following we assume that the wavelet basis
$\Psi:=\set{\psi_\lambda:\;\lambda\in\mc J}$ is a normalized basis of $L^p$.
A function $f \in \Lp$ admits a representation 
$$
f = \sum_{\lambda \in \mc J} c_\lambda \psi_\lambda.
$$
Denoting by $|\lambda|$ the dilation level of $\psi_\lambda$, we can introduce an equivalent representation
\begin{align}
f = \sum_{\lambda \in \mc J} c_{\lambda,p'} \psi_{\lambda,p'},\quad c_{\lambda,p'} = b^{-\vert \lambda \vert(\frac{1}{p'} - \frac{1}{p})}c_\lambda , \quad \psi_{\lambda,p'}= b^{\vert \lambda \vert(\frac{1}{p'} - \frac{1}{p})} \psi_\lambda ,\label{changeLpLp'}
\end{align}
where the functions $\psi_{\lambda,p'}$ are normalized in $L^{p'}.$
%}
Then, we assume that the wavelet basis 
 has the property that
for any function $f\in B^{\alpha}_{p',q}$ (see \Cref{def:besov} of Besov spaces), $0<p',q\leq\infty$, $\alpha>0$, it holds
%\alert{  
\begin{align}\label{eq:besovchar}
        \snorm{f}[B^{\alpha}_{p',q}]^q\sim\sum_{l=0}^\infty b^{l\alpha q}
        \left(\sum_{\lambda\in\mc J_l} | c_{\lambda,p'}|^{p'}\right)^{q/{p'}}
    \end{align}    
     where
%}
      $\mc J_l$ is the subset of $\mc J$ consisting of only
    level $|\lambda|=l$ indices.
    The condition \eqref{eq:besovchar} is satisfied for wavelets with sufficient
regularity and vanishing moments,
see \cite{DeVore98, Cohen1997, DeVore1997}.
In fact, we do not need \eqref{eq:besovchar} to hold for
any $0<p,q\leq\infty$ as the next proposition shows.

\begin{proposition}[Approximate $N$-Term Expansion]
    Let
    \begin{align*}
        f=\sum_{\lambda\in\mc J}c_{\lambda}\psi_\lambda
    \end{align*}
    be the wavelet expansion of $f\in\Lp$ for $1<p<\infty$,
    where the $\psi_\lambda$ are normalized in $\Lp$.
    Let $v^\eta$ be as above,
    assume $f\in\Bsqp[@][1][1]$ for
    $0<\alpha<1$ satisfying
    \begin{align}\label{eq:pstar}
        \alpha+1/p=1,
    \end{align}
    and assume the characterization \eqref{eq:besovchar}
    holds for $\Bsqp[@][1][1]$.
    
    Then,
    \begin{align*}
        \norm{f_N-v^\eta}[p]\leq C\snorm{f}[{\Bsqp[@][1][1]}]\eta,
    \end{align*}
    where $C=C(\Psi)>0$ is a constant independent of $N$,
    $\varepsilon$, $\eta$ or $f$.
\end{proposition}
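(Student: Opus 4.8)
The plan is to reduce the statement to a single coefficient bound, namely $\sum_{\lambda\in\Lambda}|c_\lambda|\le C\snorm{f}[{\Bsqp[@][1][1]}]$, and then to feed it into the error estimate \eqref{apptruncatedwavelet} that was already derived. First I would start from \eqref{apptruncatedwavelet}; since $\Lambda\subset\mc J$ and all summands are nonnegative, it gives
\begin{align*}
    \norm{f_N-v^\eta}[p]\le\eta\sum_{\lambda\in\Lambda}|c_\lambda|\le\eta\sum_{\lambda\in\mc J}|c_\lambda|.
\end{align*}
Hence it suffices to control the full $\ell^1$-sum of the $\Lp$-normalized coefficients by the Besov seminorm $\snorm{f}[{\Bsqp[@][1][1]}]$.

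The key step is to exploit the critical relation \eqref{eq:pstar}. Reading $\alpha+1/p=1$ as fixing the secondary normalization parameter $p'=1$, the renormalization \eqref{changeLpLp'} produces the exponent $1/p'-1/p=1-1/p=\alpha$, so that $c_{\lambda,1}=b^{-|\lambda|\alpha}c_\lambda$, equivalently $|c_\lambda|=b^{|\lambda|\alpha}|c_{\lambda,1}|$. I would then insert $p'=q=1$ and smoothness $\alpha$ into the wavelet characterization \eqref{eq:besovchar}, which yields
\begin{align*}
    \snorm{f}[{\Bsqp[@][1][1]}]\sim\sum_{l=0}^\infty b^{l\alpha}\sum_{\lambda\in\mc J_l}|c_{\lambda,1}|=\sum_{l=0}^\infty\sum_{\lambda\in\mc J_l}b^{|\lambda|\alpha}|c_{\lambda,1}|=\sum_{\lambda\in\mc J}|c_\lambda|,
\end{align*}
where the middle equality uses $|\lambda|=l$ on $\mc J_l$ and the last one combines the identity $|c_\lambda|=b^{|\lambda|\alpha}|c_{\lambda,1}|$ with the disjoint decomposition $\mc J=\bigsqcup_l\mc J_l$. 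Combining the two displays then gives $\norm{f_N-v^\eta}[p]\le C\snorm{f}[{\Bsqp[@][1][1]}]\eta$, with $C$ the equivalence constant in \eqref{eq:besovchar}, which depends only on $\Psi$ and not on $N$, $\varepsilon$, $\eta$ or $f$.

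I expect the only delicate point to be the normalization bookkeeping: one must verify that the choice $p'=1$ forced by \eqref{eq:pstar} makes the dilation weight $b^{l\alpha}$ appearing in \eqref{eq:besovchar} cancel exactly against the renormalization factor $b^{-|\lambda|\alpha}$, so that the weighted $\ell^1$-sum of the $L^1$-normalized coefficients collapses to the plain $\ell^1$-sum of the original $\Lp$-normalized coefficients. Conceptually this is nothing more than the standard fact that on the critical embedding line $1/\tau=\alpha+1/p$ the $N$-term approximation is governed by the $\ell^\tau$-quasi-norm of the coefficients, and here the value $\tau=1$ identifies that quantity with the seminorm $\snorm{f}[{\Bsqp[@][1][1]}]$. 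No genuine analytic difficulty arises; the entire content lies in matching the exponents.
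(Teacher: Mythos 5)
Your proposal is correct and follows essentially the same route as the paper's proof: both use \eqref{apptruncatedwavelet}, then identify $\sum_{\lambda\in\mc J}|c_\lambda|$ with $\snorm{f}[{\Bsqp[@][1][1]}]$ (up to the equivalence constant) via \eqref{eq:besovchar} with $p'=q=1$, the renormalization \eqref{changeLpLp'} with $p'=1$, and the critical relation \eqref{eq:pstar}. The only difference is presentational -- you start from the error bound and reduce to the coefficient estimate, while the paper establishes the coefficient identity first -- so there is nothing substantive to distinguish the two arguments.
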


\begin{proof}
%\alert{ 
 Using  \eqref{eq:besovchar} with $p'=q=1$, \eqref{eq:pstar} and then \eqref{changeLpLp'} with $p'=1$, we obtain
   $$
   \vert f \vert_{B^\alpha_{1,1}} \sim \sum_{\lambda} b^{\vert \lambda \vert \alpha} \vert c_{\lambda,1} \vert = 
     \sum_{\lambda}  b^{\vert \lambda \vert (1-\frac{1}{p})}  \vert c_{\lambda,1} \vert =  \sum_{\lambda} \vert  c_{\lambda} \vert.
   $$
  Then we conclude by using  \eqref{apptruncatedwavelet}.
%}
    \end{proof}

The condition of excess regularity $f\in B^\alpha_{1,1}$ with $\alpha = 1-1/p$ can be
replaced by excess integrability, at the cost of more complicated
estimates, or excess summability of
the coefficients $c_\lambda$.
The ranks of $v^\eta$ are bounded by $Nr_\nu(\eta)$,
and $d(v)$ is the maximum level among all $d(v^\eta_\lambda)$ that depends
only on $\eta$ and $\varepsilon$. I.e.,
for $\eta\sim\varepsilon$,
we can approximate functions $f\in\Lp$ with the same precision
as the wavelet system
$\Psi:=\set{\psi_\lambda:\;\lambda\in\mc J}$,
where the overall order of the approximation depends
on the approximation order of $\Psi$ and how well
$\Psi$ is approximated by $\Vbd{S}$.

\begin{remark}\label{rem:waveletssimple}
    Similarly to \Cref{rem:simplebound},
    the ranks of $N$-term approximations both for exact MRA and approximate
    MRA representations can be bounded by a multiple of $N$.
    This is a very crude estimate that assumes no specific structure of
    the wavelet approximation.
    Of course, the ranks occurring in practice may be much smaller,
    if we additionally impose restrictions on $\Lambda$,
    e.g., if we require that $\Lambda$ has a tree structure
    (see \cite{Cohen2001}).
\end{remark}
% !TEX root = ../part2.tex
\section{Direct and Inverse Estimates}\label{sec:dirinv}

In this section, we discuss direct and inverse estimates for
the approximation spaces defined in \Cref{sec:appspacestensors}.
Since we verified that $N^\alpha_q$, $C^\alpha_q$
and $S^\alpha_q$ satisfy \ref{P1} -- \ref{P4},
we can use classical approximation theory (see \cite{DeVore93, DeVore98})
to show that an entire scale of interpolation and smoothness spaces
is continuously embedded into these approximation classes.
We begin by briefly reviewing Besov and interpolation spaces.

%%%%%%%%%%%%%%%%%%%%%%%%%%%%%%%%%%%%%%%%
%%%%%%%%%%%%%%%%%%%%%%%%%%%%%%%%%%%%%%%%
%%%%%%%%%%%%%%%%%%%%%%%%%%%%%%%%%%%%%%%%
%%%%%%%%%%%%%%%%%%%%%%%%%%%%%%%%%%%%%%%%
%%%%%%%%%%%%%%%%%%%%%%%%%%%%%%%%%%%%%%%%

\subsection{Besov Spaces}

Besov spaces provide a natural framework of smoothness
for approximation theory since the measure of smoothness
in the Besov scale is fine enough to adequately capture
different approximation classes of functions.
At the same time many other smoothness spaces
such as Lipschitz spaces or (fractional) Sobolev spaces
are special cases of Besov spaces.

In principle, there are two standard ways of measuring smoothness:
by considering local rates of change of function values,
or by decomposing a function into certain building blocks and
considering the rates of decay of the high-frequency components.
The original definition of Besov spaces \cite{Besov1959} and the
one we follow here is the former.
The latter is also possible, see \cite{Triebel1992}.

Let $f\in\Lp$, $0<p\leq\infty$ and consider the difference operator
\begin{align*}
    \diff_h&:\Lp{([0,1))}\rightarrow\Lp{([0,1-h))},\\
    \diff_h[f](\cdot)&:=f(\cdot+h)-f(\cdot).
\end{align*}
For $r=2,3,\ldots$, the $r$-th difference is defined as
\begin{align*}
    \diff_h^r:=\diff_h \circ \diff_h^{r-1},
\end{align*}
with $\diff_h^1:=\diff_h$.
\emph{The $r$-th modulus of smoothness} is defined as
\begin{align}\label{eq:modsmooth}
    \mod_r(f,t)_p:=\sup_{0<h\leq t}\norm{\diff_h^r[f]}[p],\quad t>0.
\end{align}

\begin{definition}[Besov Spaces]\label{def:besov}
    For parameters $\alpha>0$ and $0<p,q\leq\infty$, define
    $r:=\lfloor\alpha\rfloor+1$ and the Besov  (quasi-)semi-norm as
    \begin{align*}
        \snorm{f}[\Bsqp]:=
        \begin{cases}
            \left({\int_0^{1}}[t^{-\alpha}\mod_r(f,t)_p]^q\frac{\d t}{t}\right)^{1/q},
            &\quad 0<q<\infty,\\
            {\sup_{0<t\le 1}}t^{-\alpha}\mod_r(f,t)_p,
            &\quad q=\infty.            
        \end{cases}
    \end{align*}
    The Besov (quasi-)norm is defined as
    \begin{align*}
        \norm{f}[\Bsqp]:=\norm{f}[p]+\snorm{f}[\Bsqp].
    \end{align*}
    The \emph{Besov space} is defined as
    \begin{align*}
        \Bsqp:=\set{f\in\Lp:\;\norm{f}[\Bsqp]<\infty}.
    \end{align*}
\end{definition}

In \Cref{def:besov}, any $r$ such that
$r>\alpha$ defines the same space with equivalent norms.
The primary
parameters are $\alpha$ and $p$: $\alpha$
is the order of smoothness, while $p$
indicates the measure of smoothness.
The smaller $p$, the less restrictive the measure of smoothness.
This is particularly important for
free knot spline approximation\footnote{Which can be viewed as the
theoretical idealization of adaptive approximation.},
 where frequently $p<1$,
which allows to measure smoothness for functions
that would be otherwise too irregular in the scale of Sobolev spaces.
The parameter $q$ is secondary,
allowing for a finer gradation of smoothness for the same primary
parameters $\alpha$ and $p$.

Varying the parameters $\alpha$, $p$ and $q$, we have
the obvious inclusions
\begin{align*}
\Bsqp[@][q_1][@]\subset\Bsqp[@][q_2][@],&\quad q_1\leq q_2,\\
\Bsqp[@][@][p_1]\subset\Bsqp[@][@][p_2],&\quad p_1\geq p_2,\\
\Bsqp[\alpha_1][@][@]\subset\Bsqp[\alpha_2][@][@],
&\quad \alpha_1\geq \alpha_2.
\end{align*}
For $\alpha<1$ and $q=\infty$, $\Bsqp[@][\infty][@]=\Lipsp$,
where the latter is the space of functions $f\in\Lp$
such that
\begin{align*}
    \norm{f(\cdot+h)-f}[p]\leq Ch^\alpha.
\end{align*}
If $\alpha$ is an integer and $p\neq 2$,
$\Bsqp[@][\infty][@]$ is slightly larger than the \emph{Sobolev space}
$\Wkp$. For non-integer $\alpha$,
the fractional Sobolev space $\Wkp$ is the same as $\Bsqp[@][p][@]$.
For the special case $p=2$ we even have
$\Wkp[@][2]=\Bsqp[@][2][2]$ for any $\alpha>0$.

To visualize the relationship between the different spaces,
we refer to the DeVore diagram in \Cref{fig:DeVore}.
    \begin{figure}[h]
        \centering
        \includegraphics[scale=.5]{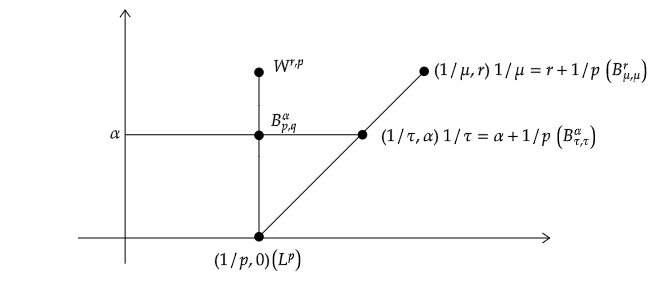}
        \caption{DeVore diagram of smoothness spaces \cite{DeVore98}.
        The Sobolev embedding line is the diagonal
        with the points $(1/\tau,\alpha)$ and $(1/\mu, r)$}.
        \label{fig:DeVore}
    \end{figure}
The $x$-axis corresponds to $1/p$,
where $p$ is the integrability parameter
and primary measure of smoothness.
The $y$-axis corresponds to the smoothness parameter $\alpha$.
A point on this plot represents a space.
E.g., the point $(1/2,0)$ is the space $\Lp[2]$,
$(0,0)$ is $\Lp[\infty]$, $(1/2, 1)$ is $\Wkp[1][2]$ and so on.
Of particular importance is the
\emph{Sobolev embedding line}:
points above this line are embedded in $\Lp$,
points on this line may or may not be embedded into $\Lp$,
and points below this line are never embedded in $\Lp$.
We cite the following important result
for later reference.

\begin{theorem}[{\cite[Chapter 12, Theorem 8.1]{DeVore93}}]
    Let $\alpha>0$, $0<p<\infty$ and define the
    \emph{Sobolev embedding number} $\sobolev$
    \begin{align*}
        \sobolev := \sobolev(\alpha,p) := (\alpha+1/p)^{-1}.
    \end{align*}
    Then,
    \begin{align*}
        \Bsqp[@][\sobolev][\sobolev]\hookrightarrow
        \Bsqp[@][p][\sobolev]\hookrightarrow
        \Lp.
    \end{align*}
\end{theorem}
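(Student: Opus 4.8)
The first embedding requires no work beyond what is already recorded: since $\alpha>0$ forces $1/\sobolev=\alpha+1/p>1/p$, we have $\sobolev<p$, and the monotonicity of Besov spaces in the secondary index (applied with common primary index $\sobolev$ and secondary index increasing from $\sobolev$ to $p$) gives directly $\Bsqp[@][\sobolev][\sobolev]\subset\Bsqp[@][p][\sobolev]$. All the content is therefore in the genuine Sobolev embedding $\Bsqp[@][p][\sobolev]\hookrightarrow\Lp$, in which the smoothness $\alpha$ is consumed exactly to raise integrability from $\sobolev$ to $p$.

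The plan for the second embedding is a multiscale (Littlewood--Paley type) decomposition combined with a Nikolskii inverse estimate. First I would approximate $f$ at each dyadic level $k$ by a near-best piecewise polynomial $S_k$ of degree $<r$, with $r=\lfloor\alpha\rfloor+1>\alpha$, on the uniform grid with $b^k$ cells, so that a cell-wise Whitney/Jackson estimate gives $\norm{f-S_k}[\sobolev]\lesssim\omega_r(f,b^{-k})_{\sobolev}$; setting the detail $s_k:=S_k-S_{k-1}$ (with $S_{-1}:=0$), the triangle inequality in $L^{\sobolev}$ (its $\sobolev$-quasi-norm form if $\sobolev<1$) yields $\norm{s_k}[\sobolev]\lesssim\omega_r(f,b^{-k})_{\sobolev}$. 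The crucial gain is the Nikolskii inequality for splines, obtained cell-by-cell from H\"older applied to polynomials: $\norm{s_k}[p]\lesssim b^{k(1/\sobolev-1/p)}\norm{s_k}[\sobolev]=b^{k\alpha}\norm{s_k}[\sobolev]$, where the exponent $1/\sobolev-1/p=\alpha$ is precisely the Sobolev embedding number. Writing $c_k:=b^{k\alpha}\omega_r(f,b^{-k})_{\sobolev}$, this gives $\norm{s_k}[p]\lesssim c_k$ with $(c_k)_k\in\ell^p$ and $\Vert(c_k)\Vert_{\ell^p}\sim\snorm{f}[B^\alpha_{\sobolev,p}]$, the latter by the standard discretization of the integral Besov seminorm (monotonicity of $t\mapsto\omega_r(f,t)_{\sobolev}$).

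The hard part will be the final summation $f=\sum_k s_k\to\Lp$. For $p>1$ the naive bound $\norm{f}[p]\le\sum_k\norm{s_k}[p]$ only controls $\Vert(c_k)\Vert_{\ell^1}$, whereas we have merely $\ell^p$ control and $\ell^p\not\subset\ell^1$; this gap is exactly the sharp Sobolev phenomenon and is where all the difficulty lies. I would resolve it not through the triangle inequality but by estimating $\omega_r(f,b^{-n})_p$ directly, splitting into coarse scales $k\le n$, where a Bernstein-type estimate for splines gives $\omega_r(s_k,b^{-n})_p\lesssim b^{-(n-k)\gamma}\norm{s_k}[p]$ for some $\gamma>0$, and fine scales $k>n$, where $\omega_r(s_k,b^{-n})_p\lesssim\norm{s_k}[p]$. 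The two resulting discrete convolutions of $(c_k)$ against summable geometric kernels are then bounded in $\ell^p$ by a weighted discrete Hardy inequality, whose geometric weights---available precisely because $\alpha>0$ and $r>\alpha$---respect the $\ell^p$ structure and avoid the spurious $\ell^1$ loss. This produces the sharp Ulyanov inequality $\omega_r(f,\delta)_p\lesssim(\int_0^\delta[t^{-\alpha}\omega_r(f,t)_{\sobolev}]^p\,dt/t)^{1/p}$, which together with $f\in L^{\sobolev}$ yields $\norm{f}[p]\lesssim\norm{f}[\sobolev]+\snorm{f}[B^\alpha_{\sobolev,p}]$. Alternatively, one may sidestep the explicit summation by realizing $\Bsqp[@][p][\sobolev]$ as a real $K$-interpolation space between $\Lp[\sobolev]$ and a higher-order Besov space and interpolating elementary endpoint embeddings; as the statement is classical (DeVore--Lorentz), this route is ultimately bookkeeping rather than a new idea.
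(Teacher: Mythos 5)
First, a point of reference: the paper does not prove this statement at all --- it is quoted as a known result from \cite[Chapter 12, Theorem 8.1]{DeVore93} and used as a black box --- so there is no internal proof to compare against, and I judge your argument on its own merits. Your handling of the first embedding (monotonicity in the secondary index, legitimate since $\sobolev<p$) is correct, and the first three steps of your plan for the second embedding are sound: the dyadic details $s_k$ with the Whitney bound $\norm{s_k}[\sobolev]\lesssim\omega_r(f,b^{-k})_{\sobolev}$, the cell-wise Nikolskii inequality $\norm{s_k}[p]\lesssim b^{k\alpha}\norm{s_k}[\sobolev]$, and the discretization $\Vert(c_k)\Vert_{\ell^p}\sim\snorm{f}[{\Bsqp[@][p][\sobolev]}]$ are all standard and correctly stated; for $0<p\le1$ the quasi-triangle inequality then already finishes. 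The gap sits exactly where you locate the difficulty --- the summation for $p>1$ --- and your proposed fix does not close it.

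Concretely, your coarse/fine splitting yields
\begin{align*}
  \omega_r(f,b^{-n})_p\;\lesssim\;\sum_{k\le n}b^{-(n-k)\gamma}c_k\;+\;\sum_{k>n}c_k ,
\end{align*}
and the fine-scale term is an $\ell^1$ tail: its ``kernel'' $\indicator{k>n}$ is not summable, so neither Young's inequality nor any unweighted or geometrically weighted discrete Hardy inequality converts it into the $\ell^p$ tail $\bigl(\sum_{k>n}c_k^p\bigr)^{1/p}$ demanded by your sharp Ulyanov inequality --- the comparison between the two goes the wrong way ($\sum_{k>n}c_k\ge(\sum_{k>n}c_k^p)^{1/p}$, and $c_k=1/k$ lies in $\ell^p$ with divergent sum). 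Worse, the embedding itself is the case $\delta\sim1$ of the Ulyanov inequality, where every scale is a fine scale, so the Bernstein gain never enters and the argument collapses to exactly the naive triangle inequality you set out to avoid, proving only $\Bsqp[@][1][\sobolev]\hookrightarrow\Lp$; this never yields the stated endpoint space $\Bsqp[@][p][\sobolev]$, and it recovers even the composite embedding of $\Bsqp[@][\sobolev][\sobolev]$ only when $\sobolev\le1$. The failure is structural, not a matter of sharper constants: in the fine-scale regime you use nothing beyond the size bounds $\norm{s_k}[p]\le c_k$ for piecewise polynomials at scale $b^{-k}$, and these alone do not imply $\norm{\sum_k s_k}[p]\lesssim\Vert(c_k)\Vert_{\ell^p}$ (take $s_k\equiv1$ for $k\le N$: the left-hand side is $N$, the right-hand side $N^{1/p}$). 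Any correct proof of the endpoint $q=p$ must exploit the oscillation/near-orthogonality of the details or invoke heavier machinery --- rearrangement inequalities in the style of Ulyanov and Kolyada, Franke--Jawerth-type embeddings, or real interpolation. Your closing aside is in fact the proof, not bookkeeping: establish the elementary off-endpoint embedding $\Bsqp[\alpha_1][1][\sobolev]\hookrightarrow\Lp[p_1]$ for a slightly larger smoothness $\alpha_1>\alpha$ still on the critical line ($1/p_1=1/\sobolev-\alpha_1>0$), identify $\Bsqp[@][p][\sobolev]=(\Lp[\sobolev],\Bsqp[\alpha_1][1][\sobolev])_{\alpha/\alpha_1,p}$ by the interpolation identity the paper records in \Cref{thm:intsmooth}, and interpolate the two embeddings using $(\Lp[\sobolev],\Lp[p_1])_{\alpha/\alpha_1,p}=\Lpq[p][p]=\Lp$. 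That route is complete and should have been your main argument, with the spline decomposition serving only to prove the easy off-endpoint input.
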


%%%%%%%%%%%%%%%%%%%%%%%%%%%%%%%%%%%%%%%%
%%%%%%%%%%%%%%%%%%%%%%%%%%%%%%%%%%%%%%%%
%%%%%%%%%%%%%%%%%%%%%%%%%%%%%%%%%%%%%%%%
%%%%%%%%%%%%%%%%%%%%%%%%%%%%%%%%%%%%%%%%
%%%%%%%%%%%%%%%%%%%%%%%%%%%%%%%%%%%%%%%%

\subsection{Interpolation Spaces}

Given two linear spaces $X$ and $Y$,
the theory of interpolation spaces allows
to define an entire scale of linear spaces
that are in some sense ``in-between'' $X$ and $Y$.
Moreover, this theory provides a convenient tool
for extending results derived for $X$ and $Y$ to
spaces in-between.

To be more specific, let $X$ and $Y$ be
normed\footnote{Quasi-seminormed spaces would suffice as well. We omit this
for simplicity.} linear
spaces with $Y\hookrightarrow X$.
Let $T$ be any linear operator such that
$T\in\L(X)$ and $T\in\L(Y)$, i.e.,
$T$ maps boundedly $X$  and $Y$ onto itself.
If for any normed linear space $Z$ with $Y\hookrightarrow Z\hookrightarrow X$,
we have $T\in\L(Z)$, then such $Z$ is called
an \emph{interpolation space}.

In this work, we specifically consider one method of constructing
such interpolation spaces: Peetre's $K$-functional.
Let $X$, $Y$ be Banach spaces with $Y\hookrightarrow X$. The $K$-functional
on $X$ is defined as
\begin{align*}
    K(f, t, X, Y):=K(f, t):=\inf_{g\in Y}\set{\norm{f-g}[X]+t\norm{g}[Y]},\quad
    t>0.
\end{align*}

\begin{definition}[Interpolation Spaces, {\cite[Chapter 5]{Bennett}}]
    Define a (quasi-)norm on $X$
    \begin{align*}
        \norm{f}[\theta,q]:=
        \begin{cases}
            \int_0^\infty\left[t^{-\theta}K(f,t)^q\frac{\d t}{t}\right]^{1/q},
            \quad &0<\theta<1,\;0<q<\infty,\\
            \sup_{t>0}t^{-\theta}K(f,t),&0\leq \theta\leq 1,\;\quad q=\infty.
        \end{cases}
    \end{align*}
    The \emph{interpolation space} $(X,Y)_{\theta,q}$ is defined as
    \begin{align*}
        (X,Y)_{\theta,q}:=
        \set{f\in X:\;\norm{f}[\theta,q]<\infty},
    \end{align*}
    and it is a complete (quasi-)normed space.
\end{definition}

Some basic properties of these interpolation spaces are:
\begin{itemize}
    \item $Y\hookrightarrow (X,Y)_{\theta,q}\hookrightarrow X$;
    \item $\XY[\theta_1][@]\hookrightarrow\XY[\theta_2][@]$ for
            $\theta_1\geq\theta_2$ and
            $\XY[@][q_1]\hookrightarrow\XY[@][q_2]$ for
            $q_1\leq q_2$;
    \item \emph{re-iteration} property:
            let $X':=\XY[\theta_1][q_1]$, $Y':=\XY[\theta_2][q_2]$. Then,
            for all $0<\theta<1$ and $0<q\leq\infty$, we have
            \begin{align*}
                (X',Y')_{\theta,q}=\XY[\alpha][q],\quad
                \alpha:=(1-\theta)\theta_1+\theta\theta_2.
            \end{align*}
\end{itemize}
We cite some important results on the relationship between
interpolation, approximation and smoothness spaces.
To this end, an important tool are the so-called
\emph{Jackson (direct)}
\begin{align}\label{eq:jackson}
    \E{f}\leq Cn^{-\rJ}\snorm{f}[Y],\quad\forall f\in Y,
\end{align}
and \emph{Bernstein (inverse)}
\begin{align}\label{eq:bernstein}
    \norm{\varphi}[\Asq]\leq Cn^{\rB}\norm{\varphi}[X],\quad\forall\varphi\in\tool,
\end{align}
inequalities,
for some $\rJ>0$ and $\rB>0$.

\begin{theorem}[Interpolation and Approximation, {\cite[Chapter 7]{DeVore93}}, {\cite[Chapter 5]{Bennett}}]\label{thm:intapp}
    If the approximation class $\Asq(X)$ satisfies \ref{P1}, \ref{P3}, \ref{P4}
    and the space $Y$ satisfies the
    Jackson inequality \eqref{eq:jackson}, then
    \begin{align*}
        \XY[\alpha/\rJ][q]&\hookrightarrow\Asq(X),\quad 0<\alpha<\rJ,\;0<q<\infty,\\
        Y&\hookrightarrow\Asq[\rJ][\infty](X).
    \end{align*}

    If the approximation class $\Asq(X)$ satisfies \ref{P1} -- \ref{P6}
    and the space $Y$ satisfies
    the Bernstein inequality \eqref{eq:bernstein}, then
    \begin{align*}
        \Asq(X)&\hookrightarrow\XY[\alpha/\rB][q],\quad 0<\alpha<\rB,\\
        \Asq[\rJ][\infty](X)&\hookrightarrow Y.
    \end{align*}
\end{theorem}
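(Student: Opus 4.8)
The plan is to reduce everything to the classical equivalence between the best-approximation error $\E{f}$ and the Peetre $K$-functional $K(f,t) = K(f,t,X,Y)$, and then to convert the resulting pointwise-in-$t$ estimates into the embeddings by a discretization together with a Hardy inequality. Concretely, I would first establish the two one-sided comparisons
\begin{align*}
    E_{cn}(f) &\le C\, K(f, n^{-\rJ}) \quad \text{(from Jackson)}, \\
    K(f, 2^{-n\rB}) &\le C\Big( \E[2^n]{f} + 2^{-n\rB}\sum_{k=0}^{n-1} 2^{k\rB}\, \E[2^k]{f} \Big) \quad \text{(from Bernstein)},
\end{align*}
and then match $t = n^{-\rJ}$ (resp.\ $t = 2^{-n\rB}$) with the weight $\theta = \alpha/\rJ$ (resp.\ $\theta = \alpha/\rB$) so that $t^{-\theta}$ becomes exactly the factor $n^{\alpha}$ appearing in the approximation (quasi-)norm.

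For the direct (Jackson) part I would fix $f\in X$ and any $g\in Y$. Since $0\in\tool$ by \ref{P1} and $\tool+\tool\subset\tool[cn]$ by \ref{P4}, adding a near-best approximant of $f-g$ from $\tool$ to one of $g$ gives $E_{cn}(f)\le E_n(f-g)+E_n(g)\le\norm{f-g}[X]+Cn^{-\rJ}\snorm{g}[Y]$, where the last step uses $E_n(f-g)\le\norm{f-g}[X]$ (again \ref{P1}, with \ref{P3}) and the Jackson inequality \eqref{eq:jackson} applied to $g\in Y$. Taking the infimum over $g$ yields $E_{cn}(f)\lesssim K(f,n^{-\rJ})$, hence by monotonicity of $K(f,\cdot)$ also $E_m(f)\lesssim K(f,m^{-\rJ})$. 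Inserting this into $\norm{f}[\Asq]^q\sim\sum_m[m^\alpha E_{m-1}(f)]^q m^{-1}$ and writing $m^\alpha=(m^{-\rJ})^{-\theta}$ for $\theta=\alpha/\rJ$, I would compare the sum with $\int_0^1[t^{-\theta}K(f,t)]^q\,\frac{\d t}{t}=\norm{f}[\theta,q]^q$: on each block $t\in[(m+1)^{-\rJ},m^{-\rJ}]$ one has $\frac{\d t}{t}\sim m^{-1}$ and $K(f,t)$ comparable to $K(f,m^{-\rJ})$ by concavity and monotonicity. This proves $\XY[\alpha/\rJ][q]\hookrightarrow\Asq$, and the endpoint $Y\hookrightarrow\Asq[\rJ][\infty]$ is immediate since \eqref{eq:jackson} gives $\sup_m m^{\rJ}E_{m-1}(f)\le C\snorm{f}[Y]$.

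For the inverse (Bernstein) part I would telescope near-best approximants. By \ref{P5}--\ref{P6} choose $\varphi_n\in\tool$ with $\norm{f-\varphi_n}[X]\le 2\E{f}$; by \ref{P2} and \ref{P4} each difference $\varphi_{2^{k+1}}-\varphi_{2^k}$ lies in $\tool[c2^{k+1}]$ and satisfies $\norm{\varphi_{2^{k+1}}-\varphi_{2^k}}[X]\lesssim\E[2^k]{f}$. Applying the Bernstein inequality \eqref{eq:bernstein} to each difference controls its smoothness norm by $C2^{k\rB}\E[2^k]{f}$; summing the telescope $\varphi_{2^n}=\varphi_1+\sum_k(\varphi_{2^{k+1}}-\varphi_{2^k})$ and using $K(f,t)\le\norm{f-\varphi_{2^n}}[X]+t\snorm{\varphi_{2^n}}[Y]$ with $t=2^{-n\rB}$ produces the second displayed estimate. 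Feeding this into $\norm{f}[\theta,q]$ with $\theta=\alpha/\rB<1$ and applying a discrete Hardy inequality to the doubly-indexed sum collapses it to $\big(\sum_k[2^{k\alpha}\E[2^k]{f}]^q\big)^{1/q}\sim\norm{f}[\Asq]$, giving $\Asq\hookrightarrow\XY[\alpha/\rB][q]$; the embedding into $Y$ follows from the same telescope together with a lower-semicontinuity (Fatou) argument for $\snorm{\cdot}[Y]$.

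I expect the inverse direction to be the main obstacle, for two reasons. First, the discrete Hardy inequality must be used in its (quasi-)normed form valid for $q<1$, and it is precisely the restriction $\alpha<\rB$ (equivalently $\theta<1$) that makes the geometric weights $2^{-n\rB}\sum_k 2^{k\rB}(\cdots)$ summable. Second, the endpoint $\theta=1$ is borderline: the geometric factor no longer decays, so one cannot integrate and must instead argue convergence of the telescope directly in $Y$ before passing to the limit. Since all of this is standard, one could alternatively invoke \cite[Chapter 7]{DeVore93} and \cite[Chapter 5]{Bennett} verbatim; the sketch above merely records which of \ref{P1}--\ref{P6} is consumed at each step.
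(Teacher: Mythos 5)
Your sketch reconstructs exactly the classical $K$-functional argument that the paper itself does not spell out but delegates to \cite[Chapter 7]{DeVore93} and \cite[Chapter 5]{Bennett}: Jackson gives $E_{cn}(f)\lesssim K(f,n^{-\rJ})$, Bernstein plus a telescope of near-best approximants and a discrete Hardy inequality gives the reverse comparison, and the block discretization $t\sim n^{-r}$ converts between the approximation norm and the interpolation norm. For the two embeddings with $0<\alpha<\rJ$ (resp.\ $0<\alpha<\rB$) and for the trivial endpoint $Y\hookrightarrow A^{\rJ}_{\infty}(X)$ this is correct and is the intended proof. Two small inaccuracies: passing from $E_{cn}(f)\lesssim K(f,n^{-\rJ})$ to $E_m(f)\lesssim K(f,m^{-\rJ})$ uses monotonicity of $n\mapsto E_n(f)$ (i.e.\ \ref{P2}, or $\Phi_n\subset\Phi_{cn}$ from \ref{P1}+\ref{P4} along a geometric subsequence), not monotonicity of $K$; and your second display should carry an additional term $2^{-n\rB}\norm{f}[X]$ coming from the Bernstein bound on $\varphi_1$.

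The genuine gap is the last endpoint, $\Asq[\rJ][\infty](X)\hookrightarrow Y$ (the paper's $\rJ$ here is presumably a typo for $\rB$; that is how the statement is invoked in \Cref{prop:inverse}). Your proposal to ``argue convergence of the telescope directly in $Y$'' and finish with lower semicontinuity/Fatou cannot work: for $f\in A^{\rB}_{\infty}(X)$ the Bernstein estimate gives only $\vert\varphi_{2^{k+1}}-\varphi_{2^k}\vert_{Y}\lesssim 2^{k\rB}E_{2^k}(f)\lesssim \Vert f\Vert_{A^{\rB}_{\infty}}$, a bound that is uniform in $k$ but not summable, so the partial sums of the telescope may have $Y$-seminorms growing linearly in the number of blocks; they need not converge, nor even be bounded, in $Y$, and there is nothing for Fatou to act on. Moreover this is not a repairable defect of the argument but of the statement: take $X=C(\mathbb{T})$, $\Phi_n$ the trigonometric polynomials of degree at most $n$ (which satisfy \ref{P1}--\ref{P6} after the harmless re-indexing $\Phi_0:=\set{0}$), and $Y=\mathrm{Lip}\,1$; Jackson and Bernstein hold with $\rJ=\rB=1$, yet $A^{1}_{\infty}(X)$ is the Zygmund class $B^{1}_{\infty,\infty}$, which strictly contains $\mathrm{Lip}\,1$. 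The cited references assert the equivalence $\Asq(X)=\XY[\alpha/r][q]$ only in the open range $0<\alpha<r$, together with the trivial $Y\hookrightarrow A^{\rJ}_{\infty}(X)$; the endpoint inverse embedding should be flagged as an over-statement (it can hold for specific tools with saturation, e.g.\ piecewise constants in $L^\infty$, but it is not a consequence of Jackson--Bernstein), rather than dispatched with a Fatou argument.
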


\begin{theorem}[Interpolation and Smoothness, \cite{DeVore98}]\label{thm:intsmooth}
    The following identities hold:
    \begin{alignat*}{2}
        (\Lp,\Wkp)_{\theta,q}&=\Bsqp[\theta\alpha],&&\quad 0<\theta<1,\;0<q\leq\infty,\;1\leq p\leq\infty\\
        (\Bsqp[\alpha_1][q_1],\Bsqp[\alpha_2][q_2])_{\theta,q}
        &=\Bsqp[\alpha],&&\quad\alpha:=(1-\theta)\alpha_1+\theta \alpha_2,\;
        0<p,q,q_1,q_2\leq\infty\\
        (\Lp,\Bsqp[@][\tilde{q}])_{\theta,q}&=\Bsqp[\theta \alpha],&&\quad 0<\theta<1,\;0<p,q , \tilde{q}\leq\infty.
    \end{alignat*}
\end{theorem}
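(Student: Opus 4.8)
The plan is to reduce all three identities to a single fundamental equivalence between Peetre's $K$-functional and the modulus of smoothness. Concretely, the key lemma is that for every positive integer $r$ and $1\le p\le\infty$,
\begin{align*}
    K(f,t^r,\Lp,\Wkp[r]) \sim \mod_r(f,t)_p, \quad t>0,
\end{align*}
with equivalence constants independent of $f$ and $t$. Granting this, the first identity follows by a direct computation. Starting from the interpolation norm and substituting $t\mapsto t^r$ (so that $\tfrac{\d t}{t}\mapsto r\tfrac{\d t}{t}$, with the factor $r$ absorbed into the equivalence constants) and applying the lemma,
\begin{align*}
    \norm{f}[\theta,q]^q = \int_0^\infty [t^{-\theta}K(f,t,\Lp,\Wkp[r])]^q \frac{\d t}{t}
    \sim \int_0^\infty [t^{-\theta r}\mod_r(f,t)_p]^q \frac{\d t}{t}.
\end{align*}
The right-hand side is, up to the harmless truncation of the integral at $t=1$ (the tail is controlled since $\mod_r(f,t)_p\lesssim\norm{f}[p]$ for $t\ge1$ and $\theta r>0$, and it merges with the $\norm{f}[p]$ term of the Besov norm), exactly $\snorm{f}[{\Bsqp[\theta r]}]^q$. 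Taking the modulus order $r=\alpha$ (an integer) gives $(\Lp,\Wkp)_{\theta,q}=\Bsqp[\theta\alpha]$; the $q=\infty$ case is identical with a supremum in place of the integral.

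The second and third identities then follow from the reiteration property recalled above, with no further analysis. Fix an integer $N>\max\set{\alpha_1,\alpha_2}$. By the first identity, $\Bsqp[\alpha_1][q_1]=(\Lp,\Wkp[N])_{\alpha_1/N,q_1}$ and $\Bsqp[\alpha_2][q_2]=(\Lp,\Wkp[N])_{\alpha_2/N,q_2}$, both parameters lying in $(0,1)$. Reiteration with $X=\Lp$, $Y=\Wkp[N]$ gives
\begin{align*}
    (\Bsqp[\alpha_1][q_1],\Bsqp[\alpha_2][q_2])_{\theta,q}
    = (\Lp,\Wkp[N])_{\beta,q}, \quad
    \beta = (1-\theta)\frac{\alpha_1}{N}+\theta\frac{\alpha_2}{N} = \frac{\alpha}{N},
\end{align*}
and applying the first identity once more identifies the result with $\Bsqp[\alpha]$. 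The third identity is the special case in which the lower endpoint is the base space $\Lp$ itself: writing $\Bsqp[\alpha][\tilde q]=(\Lp,\Wkp[N])_{\alpha/N,\tilde q}$ and using the standard form of reiteration with a trivial endpoint, $(\Lp,(\Lp,\Wkp[N])_{\eta,s})_{\theta,q}=(\Lp,\Wkp[N])_{\theta\eta,q}$, yields $(\Lp,\Bsqp[\alpha][\tilde q])_{\theta,q}=(\Lp,\Wkp[N])_{\theta\alpha/N,q}=\Bsqp[\theta\alpha]$.

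The main obstacle is the fundamental equivalence itself, and specifically its nontrivial direction $K(f,t^r,\Lp,\Wkp[r])\lesssim\mod_r(f,t)_p$. This requires producing, for each $t$, an explicit smooth approximant $g\in\Wkp[r]$ satisfying the two simultaneous bounds $\norm{f-g}[p]\lesssim\mod_r(f,t)_p$ and $t^r\snorm{g}[{\Wkp[r]}]\lesssim\mod_r(f,t)_p$. The standard device is an $r$-fold Steklov average (or a mollification) of $f$ at scale $t$: its error against $f$ is controlled by the $r$-th difference $\diff_h^r[f]$, while its $r$-th derivative picks up the factor $t^{-r}$ from differentiating the averaging kernel. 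The reverse inequality $\mod_r(f,t)_p\lesssim K(f,t^r,\Lp,\Wkp[r])$ is the easy direction, following from the uniform boundedness of $\diff_h^r$ on $\Lp$ together with the elementary estimate $\mod_r(g,t)_p\lesssim t^r\snorm{g}[{\Wkp[r]}]$ for the smooth part.

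Finally, the extension to the range $0<p<1$ in the second and third identities (where $\Wkp[r]$ is no longer an available endpoint) proceeds by replacing the Sobolev endpoint with a higher-order Besov space: one establishes the $K$-functional characterization of $\Bsqp[s][q']$ directly from $\mod_r(\cdot,t)_p$ for $r>s$, valid for all $0<p\le\infty$, and then reiterates among Besov spaces. The structure of the argument is otherwise unchanged.
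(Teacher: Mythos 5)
The paper gives no proof of this theorem: it is quoted verbatim from \cite{DeVore98}, so your attempt has to be measured against the standard proofs in the cited literature. For $1\leq p\leq\infty$ your route is exactly that standard argument: the Johnen--Scherer equivalence between $K(f,t^r;L^p,W^{r,p})$ and $\omega_r(f,t)_p$ (up to the additive term $t^r\Vert f\Vert_p$, or equivalently the use of the Sobolev seminorm on the second endpoint -- you gloss this, but on $[0,1)$ it is harmless bookkeeping), the substitution $t\mapsto t^r$ with truncation of the integral at $t=1$, and then reiteration. The limiting form $(X,(X,Y)_{\eta,s})_{\theta,q}=(X,Y)_{\theta\eta,q}$ you invoke for the third identity is a legitimate instance of the Holmstedt/reiteration theorem, since $X$ itself is an extreme space of class $0$ for the couple. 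So for $p\geq 1$ the proposal is correct and coincides with the proof the cited source relies on.

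The genuine gap is the range $0<p<1$, which the theorem explicitly includes in the second and third identities. Your closing paragraph claims one can ``establish the $K$-functional characterization of $B^s_{p,q'}$ directly from $\omega_r(\cdot,t)_p$'' with ``the structure of the argument otherwise unchanged''; this is precisely what fails. The Steklov/mollification device rests on two facts that are false for $p<1$: Minkowski's integral inequality (already the ``easy direction'' $\omega_r(g,t)_p\lesssim t^r\vert g\vert_{W^{r,p}}$ breaks down, since $\Delta_h g(x)=\int_0^h g'(x+s)\,\mathrm{d}s$ can no longer be estimated in $L^p$ by $h\Vert g'\Vert_p$), and the $L^p$-boundedness of averaging/convolution operators, which does not hold in the quasi-Banach range. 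Worse, the obstruction is not merely technical: for $0<p<1$ the $K$-functional for Sobolev-type couples \emph{degenerates} -- Ditzian, Hristov and Ivanov showed that $K(f,t^r;L^p,\dot{W}^{r,p})$ vanishes identically, because for $p<1$ one can smooth the knot discontinuities of a spline of degree $r-1$ at arbitrarily small cost in the $W^{r,p}$ quasi-seminorm -- so there is no modulus--$K$-functional equivalence of the kind you describe to be established by that method. The actual proofs of the second and third identities for all $0<p\leq\infty$ (DeVore and Popov, \emph{Interpolation of Besov spaces}, Trans.\ Amer.\ Math.\ Soc.\ 1988, which is what \cite{DeVore98} points back to) use a genuinely different mechanism: one characterizes $B^\alpha_{p,q}$ as an approximation space for dyadic splines via Jackson and Bernstein inequalities valid for all $p>0$ (equivalently, realizes Besov spaces as retracts of weighted sequence spaces of the form $\ell_q(b^{\alpha l}\ell_p)$), and then interpolates on that side. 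Note that the present paper only ever applies the theorem with $1\leq p\leq\infty$ (in the corollary following \Cref{thm:sobolev}), so your gap does not propagate into the paper's results, but as a proof of the statement as formulated it is incomplete.
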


%%%%%%%%%%%%%%%%%%%%%%%%%%%%%%%%%%%%%%%%
%%%%%%%%%%%%%%%%%%%%%%%%%%%%%%%%%%%%%%%%
%%%%%%%%%%%%%%%%%%%%%%%%%%%%%%%%%%%%%%%%
%%%%%%%%%%%%%%%%%%%%%%%%%%%%%%%%%%%%%%%%
%%%%%%%%%%%%%%%%%%%%%%%%%%%%%%%%%%%%%%%%

\subsection{Direct Estimates}

Theorems \ref{thm:intapp} and \ref{thm:intsmooth} allows us
to characterize the approximation classes introduced in
\Cref{sec:appspacestensors}
(and studied in \cite{partI}) by classical smoothness and interpolation spaces,
provided we can show for $X=\Lp$ and $Y=\Bsqp$
the Jackson \eqref{eq:jackson} and Bernstein \eqref{eq:bernstein}
inequalities.
As indicated earlier, the Jackson inequalities will follow from the
preparations in \Cref{sec:encoding}. We will also show
that Bernstein inequalities cannot hold.
This is an expression of the fact that the spaces $\Asq$
are ``too large'' in the sense that they are not continuously
embedded in any classical smoothness space.

We consider the approximation properties of tensor networks in $\Vb{m}$
for a fixed $m\in\N_0$.
Recall the definition of three different
complexity measures $\cost_{\mc N}$,
$\cost_{\mc C}$ and $\cost_{\mc S}$
from \eqref{eq:complex} and
the resulting approximating sets
$\tool^{\mc N}$, $\tool^{\mc C}$ and $\tool^{\mc S}$.
The best approximation error for $0<p\leq\infty$  is defined
accordingly as
\begin{align}\label{eq:besterror}
    E_n^{\mc N}(f)_p&:=\inf_{\varphi\in\Phi_n^{\mc N}}\norm{f-\varphi}[p],\\
        E_n^{\mc C}(f)_p&:=\inf_{\varphi\in\Phi_n^{\mc C}}\norm{f-\varphi}[p],\notag\\
            E_n^{\mc S}(f)_p&:=\inf_{\varphi\in\Phi_n^{\mc S}}\norm{f-\varphi}[p],\notag
\end{align}
and the corresponding approximation classes
$N^\alpha_q$, $C^\alpha_q$ and $S^\alpha_q$
as in \eqref{eq:approxclasses}.

\subsubsection{Sobolev Spaces}

We will apply local interpolation from \Cref{local-interpolation} to
approximate functions in Sobolev spaces $\Wkp[r][@]$ for any $r\in\N$.
These embeddings essentially correspond to
embeddings of Besov spaces $\Bsqp[@][p][p]$ into approximation
spaces $N^\alpha_q(\Lp)$,
$C^\alpha_q(\Lp)$ and $S_q^\alpha(\Lp)$: i.e., the approximation error is measured in the same
norm as smoothness\footnote{Compare to the embeddings
for RePU networks in \cite{Gribonval2019}.}.
To this end, we require

\begin{lemma}[Re-Interpolation]\label{lemma:reinterpol}
Let $f\in W^{  \bar m+1,p}$, $1\le p\le \infty$ and $\bar m \ge m$. For any $d\in \N_0$, $ \mathcal{I}_{b,d,  \bar m} f $ is a fixed knot spline in $ \S{N}{\bar m}{-1}$, $N=b^d$, and 
$$
\Vert f - \mathcal{I}_{b,d, \bar m} f \Vert_p \le C b^{-d( \bar  m +1)} \vert f \vert_{W^{  \bar m +1,p}}
$$
where $C$ is a constant depending only on $\bar m$ and $p$. Furthermore, for $\bar d \ge d$, 
$$
\Vert \mathcal{I}_{b,d,  \bar m} f  - \mathcal{I}_{b,\bar d,  m}\mathcal{I}_{b,d,  \bar m} f \Vert_p \le C'
\left(b^{-\bar d(m+1)}\vert f \vert_{W^{m +1,p}}
+b^{-(\bar d-d)(m+1)-d(\bar m+1)}\vert f \vert_{W^{\bar m +1,p}}\right)
$$
where $C'$ is a constant depending only on $\bar m$, $m$ and $p$.
\end{lemma}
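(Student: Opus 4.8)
The plan is to reduce both estimates to the interval-local interpolation property \eqref{property-local-interpolation} through a $b$-adic scaling argument, in the spirit of a Jackson estimate. Throughout I write $I_k=[b^{-d}k,b^{-d}(k+1))$ for the coarse (level $d$) intervals and $f_k(y):=f(b^{-d}(k+y))\in W^{\bar m+1,p}([0,1))$ for the rescaled local restriction. The chain rule gives $f_k^{(l)}(y)=b^{-dl}f^{(l)}(b^{-d}(k+y))$, hence the scaling identity $|f_k|_{W^{l,p}}=b^{-dl+d/p}|f|_{W^{l,p}(I_k)}$ (with $|f_k|_{W^{l,\infty}}=b^{-dl}|f|_{W^{l,\infty}(I_k)}$ when $p=\infty$). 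This single identity is what converts all reference-interval seminorms back into seminorms of $f$ on the physical intervals.

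First I would settle the first estimate. By the local definition of $\mathcal{I}_{b,d,\bar m}$ (\Cref{local-interpolation}, cf.\ \eqref{tensor-structure-local-projection}), the partial evaluation of $\mathcal{I}_{b,d,\bar m}f$ on $I_k$ is the rescaling of $\mathcal{I}_{\bar m}f_k\in\P_{\bar m}$; since these local polynomials are produced independently and need not match at the knots, $\mathcal{I}_{b,d,\bar m}f\in\S{N}{\bar m}{-1}$ with $N=b^d$. For the error I split $\|f-\mathcal{I}_{b,d,\bar m}f\|_p^p=\sum_k\int_{I_k}|f-\mathcal{I}_{b,d,\bar m}f|^p$, rescale each term to $b^{-d}\|f_k-\mathcal{I}_{\bar m}f_k\|_p^p$, apply \eqref{property-local-interpolation} with $l=0$ to get $\|f_k-\mathcal{I}_{\bar m}f_k\|_p\le C|f_k|_{W^{\bar m+1,p}}$, and substitute the scaling identity with $l=\bar m+1$. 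Summing over $k$ collapses the local seminorms into $|f|_{W^{\bar m+1,p}}$ and produces exactly the factor $b^{-d(\bar m+1)}$; the case $p=\infty$ is identical with the sum replaced by a maximum.

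For the second estimate the main obstacle is that $g:=\mathcal{I}_{b,d,\bar m}f$ is only \emph{piecewise} smooth: it is discontinuous across the coarse knots, so $g\notin W^{m+1,p}([0,1))$ globally and the first estimate cannot simply be reapplied to $g$. The decisive structural observation is that, because $\bar d\ge d$, the level-$\bar d$ partition refines the level-$d$ one, so no fine interval straddles a coarse knot; on each fine interval $g$ coincides with a single degree-$\bar m$ polynomial and is therefore smooth. Repeating the scaling argument of the previous paragraph, now with degree $m$ at level $\bar d$ and applying \eqref{property-local-interpolation} with $l=0$ on each fine interval, gives
\begin{align*}
\|g-\mathcal{I}_{b,\bar d,m}g\|_p\le C\,b^{-\bar d(m+1)}\Big(\sum_k|g|_{W^{m+1,p}(I_k)}^p\Big)^{1/p},
\end{align*}
i.e.\ the error is controlled by the \emph{broken} $W^{m+1,p}$-seminorm of $g$ over the coarse intervals (with a maximum replacing the sum when $p=\infty$).

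It remains to bound this broken seminorm of $g$ by the two seminorms of $f$. On $I_k$, $g$ is the rescaling of $\mathcal{I}_{\bar m}f_k$, so the scaling identity gives $|g|_{W^{m+1,p}(I_k)}=b^{d(m+1)-d/p}|\mathcal{I}_{\bar m}f_k|_{W^{m+1,p}}$. I would then invoke the triangle inequality
\begin{align*}
|\mathcal{I}_{\bar m}f_k|_{W^{m+1,p}}\le|f_k|_{W^{m+1,p}}+|f_k-\mathcal{I}_{\bar m}f_k|_{W^{m+1,p}},
\end{align*}
bounding the last term by \eqref{property-local-interpolation} with $l=m+1$ (admissible since $m+1\le\bar m+1$) to obtain $|f_k-\mathcal{I}_{\bar m}f_k|_{W^{m+1,p}}\le C|f_k|_{W^{\bar m+1,p}}$. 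Converting $|f_k|_{W^{m+1,p}}$ and $|f_k|_{W^{\bar m+1,p}}$ back to $f$ via the scaling identity (the $d/p$ powers cancel), taking the $\ell^p$-sum over $k$ via the ordinary triangle inequality in $\ell^p$ (valid for $p\ge1$), and multiplying by $b^{-\bar d(m+1)}$ produces the two stated terms; a direct check of exponents confirms $-\bar d(m+1)-d(\bar m-m)=-(\bar d-d)(m+1)-d(\bar m+1)$, matching the claim. The only delicate points are the bookkeeping of the dilation exponents and the refinement observation $\bar d\ge d$ that renders $g$ piecewise smooth on the fine grid.
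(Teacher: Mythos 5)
Your proof is correct and follows essentially the same route as the paper's: localize to the $b$-adic cells, apply the reference-interval property \eqref{property-local-interpolation} with $l=0$ for both error bounds and with $l=m+1$ plus a triangle inequality to control $\snorm{\mathcal{I}_{\bar m}f_k}[{W^{m+1,p}}]$, then convert seminorms back and forth with the $b$-adic scaling identity and sum over cells. The only difference is presentational: the paper runs the identical bookkeeping through the tensorization formalism (the factorization $T_{b,d}\,\mathcal{I}_{b,\bar d,m}\,T_{b,d}^{-1}=id_{\{1,\hdots,d\}}\otimes\mathcal{I}_{b,\bar d-d,m}$ and the $L^p$-isometry of $T_{b,d}$), whereas you work with explicit changes of variables on physical intervals and a broken Sobolev seminorm, exploiting the refinement observation that for $\bar d\ge d$ the fine grid does not straddle coarse knots.
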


\begin{proof}
See \Cref{proof:reinterpol}.
\end{proof}

With this we can show the direct estimate

\begin{lemma}[Jackson Inequality for Sobolev Spaces]\label{thm:jacksonfixed}
    Let $1\leq p\leq\infty$ and $r\in\N$.
    For any $f\in\Wkp[r]$ we have
    \begin{align}\label{eq:jacksonfixed2}
      E_n^{\mc N}(f)_p&\leq Cn^{-2r}\norm{f}[{\Wkp[r]}],\\
      E_n^{\mc S}(f)_p&\leq
      E_n^{\mc C}(f)_p\leq Cn^{-r}\norm{f}[{\Wkp[r]}],\notag
    \end{align}
    with constants $C$ depending on $r$, $m$, $b.$
\end{lemma}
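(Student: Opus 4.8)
The plan is to build an explicit element of the tool $\Vb{m}$ by a two-stage local interpolation and then trade the resolution level $d$ against each complexity budget. I may assume $m \le r-1$, since when $m \ge r-1$ the degree-$(r-1)$ interpolant already lies in $\Vbd{m}$ and its complexity is bounded directly by \Cref{cor:fixedknot}. Set $\bar m := r-1$ and let $g := \interpol[b,d][\bar m] f$ be the degree-$\bar m$ local interpolant of $f$ on the uniform mesh with $N = b^d$ cells. By the first estimate of \Cref{lemma:reinterpol} (the scaled local estimate \eqref{property-local-interpolation}),
$$\norm{f-g}[p] \le C b^{-dr}\snorm{f}[{\Wkp[r]}].$$
Since $g$ is a discontinuous spline of degree $\bar m = r-1 > m$, it does not yet lie in the fixed-degree space, so I would re-interpolate it as $\tilde g := \interpol[b,\bar d][m] g \in \Vbd[@][\bar d]{m}$ at a finer level $\bar d \ge d$.

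First I would fix $\bar d$ so that reducing the degree from $\bar m$ to $m$ costs nothing in the rate. The second estimate of \Cref{lemma:reinterpol} bounds $\norm{g - \tilde g}[p]$ by $C'(b^{-\bar d(m+1)}\snorm{f}[{\Wkp[m+1]}] + b^{-(\bar d-d)(m+1)-dr}\snorm{f}[{\Wkp[r]}])$. The second summand is already $\le b^{-dr}\snorm{f}[{\Wkp[r]}]$ because $\bar d \ge d$, and the choice $\bar d := \lceil dr/(m+1)\rceil$ forces $b^{-\bar d(m+1)} \le b^{-dr}$ as well. Using $m+1 \le r$ to bound $\snorm{f}[{\Wkp[m+1]}] \le \norm{f}[{\Wkp[r]}]$, the triangle inequality then yields
$$\norm{f - \tilde g}[p] \le C b^{-dr}\norm{f}[{\Wkp[r]}], \qquad \tilde g \in \Vbd[@][\bar d]{m}, \quad \bar d = O(d).$$

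Finally I would translate the level $d$ into complexity using \Cref{cor:fixedknotinterpolant} with $N = b^d$: $\cost_{\mc N}(\tilde g) \le C b^{d/2} + C'(\bar d-d)$ and $\cost_{\mc S}(\tilde g) \le \cost_{\mc C}(\tilde g) \le C b^d + C'(\bar d-d)$. The overhead $\bar d - d = O(d)$, i.e. logarithmic in the exponential leading terms, is negligible, so for a given budget $n$ I can choose $d$ with $b^{d/2} \sim n$ in the $\mc N$-case (whence $b^{-dr} \sim n^{-2r}$) and $b^d \sim n$ in the $\mc C$-case (whence $b^{-dr} \sim n^{-r}$), giving $E_n^{\mc N}(f)_p \le C n^{-2r}\norm{f}[{\Wkp[r]}]$ and $E_n^{\mc C}(f)_p \le C n^{-r}\norm{f}[{\Wkp[r]}]$; the estimate $E_n^{\mc S}(f)_p \le E_n^{\mc C}(f)_p$ is immediate from $\cost_{\mc S} \le \cost_{\mc C}$. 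The one genuinely delicate point is this level balancing: I must confirm that the $O(\log n)$ overhead incurred by lowering the degree from $r-1$ down to the fixed $m$ is swallowed by the polynomial complexity terms. This is precisely what allows a network of fixed polynomial degree $m$ to reproduce the full Sobolev rate $r$ for arbitrarily large $r$, which is the crux of the statement.
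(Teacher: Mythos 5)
Your proposal is correct and follows essentially the same route as the paper: a degree-$(r-1)$ local interpolant at level $d$, re-interpolation down to degree $m$ at level $\bar d = \lceil dr/(m+1)\rceil$ via \Cref{lemma:reinterpol}, complexity control through \Cref{cor:fixedknotinterpolant}, and the same trade of $b^{d/2}\sim n$ (resp.\ $b^d\sim n$) against the budget for the $\mc N$ (resp.\ $\mc C$, $\mc S$) measures. The paper's proof is the same argument, including your treatment of the easy case $\bar m\le m$ by exact representation and \Cref{cor:fixedknot}.
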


\begin{proof}
    Let $N:=b^d$ and $r:=\bar m+1>m+1$
    and fix some $f\in\Wkp[r]$.
    The case $r\leq m+1$ can be handled similarly with fewer steps.
    Let $s:=\mathcal{I}_{b,d,  \bar m}f$ and
    $\tilde{s}:=\mathcal{I}_{ b,\bar d, m}s\in\Vbd[@][\bar d]{m}$
    with a $\bar d\geq d$ to be specified later.

    From \Cref{lemma:reinterpol} we have
    \begin{align}\label{eq:reinterpol}
        \norm{f-\tilde{s}}[p]\leq C_1\norm{f}[{\Wkp[r][@]}]
        \left(b^{-d r}+b^{-\bar d(m+1)}\right)
    \end{align}
    for a constant $C_1$ depending only on $r$, $m$ and $p$.
    Thus, we set
    \begin{align}\label{eq:dbarfixed}
        \bar d :=\left\lceil\frac{dr}{m+1}\right\rceil,
    \end{align}
    which yields
    \begin{align}\label{eq:errorsobolev}
        \norm{f-\tilde{s}}[p]\leq 2C_1\norm{f}[{\Wkp[r][@]}]N^{-r}.
    \end{align}
    From  
    \Cref{cor:fixedknotinterpolant} and \eqref{eq:dbarfixed}, we can estimate
    the complexity of $\tilde s$ as
    \begin{align*}
        n:=\cost_{\mc N}(\tilde{s})&\leq \bar C_2(\sqrt{N}+\log_b(N))
        \leq C_2\sqrt{N},\\
        n:=\cost_{\mc S}(\tilde{s})&\leq
        \cost_{\mc C}(\tilde{s})\leq
        \bar C_2(N+\log_b(N))\leq C_2N
    \end{align*}
    with a constant $C_2$ depending on $b$, $m$ and $\bar m$.
    Thus, inserting into \eqref{eq:errorsobolev}, we obtain
    \eqref{eq:jacksonfixed2}.
    For the case $\bar m\leq m$, the proof simplifies since
    we can represent $s$ exactly and use \Cref{cor:fixedknot}.
\end{proof}

\begin{remark}\label{rem:extendp}
    One could extend the statement of \Cref{thm:jacksonfixed} to the
    range $0<p<1$ by considering
    the Besov spaces $\Bsqp[@][p][p]$.
    For $r\leq m+1$, this can be done by
    using the characterization of Besov spaces $\Bsqp[@][p][p]$
    for $0<p\leq\infty$ by dyadic splines from \cite{Devore88},
    as was done in \cite[Theorem 5.5]{Gribonval2019}
    for RePU networks.
    For $r>m+1$, one would have to additionally replace
    the interpolation operator of \Cref{local-interpolation} with
    the quasi-interpolation operator from \cite{Devore88}.
\end{remark}

\Cref{thm:intapp} and \Cref{thm:jacksonfixed} imply
\begin{theorem}[Direct Embedding for Sobolev Spaces]\label{thm:sobolev}
For any $r\in\N$ and $1\leq p\leq\infty$, we have 
\begin{align*}
    W^{r,p} \hookrightarrow
    N^{2r}_\infty(\Lp),\quad
    W^{r,p} \hookrightarrow
    C^{r}_\infty(\Lp)\hookrightarrow S^{r}_\infty(\Lp),
\end{align*}
and for $0<q\leq\infty$
    \begin{alignat*}{3}
        &(\Lp,\Wkp[r])_{\alpha/2r,q}&&\hookrightarrow N^{\alpha}_q(\Lp),\quad &&0<\alpha<2r,\\
        &(\Lp,\Wkp[r])_{\alpha/r,q}&&\hookrightarrow
        C^{\alpha}_q(\Lp)\hookrightarrow        
        S^{\alpha}_q(\Lp),\quad &&0<\alpha<r.
    \end{alignat*}
\end{theorem}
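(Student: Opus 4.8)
The plan is to apply the abstract interpolation machinery of \Cref{thm:intapp} and \Cref{thm:intsmooth} to the Jackson inequalities established in \Cref{thm:jacksonfixed}, taking $X=\Lp$ and $Y=\Wkp[r]$. First I would verify that the hypotheses of \Cref{thm:intapp} are met: by \Cref{comparing-spaces} (the main result of Part I recalled in \Cref{sec:recalling}), the classes $N^\alpha_q$, $C^\alpha_q$ and $S^\alpha_q$ are quasi-normed linear spaces, which is exactly the content of properties \ref{P1}, \ref{P3} and \ref{P4} needed for the direct half of \Cref{thm:intapp}. These properties were verified in Part I, so I may invoke them directly.

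Second, I would read off the Jackson exponents $\rJ$ from \Cref{thm:jacksonfixed}. That lemma gives, for $f\in\Wkp[r]$, the bounds $E_n^{\mc N}(f)_p\le Cn^{-2r}\norm{f}[{\Wkp[r]}]$ and $E_n^{\mc C}(f)_p\le Cn^{-r}\norm{f}[{\Wkp[r]}]$ (with $E_n^{\mc S}\le E_n^{\mc C}$). Thus the Jackson inequality \eqref{eq:jackson} holds with $Y=\Wkp[r]$ and exponent $\rJ=2r$ for the tool $\tool^{\mc N}$, and $\rJ=r$ for the tools $\tool^{\mc C}$ and $\tool^{\mc S}$. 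Feeding these into the direct statement of \Cref{thm:intapp} yields at once the embeddings $\Wkp[r]\hookrightarrow N^{2r}_\infty(\Lp)$ and $\Wkp[r]\hookrightarrow C^{r}_\infty(\Lp)$, $\Wkp[r]\hookrightarrow S^{r}_\infty(\Lp)$ (the $q=\infty$ endpoint line of \Cref{thm:intapp}), together with the interpolation-space embeddings $(\Lp,\Wkp[r])_{\alpha/2r,q}\hookrightarrow N^\alpha_q(\Lp)$ for $0<\alpha<2r$ and $(\Lp,\Wkp[r])_{\alpha/r,q}\hookrightarrow C^\alpha_q(\Lp)$ for $0<\alpha<r$.

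Third, for the chain $C^{r}_\infty\hookrightarrow S^{r}_\infty$ (and more generally $C^\alpha_q\hookrightarrow S^\alpha_q$), I would simply quote the embedding already supplied by \Cref{comparing-spaces}, namely $C^\alpha_q(\Lp)\hookrightarrow S^\alpha_q(\Lp)$, rather than re-derive it; alternatively it follows pointwise from $\cost_{\mc S}(\varphi)\le\cost_{\mc C}(\varphi)$, which forces $\tool^{\mc C}\subset\tool^{\mc S}$ and hence $E_n^{\mc S}\le E_n^{\mc C}$. This is the mechanism by which the $S$-embeddings inherit the same rates as the $C$-embeddings.

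I do not anticipate a serious obstacle in this particular proof, since it is a direct assembly: \Cref{thm:jacksonfixed} has already done the analytic work of producing the two-rate Jackson estimates, and \Cref{thm:intapp} packages the interpolation argument abstractly. The only point requiring a little care is bookkeeping of the exponents—the factor-of-two discrepancy between the neuron count $\cost_{\mc N}$ (rate $2r$) and the parameter/sparsity counts (rate $r$) must be tracked so that the correct $\theta=\alpha/\rJ$ is used in each interpolation identity, i.e.\ $\theta=\alpha/2r$ for $N^\alpha_q$ and $\theta=\alpha/r$ for $C^\alpha_q$ and $S^\alpha_q$. If one wished to rephrase the interpolation spaces $(\Lp,\Wkp[r])_{\theta,q}$ as Besov spaces (as in the subsequent \Cref{cor:besovspaces}), one would additionally invoke the first identity of \Cref{thm:intsmooth}, $(\Lp,\Wkp[r])_{\theta,q}=\Bsqp[\theta r][q][p]$, but that is beyond the statement being proved here.
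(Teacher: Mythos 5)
Your proposal is correct and follows essentially the same route as the paper: the paper obtains \Cref{thm:sobolev} precisely by feeding the Jackson estimates of \Cref{thm:jacksonfixed} (exponent $\rJ=2r$ for $\tool^{\mc N}$, $\rJ=r$ for $\tool^{\mc C}$ and $\tool^{\mc S}$) into the direct half of \Cref{thm:intapp}, with properties \ref{P1}--\ref{P4} supplied by Part I. Your handling of the $S$-embeddings via $\cost_{\mc S}(\varphi)\le\cost_{\mc C}(\varphi)$ (equivalently $E_n^{\mc S}\le E_n^{\mc C}$) and your remark that the Besov reformulation via \Cref{thm:intsmooth} belongs to the subsequent corollary, not to this theorem, both agree with the paper.
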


\begin{corollary}
    Together with \Cref{thm:intsmooth}, this implies the statement
    of \Cref{thm:introembed}.
\end{corollary}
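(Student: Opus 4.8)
The plan is to recognize that \Cref{thm:introembed} consists of two groups of embeddings, the first of which is literally the conclusion of \Cref{thm:sobolev}, and the second of which is obtained from the interpolation-space embeddings of \Cref{thm:sobolev} by identifying the $K$-interpolation spaces as Besov spaces through \Cref{thm:intsmooth}.

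For the Sobolev part, I would simply note that the embeddings $W^{r,p}\hookrightarrow N^{2r}_\infty(\Lp)$ and $W^{r,p}\hookrightarrow C^r_\infty(\Lp)\hookrightarrow S^r_\infty(\Lp)$ appearing in \Cref{thm:introembed} coincide verbatim with the first line of \Cref{thm:sobolev}, so no further argument is required for these.

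For the Besov part, I would fix $\alpha>0$ and $0<q\le\infty$ and choose any integer $r>\alpha$, so that $\theta:=\alpha/r\in(0,1)$. For the $N$-class, applying \Cref{thm:sobolev} with approximation smoothness $2\alpha$ (admissible since $0<2\alpha<2r$) gives $(\Lp,\Wkp[r])_{\alpha/r,q}\hookrightarrow N^{2\alpha}_q(\Lp)$; for the $C$- and $S$-classes, applying \Cref{thm:sobolev} with approximation smoothness $\alpha$ (admissible since $0<\alpha<r$) gives $(\Lp,\Wkp[r])_{\alpha/r,q}\hookrightarrow C^{\alpha}_q(\Lp)\hookrightarrow S^{\alpha}_q(\Lp)$. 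In both cases the interpolation space that appears is the same, and by the first identity of \Cref{thm:intsmooth} (with $\theta=\alpha/r$ and $1\le p\le\infty$) it equals $(\Lp,\Wkp[r])_{\alpha/r,q}=\Bsqp[\alpha]$, since $\theta r=\alpha$. Substituting this identity into the two displayed embeddings yields $\Bsqp[\alpha]\hookrightarrow N^{2\alpha}_q(\Lp)$ and $\Bsqp[\alpha]\hookrightarrow C^{\alpha}_q(\Lp)\hookrightarrow S^{\alpha}_q(\Lp)$, which is the remaining content of \Cref{thm:introembed}.

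The only point requiring attention is the admissibility of parameters: for each fixed $\alpha$ one must pick $r\in\N$ strictly larger than $\alpha$ to guarantee $\theta=\alpha/r\in(0,1)$ and to meet the constraints $0<2\alpha<2r$ and $0<\alpha<r$ in \Cref{thm:sobolev}; since $r$ may be taken arbitrarily large, the resulting embeddings hold for \emph{every} $\alpha>0$. I expect no substantive obstacle beyond this bookkeeping, as the hard analytic work, namely the Jackson estimates underlying \Cref{thm:sobolev}, has already been carried out.
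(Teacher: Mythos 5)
Your proposal is correct and matches the argument the paper intends (the corollary is stated without an explicit proof precisely because it is this substitution): the Sobolev embeddings are quoted verbatim from \Cref{thm:sobolev}, and the Besov embeddings follow by choosing $r\in\N$ with $r>\alpha$ and using the identity $(\Lp,\Wkp[r])_{\alpha/r,q}=\Bsqp[\alpha]$ from the first line of \Cref{thm:intsmooth} in the interpolation-space embeddings of \Cref{thm:sobolev}. Your parameter bookkeeping (taking approximation smoothness $2\alpha<2r$ for the $N$-class and $\alpha<r$ for the $C$- and $S$-classes, both landing on the same interpolation space) is exactly the right check, and no gap remains.
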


\subsubsection{Besov Spaces}\label{sec:subbesov}
Now we turn to direct embeddings of Besov spaces
$\Bsqp[@][\sobolev][\sobolev]$ into
$N^\alpha_q(\Lp)$, $C^\alpha_q(\Lp)$
and $S^\alpha_q(\Lp)$, where
$1/\sobolev=\alpha+1/p$. That is,
the smoothness is measured in a weaker norm
with $\tau < p$. The spaces $\Bsqp[@][\sobolev][\sobolev]$
are in this sense much larger than $\Bsqp[@][p][p]$.

They key to proving direct estimates for
Besov smoothness are the estimates of \Cref{cor:freeknot} and \Cref{cor:freeknotinterpolant} for
free knot splines. However, there are two issues
with encoding free knot splines as tensorized polynomials.
First, free knot splines are not restricted to $b$-adic knots
and thus cannot be represented exactly within $\Vb{m}$.
Second, even if all knots of a spline $s$ are $b$-adic,
the complexity of encoding $s$ as an element of $\Vb{m}$ depends
on the minimal level $d\in\N$ such that $s\in\Vbd{m}$, and
this level is not known in general.
We address these issues with the following two lemmas.

\begin{lemma}[$b$-adic Free Knot Splines]\label{lemma:badicfree}
    Let $0<p<\infty$, $0<\alpha<\m+1$ and let
    $\Sfreeb{N}{\bar m}$ denote the set of free knot splines of order $\m+1$ with
    $N+1$ knots restricted to $b$-adic points of the form
    \begin{align*}
        x_k:=i_kb^{-d_k},\quad 0\leq k\leq N,
    \end{align*}
    for some $d_k\in\N$ and $i_k \in \{0,\ldots,d_k\}$.
    For $\sobolev:=(\alpha+1/p)^{-1}$ being the Sobolev embedding number
    and $f\in\Bsqp[@][\sobolev][\sobolev]$, we have
    \begin{align}\label{eq:badicfree}
        \inf_{s\in\Sfreeb{N}{\bar m}}\norm{f-s}[p]\leq
        CN^{-\alpha}\snorm{f}[{\Bsqp[@][\sobolev][\sobolev]}].
    \end{align}
\end{lemma}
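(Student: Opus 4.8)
The plan is to realize the right-hand side of \eqref{eq:badicfree} as the error of an \emph{adaptive} $b$-adic partition of $[0,1)$ into $N$ pieces, on each of which $f$ is replaced by its best local polynomial approximant. Indeed, any piecewise polynomial of degree $\m$ whose breakpoints are $b$-adic and which has at most $N$ pieces belongs to $\Sfreeb{N}{\m}$; so it suffices to exhibit a partition $0=x_0^b<\dots<x_N^b=1$ into $b$-adic knots and an $s\in\Sfreeb{N}{\m}$, polynomial on each $[x_k^b,x_{k+1}^b)$, with $\norm{f-s}[p]\le CN^{-\alpha}\snorm{f}[{\Bsqp[@][\sobolev][\sobolev]}]$. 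This is the classical Birman--Solomjak / nonlinear spline mechanism (see \cite{DeVore93}), adapted to $b$-adic breakpoints. Throughout I write $M:=\snorm{f}[{\Bsqp[@][\sobolev][\sobolev]}]^\sobolev$ and, for an interval $I$, $e(f,I)_p:=\inf_{P\in\P_{\m}}\norm{f-P}[{L^p(I)}]$, so that on a partition $\{I_k\}$ the piecewise best-polynomial $s$ satisfies $\norm{f-s}[p]^p=\sum_k e(f,I_k)_p^p$.

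The key local ingredient is a \emph{critical-line} Whitney estimate. For any interval $I$, combining Whitney's theorem (best polynomial approximation controlled by the modulus of smoothness of order $\m+1>\alpha$) with the critical Sobolev embedding $\Bsqp[@][\sobolev][\sobolev]\hookrightarrow\Lp$ recalled above, applied to $f$ minus a suitable polynomial, one obtains
\begin{align}\label{eq:loccrit-bs}
    e(f,I)_p\le C\,\snorm{f}[{\Bsqp[@][\sobolev][\sobolev](I)}],
\end{align}
with $C$ independent of $I$. The point is that the relation $1/\sobolev=\alpha+1/p$ makes the powers of $|I|$ produced by rescaling the two seminorms cancel exactly, so that \eqref{eq:loccrit-bs} carries \emph{no} factor $|I|^{\theta}$. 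I would verify \eqref{eq:loccrit-bs} by the affine rescaling $I\to[0,1)$, where it reduces to $e(\tilde f,[0,1))_p\le C\snorm{\tilde f}[{\Bsqp[@][\sobolev][\sobolev]}]$, itself immediate from the embedding after subtracting the polynomial part.

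With \eqref{eq:loccrit-bs} in hand I would equilibrate the local Besov mass $\mu(I):=\snorm{f}[{\Bsqp[@][\sobolev][\sobolev](I)}]^\sobolev$. Using the (near-)additivity $\sum_k\mu(I_k)\le CM$ valid for any interval partition $\{I_k\}$, a greedy left-to-right sweep that cuts only at $b$-adic points produces $N$ pieces with $\mu(I_k)\le M/N$ each, on a $b$-adic grid fine enough to respect the tolerance. Then, since $p/\sobolev>1$ and $p/\sobolev-1=\alpha p$, the convexity bound gives
\begin{align*}
    \norm{f-s}[p]^p=\sum_k e(f,I_k)_p^p\le C\sum_k\mu(I_k)^{p/\sobolev}\le C\Big(\frac{M}{N}\Big)^{p/\sobolev-1}\sum_k\mu(I_k)\le C\,M^{p/\sobolev}N^{-\alpha p},
\end{align*}
i.e. $\norm{f-s}[p]\le C\,M^{1/\sobolev}N^{-\alpha}=C\snorm{f}[{\Bsqp[@][\sobolev][\sobolev]}]N^{-\alpha}$, which is \eqref{eq:badicfree}.

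The main obstacle is the equilibration step under the $b$-adic constraint, where two points need care. First, the functional $\mu$ built from a modulus of smoothness is only \emph{near}-additive: $r$-th differences couple neighbouring intervals, so I must either enlarge the intervals slightly in \eqref{eq:loccrit-bs} or pass through an additive surrogate (for instance a local-polynomial/wavelet coefficient functional, cf.\ the characterization \eqref{eq:besovchar}) in order to sum $\mu(I_k)\le CM$ legitimately. Second, restricting the cuts to $b$-adic points must not inflate the piece count beyond $O(N)$: because the pieces $[x_k^b,x_{k+1}^b)$ are allowed to be \emph{unions} of many $b$-adic cells, regions where $f$ is essentially polynomial cost only one piece, which avoids a parasitic $\log$ factor; making this rigorous is exactly where a maximal resolution level $d$ must be fixed in terms of the target accuracy and the excess smoothness of $f$ — the content of points~(2)--(3) announced before the lemma. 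The hypothesis $0<\alpha<\m+1$ enters only to guarantee $\m+1>\alpha$, so that Whitney's theorem applies.
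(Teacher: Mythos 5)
Your overall architecture --- equilibrate a local Besov mass over an adaptive partition with $b$-adic knots, apply a critical-line Whitney estimate on each piece, then sum using $p/\tau>1$ with $p/\tau-1=\alpha p$ --- is exactly the paper's (it is Petrushev's argument from \cite[Chapter 12, Theorem 8.2]{DeVore93} adapted to $b$-adic knots), and your local estimate \eqref{eq:loccrit-bs} and final summation are correct. The gap is that you leave unresolved precisely the two steps where the paper's proof does its actual work. First, the additive surrogate: the bound $\sum_k\mu(I_k)\le CM$ on which your greedy sweep relies is \emph{not} immediate for the sup-modulus local seminorm $\mu(I)=\snorm{f}[{B^\alpha_{\tau,\tau}(I)}]^\tau$, because each local modulus of smoothness takes its own supremum over the shift $h$: the elementary inequality $\sum_k\sup_{h}a_k(h)\ge\sup_{h}\sum_k a_k(h)$ points the wrong way, so disjointness of the intervals alone does not let you sum. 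The paper resolves this by replacing the sup-modulus with the \emph{averaged} modulus $\mathrm{w}_r(f,t)_\tau^\tau=\frac1t\int_0^t\Vert\Delta_h^r f\Vert_\tau^\tau\,\d h$ and writing $M^\tau=\int_0^1 G(x)\,\d x$ for an integrable density $G$, so that $\setf(t)=\int_0^t G(x)\,\d x$ is exactly additive, monotone and \emph{continuous}; this is the surrogate you allude to but do not construct. Second, the $b$-adic cut selection: your claim that a greedy sweep restricted to $b$-adic points ``produces $N$ pieces with $\mu(I_k)\le M/N$ each'' requires continuity of the set function in its right endpoint --- otherwise nothing guarantees the sweep exhausts $[0,1]$ within $N$ pieces. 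With the continuous $\setf$, the paper chooses $b$-adic right endpoints so that each of the first $N-1$ pieces carries mass between $M^\tau/N-\varepsilon/N$ and $M^\tau/N$, lets the last piece absorb the slack (mass at most $M^\tau/N+\varepsilon$), and sends $\varepsilon\to0$ at the very end, the final constant being $\varepsilon$-independent. These are not formalities: constructing $G$ and running the $\varepsilon$-tolerance argument \emph{are} the proof.

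A further point of confusion: your second ``obstacle'' --- that controlling the piece count under the $b$-adic restriction requires fixing a maximal resolution level $d$, which you defer to points~(2)--(3) --- is misplaced, and partly circular, since this lemma \emph{is} point~(2). In the paper's construction the partition has exactly $N$ pieces by design; each piece is a single interval between two consecutive $b$-adic knots of \emph{arbitrary}, unconstrained level $d_k$, which the set $\Sfreeb{N}{\m}$ permits, so no level bound and no decomposition of pieces into $b$-adic cells is needed here and no logarithmic inflation can occur. The level bound (point~(3), \Cref{lemma:smallinterval}) and the cell decompositions enter only later, when the spline is re-interpolated and encoded as a tensor network and its complexity must be controlled.
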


\begin{proof}
    See \Cref{proof:badicfree}.
\end{proof}

\begin{remark}
    In principle, Lemma \ref{lemma:badicfree} can be extended to the
    case $p=\infty$, $f\in\C[0]$ and the Besov space $\Bsqp[@][\sobolev][\sobolev]$
    replaced by the space of functions of bounded variation.
    However, the following Lemma \ref{lemma:smallinterval}
    does not hold for $p=\infty$, such that overall
    we can show the direct estimate of
    Theorem \ref{thm:jacksonbesov} only for $p<\infty$.
\end{remark}

\begin{lemma}[Smallest Interval Free Knot Splines]\label{lemma:smallinterval}
    Let $\delta>1$, $1\leq p<\infty$ and $f\in\Lp[p\delta]$.
    Let $q=q(\delta)>1$ be the conjugate of $\delta$ defined by
    \begin{align*}
        \frac{1}{\delta}+\frac{1}{q}=1.
    \end{align*}
    For $\varepsilon>0$, let $s=\sum_{k=1}^N s_k$ be a piece-wise polynomial
    such that
    \begin{align*}
        \norm{f-s}[p]\leq\varepsilon,
    \end{align*}
    where we assume $s_k$ is a polynomial over some interval $I_k$,
    zero otherwise and $I_k$, $k=1,\ldots,N$, form a partition of 
   $[0,1]$.
    
    Then, we can choose an index set $\Lambda=\Lambda(\varepsilon)
    \subset\set{1,\ldots,N}$ and a corresponding spline
    $\tilde{s}=\sum_{k \in \Lambda}^N\tilde{s}_k$  such that
%    \begin{align*}
%        \tilde{s}_k=
%        \begin{cases}
%            s_k,&\quad\text{if}\quad k\in\Lambda,\\
%            0,&\quad\text{otherwise},
%        \end{cases}
%    \end{align*}
%    and
    \begin{align}\label{eq:intervalbound}
        \norm{f-\tilde{s}}[p]\leq 2^{1/p}\varepsilon\quad\text{with}\quad
        |I_k|>N^{-q} {\norm{f}[p\delta]^{-pq}}\varepsilon^{pq}=:\varrho(\varepsilon),\quad
        k\in\Lambda.
    \end{align}
\end{lemma}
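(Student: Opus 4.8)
The plan is to produce $\tilde s$ from $s$ simply by \emph{deleting} every polynomial piece that lives on an interval which is too short, and then to show that the excess integrability $f\in\Lp[p\delta]$ keeps the error incurred by these deletions under control. Concretely, I would put
$$\Lambda:=\set{k\in\set{1,\ldots,N}:\;|I_k|>\varrho(\varepsilon)},\qquad \tilde s_k:=s_k\ \ (k\in\Lambda),$$
so that $\tilde s$ coincides with $s$ on the retained (long) intervals and vanishes on the discarded (short) ones. By construction every interval carrying a nonzero piece of $\tilde s$ has length exceeding $\varrho(\varepsilon)$, which is the size condition in \eqref{eq:intervalbound}.

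Since the $I_k$ partition $[0,1]$ and each $s_k$ vanishes off $I_k$, the $p$-th power of the error splits over the partition as
$$\norm{f-\tilde s}[p]^p=\sum_{k\in\Lambda}\|f-s_k\|_{L^p(I_k)}^p+\sum_{k\notin\Lambda}\|f\|_{L^p(I_k)}^p.$$
The first sum is dominated by $\sum_{k=1}^N\|f-s_k\|_{L^p(I_k)}^p=\norm{f-s}[p]^p\le\varepsilon^p$, so everything reduces to bounding the mass of $f$ on the discarded intervals by $\varepsilon^p$.

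The heart of the argument is a local Hölder estimate that trades the extra integrability for a power of the interval length. On any interval $I_k$, applying Hölder with the conjugate pair $\delta,q$ gives
$$\|f\|_{L^p(I_k)}^p=\int_{I_k}|f|^p\le\Big(\int_{I_k}|f|^{p\delta}\Big)^{1/\delta}|I_k|^{1/q}=\|f\|_{L^{p\delta}(I_k)}^p\,|I_k|^{1/q}\le\norm{f}[p\delta]^p\,|I_k|^{1/q}.$$
For $k\notin\Lambda$ we have $|I_k|\le\varrho(\varepsilon)$, and there are at most $N$ such indices, so the discarded mass is at most $N\norm{f}[p\delta]^p\varrho(\varepsilon)^{1/q}$. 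The threshold $\varrho(\varepsilon)=N^{-q}\norm{f}[p\delta]^{-pq}\varepsilon^{pq}$ is calibrated exactly so that this quantity equals $\varepsilon^p$. Combining the two sums gives $\norm{f-\tilde s}[p]^p\le2\varepsilon^p$, i.e.\ the error bound $\norm{f-\tilde s}[p]\le2^{1/p}\varepsilon$ in \eqref{eq:intervalbound}.

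The only genuinely delicate point is the calibration of $\varrho(\varepsilon)$: one needs the aggregate mass of $f$ over the union of short intervals to stay below $\varepsilon^p$, and it is precisely the strict inequality $p\delta>p$ (equivalently $q<\infty$) that, through the local Hölder bound, turns shortness of $|I_k|$ into smallness of $\|f\|_{L^p(I_k)}$. The appearance of the factor $N^{-q}$ stems from the crude count $\#(\set{1,\ldots,N}\setminus\Lambda)\le N$ combined with the per-interval estimate; this is harmless, since lowering the threshold only enlarges $\Lambda$ and therefore cannot increase the error.
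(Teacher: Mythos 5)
Your proof is correct and follows essentially the same route as the paper's: the same choice of $\Lambda$ by thresholding $|I_k|$ against $\varrho(\varepsilon)$, the same splitting of $\norm{f-\tilde s}[p]^p$ over retained and discarded intervals, and the same H\"older estimate with exponents $\delta,q$ calibrated so that the discarded mass totals at most $\varepsilon^p$. No gaps; your additional remarks on the role of $q<\infty$ and the harmlessness of the crude count $\#(\set{1,\ldots,N}\setminus\Lambda)\le N$ are accurate but not needed.
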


\begin{proof}
    See \Cref{proof:smallest}.
\end{proof}

\begin{remark}\label{remark:excess}
    We can guarantee $f\in\Lp[p\delta]$ by assuming
    excess regularity and using Sobolev embeddings as follows.
    Let $\alpha>0$, $0<p<\infty$, $\delta>1$ and
    $\sobolev:=(\alpha+1/p)^{-1}$.
    Defining $\alpha_\delta>\alpha$ as
    \begin{align*}
        \alpha_\delta:=\alpha+\frac{\delta-1}{p\delta},
    \end{align*}
    we get that the Sobolev embedding number for the
    combination $\alpha_\delta$, $p\delta$ is
    \begin{align*}
        \sobolev_\delta:=(\alpha_\delta+1/(p\delta))^{-1}=(\alpha+1/p)^{-1}=\sobolev.
    \end{align*}
    Then, assuming $f\in\Bsqp[\alpha_\delta][\sobolev][\sobolev]$
    implies $f\in\Lp[p\delta]$.
\end{remark}

\begin{lemma}[Jackson Inequality for $B^{\alpha}_{\tau,\tau}$]\label{thm:jacksonbesov}
    Let $1\leq p<\infty$, $0<\sobolev<p$, $\alpha>1/\sobolev-1/p$,
    and assume
    $f\in\Bsqp[\alpha][\sobolev][\sobolev]$.
    Then, for any $\sigma>0$,
    we obtain the direct estimates
    \begin{align}\label{eq:bestfreeknot}
            E_n^{\mc N}(f)_p&\leq C\snorm{f}[{\Bsqp[\alpha][\sobolev][\sobolev]}]
        n^{-\frac{\alpha}{1+\sigma}},\\
        E_n^{\mc C}(f)_p&\leq C\snorm{f}[{\Bsqp[\alpha][\sobolev][\sobolev]}]
        n^{-\frac{\alpha}{2+\sigma}},\notag\\
                E_n^{\mc S}(f)_p&\leq C\snorm{f}[{\Bsqp[\alpha][\sobolev][\sobolev]}]
        n^{-\frac{\alpha}{1+\sigma}},\notag
    \end{align}
    where the constants $C$ depend on $\alpha>0$,
    $\sigma>0$, $b$ and $m$. In particular, they diverge to infinity
    as $\sigma\rightarrow 0$ or
     $\alpha\rightarrow 1/\sobolev-1/p$.
\end{lemma}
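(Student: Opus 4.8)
The plan is to approximate $f$ by a near-best $b$-adic free knot spline with $N$ pieces whose maximal knot level $d$ grows only logarithmically in $N$, and then to encode it as a tensorized polynomial via \Cref{cor:freeknot} and \Cref{cor:freeknotinterpolant}. The three stated rates then follow mechanically from the three complexity bounds $\cost_{\mc N}\lesssim dN$, $\cost_{\mc C}\lesssim dN^2$ and $\cost_{\mc S}\lesssim d^2N$ once $d\lesssim\log N$ is known: solving $n=\cost_\bullet$ for $N$ and using $\log N\leq C_\sigma N^\sigma$ converts $\|f-\tilde s\|_p\lesssim N^{-\alpha}$ into the exponents $\alpha/(1+\sigma)$, $\alpha/(2+\sigma)$ and $\alpha/(1+\sigma)$, respectively.

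First I would reduce to the critical Besov space for free knot spline approximation. Put $\sobolev':=(\alpha+1/p)^{-1}$; the hypothesis $\alpha>1/\sobolev-1/p$ is exactly $\sobolev'<\sobolev$, so the Besov embedding $\Bsqp[\alpha][\sobolev][\sobolev]\hookrightarrow B^{\alpha}_{\sobolev',\sobolev'}$ (valid on $[0,1)$ since $\sobolev>\sobolev'$) puts $f$ into the space to which \Cref{lemma:badicfree} applies. That lemma, used with a fixed degree $\bar m$ large enough that $\alpha<\bar m+1$, yields for every $N$ a spline $s\in\Sfreeb{N}{\bar m}$ with $\|f-s\|_p\leq\varepsilon_N:=CN^{-\alpha}\snorm{f}[{\Bsqp[\alpha][\sobolev][\sobolev]}]$. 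I would take $s$ to be built on the underlying adaptive $b$-adic partition into dyadic cells $[ib^{-l},(i+1)b^{-l})$; for such cells a length $b^{-l}$ forces all endpoints to level at most $l$, so the maximal knot level equals $d=\log_b(1/\min_k|I_k|)$, and controlling $d$ is the same as bounding the smallest cell from below.

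The crux is to cap $d$ by spending the excess smoothness $\alpha-(1/\sobolev-1/p)>0$. By \Cref{remark:excess} this strict gap lets me choose $\delta>1$, with conjugate $q$, so that $f\in\Lp[p\delta]$; as $\alpha\downarrow 1/\sobolev-1/p$ the admissible $\delta\downarrow 1$ and $q\uparrow\infty$. Applying \Cref{lemma:smallinterval} to $s$ with $\varepsilon=\varepsilon_N$ discards every cell shorter than $\varrho(\varepsilon_N)=N^{-q}\norm{f}[p\delta]^{-pq}\varepsilon_N^{pq}$, producing $\tilde s$ with $O(N)$ pieces, all surviving cells of length $>\varrho(\varepsilon_N)$, and $\|f-\tilde s\|_p\leq 2^{1/p}\varepsilon_N$. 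Since $\varepsilon_N\sim N^{-\alpha}$ we get $\varrho(\varepsilon_N)\sim N^{-q(1+\alpha p)}$, hence $d<\log_b(1/\varrho(\varepsilon_N))\sim q(1+\alpha p)\log_b N\lesssim\log N$, the constant depending on $q$ and thus diverging as $\alpha\downarrow 1/\sobolev-1/p$. The gaps left by the removed cells are bounded by endpoints of surviving coarse cells, so $\tilde s$ is still a $b$-adic free knot spline of level $d$, and after local interpolation $\interpol[b,\bar d][m]\tilde s\in\Vbd[@][\bar d]{m}$.

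Finally I would read off the complexities. If $\bar m\leq m$ the spline lies in $\Vbd{m}$ exactly and \Cref{cor:freeknot} gives $\cost_{\mc N}(\tilde s)\lesssim dN$, $\cost_{\mc C}(\tilde s)\lesssim dN^2$, $\cost_{\mc S}(\tilde s)\lesssim d^2N$; if $\bar m>m$ I would use the interpolant and \Cref{cor:freeknotinterpolant}, whose extra contributions $C'(\bar d-d)=O(\log N)$ are of lower order and leave the rates unchanged. Inserting $d\lesssim\log N\leq C_\sigma N^\sigma$ bounds $\cost_{\mc N},\cost_{\mc S}\lesssim N^{1+\sigma}$ and $\cost_{\mc C}\lesssim N^{2+\sigma}$; inverting these and substituting $\|f-\tilde s\|_p\lesssim N^{-\alpha}\snorm{f}[{\Bsqp[\alpha][\sobolev][\sobolev]}]$ produces \eqref{eq:bestfreeknot}, with $C\to\infty$ both as $\sigma\to 0$ (through $C_\sigma$) and as $\alpha\to 1/\sobolev-1/p$ (through $q$). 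The main obstacle is precisely this level control: one must combine \Cref{lemma:badicfree} and \Cref{lemma:smallinterval} so as to preserve the accuracy $N^{-\alpha}$ while forcing $d=O(\log N)$, which works only because for $b$-adic partitions ``large cell'' means ``coarse level'' and because the excess integrability $f\in\Lp[p\delta]$ renders the deeply refined small cells negligible in $L^p$.
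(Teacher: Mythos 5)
Your strategy coincides with the paper's: \Cref{lemma:badicfree} for a near-best $b$-adic free knot spline of degree $\bar m$ with $\bar m+1>\alpha$, \Cref{remark:excess} plus \Cref{lemma:smallinterval} to discard cells shorter than $\varrho(\varepsilon_N)$ and cap the level at $d\lesssim q(1+\alpha p)\log_b N$, then the complexity bounds of \Cref{cor:freeknot} and \Cref{cor:freeknotinterpolant} with $\log N\leq C_\sigma N^\sigma$; you even identify correctly both sources of divergence of the constants. However, there is a genuine gap at the degree-reduction step, which is the technical core of the paper's proof and is absent from yours. In the main case $\alpha\geq m+1$ you are forced to take $\bar m>m$, so the object you actually place in the tool is $\interpol[b,\bar d][m]\tilde s\in\Vb{m}$, not $\tilde s$; but you only ever bound the \emph{error} of $\tilde s$ and the \emph{complexity} of $\interpol[b,\bar d][m]\tilde s$. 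You never specify $\bar d$, and you never show that some $\bar d=O(\log N)$ makes $\norm{\tilde s-\interpol[b,\bar d][m]\tilde s}[p]\lesssim N^{-\alpha}\snorm{f}[{\Bsqp[\alpha][\sobolev][\sobolev]}]$; \Cref{cor:freeknotinterpolant} is purely a complexity statement and gives no accuracy. The paper proves exactly this missing estimate: combining \eqref{property-local-interpolation} with a Markov-type inequality for the degree-$\bar m$ pieces ($\norm{s_j^{(m+1)}}[p]\lesssim\norm{s_j}[p]$), the near-best local approximation property ($\norm{s_j}[p]\lesssim\snorm{f_j}[{\Bsqp[\alpha][\sobolev][\sobolev]}]$), and summation over the $b^d$ cells, it obtains $\norm{s-\tilde s}[p]\lesssim b^{-(\bar d-d)(m+1)}b^{d/p}\snorm{f}[{\Bsqp[\alpha][\sobolev][\sobolev]}]$, and must then choose $\bar d=\lceil(d(m+1+1/p)+\alpha\log_b N)/(m+1)\rceil$. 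Note the factor $b^{d/p}$: it is only because $d\lesssim\log N$ (the part you did argue) that this $\bar d$ remains $O(\log N)$. Without this analysis, your assertion that the interpolant's "extra contributions are of lower order and leave the rates unchanged" is a claim about cost, not about error, and the proof does not close.

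A secondary caveat concerns your level cap itself. You justify "large cell means coarse level" by declaring that the spline from \Cref{lemma:badicfree} is built on a partition into $b$-adic \emph{cells} $[ib^{-l},(i+1)b^{-l})$. The lemma does not provide this: its knot-selection argument produces intervals with $b$-adic \emph{endpoints}, and such an interval can be long yet have an endpoint of arbitrarily deep level (e.g.\ $[0,\tfrac12+2^{-100})$ for $b=2$), so length $>\varrho$ does not bound the knot level. To be fair, the paper elides the same point with a "w.l.o.g."; closing it rigorously requires either redoing the knot selection of \Cref{lemma:badicfree} over adaptively refined $b$-adic cells, or a knot-rounding argument (move each surviving knot to a coarser $b$-adic point at distance $\ll\varrho$ and absorb the perturbation using $\norm{P}[{L^\infty(I)}]\lesssim |I|^{-1/p}\norm{P}[{L^p(I)}]$ for the polynomial pieces). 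Similarly, your opening reduction $\Bsqp[\alpha][\sobolev][\sobolev]\hookrightarrow\Bsqp[\alpha][\sobolev'][\sobolev']$ lowers the secondary index at equal smoothness, which goes the wrong way among the elementary inclusions the paper lists; the paper performs the same reduction silently when it applies \Cref{lemma:badicfree} with the super-critical norm, so this is a shared, not a distinguishing, imprecision.
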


\begin{proof}
    See \Cref{proof:lemmafree}.
\end{proof}

\begin{theorem}[{Direct Embedding for $\Bsqp[@][\sobolev][\sobolev]$}]\label{cor:besovspaces}
    Let $1\leq p<\infty$, $0<\sobolev<p$ and $r>1/\sobolev-1/p$.
    Then,
    for any $\sigma>0$,
    \begin{align*}
            \Bsqp[r][\sobolev][\sobolev]
            \hookrightarrow N^{r/(1+\sigma)}_\infty(\Lp),\quad
            \Bsqp[r][\sobolev][\sobolev]
            \hookrightarrow C^{r/(2+\sigma)}_\infty(\Lp),\quad
            \Bsqp[r][\sobolev][\sobolev]
            \hookrightarrow S^{r/(1+\sigma)}_\infty(\Lp),
    \end{align*}
    and
    \begin{alignat*}{2}
       (\Lp, \Bsqp[r][\sobolev][\sobolev])_{\alpha(1+\sigma)/r,q}
       &\hookrightarrow
       N_q^{\alpha}(\Lp),\quad &&0<\alpha<r/(1+\sigma),\\
       (\Lp, \Bsqp[r][\sobolev][\sobolev])_{\alpha(2+\sigma)/r,q}
       &\hookrightarrow
       C_q^{\alpha}(\Lp),\quad &&0<\alpha<r/(2+\sigma),\\
       (\Lp, \Bsqp[r][\sobolev][\sobolev])_{\alpha(1+\sigma)/r,q}
       &\hookrightarrow
       S_q^{\alpha}(\Lp),\quad &&0<\alpha<r/(1+\sigma).
    \end{alignat*}
\end{theorem}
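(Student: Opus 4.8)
The plan is to prove this exactly as \Cref{thm:sobolev} was obtained from \Cref{thm:jacksonfixed}: combine a Jackson (direct) inequality on $Y=\Bsqp[r][\sobolev][\sobolev]$ with the abstract interpolation--approximation machinery of \Cref{thm:intapp}. All of the genuine analytic content is already packaged in \Cref{thm:jacksonbesov}, so the argument here is essentially a bookkeeping of exponents.

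First I would fix $\sigma>0$ and apply \Cref{thm:jacksonbesov} with smoothness order $r$ in place of $\alpha$. Since $1\le p<\infty$, $0<\sobolev<p$ and $r>1/\sobolev-1/p$, its hypotheses are met, and it yields for every $f\in\Bsqp[r][\sobolev][\sobolev]$ the estimates $E_n^{\mc N}(f)_p, E_n^{\mc S}(f)_p\le C\snorm{f}[{\Bsqp[r][\sobolev][\sobolev]}]\, n^{-r/(1+\sigma)}$ and $E_n^{\mc C}(f)_p\le C\snorm{f}[{\Bsqp[r][\sobolev][\sobolev]}]\,n^{-r/(2+\sigma)}$. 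These are precisely Jackson inequalities of the form \eqref{eq:jackson} with $X=\Lp$, $Y=\Bsqp[r][\sobolev][\sobolev]$ and Jackson exponent $\rJ=r/(1+\sigma)$ for the tools $\tool^{\mc N},\tool^{\mc S}$ and $\rJ=r/(2+\sigma)$ for $\tool^{\mc C}$; the right-hand side already involves only the Besov semi-norm, which matches the form \eqref{eq:jackson}.

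Next I would invoke \Cref{thm:intapp}. Its hypotheses on the approximation family are \ref{P1}, \ref{P3}, \ref{P4}, which hold for $N^\alpha_q$, $C^\alpha_q$, $S^\alpha_q$ as recorded at the start of \Cref{sec:dirinv}. The endpoint conclusion $Y\hookrightarrow\Asq[\rJ][\infty](X)$ then gives the three embeddings of $\Bsqp[r][\sobolev][\sobolev]$ into $N^{r/(1+\sigma)}_\infty(\Lp)$, $C^{r/(2+\sigma)}_\infty(\Lp)$ and $S^{r/(1+\sigma)}_\infty(\Lp)$. The interpolation conclusion $\XY[\alpha/\rJ][q]\hookrightarrow\Asq(X)$, valid for $0<\alpha<\rJ$, yields the second group once I substitute the two values of $\rJ$: for the $\mc N$ and $\mc S$ tools $\alpha/\rJ=\alpha(1+\sigma)/r$ with admissible range $0<\alpha<r/(1+\sigma)$, and for the $\mc C$ tool $\alpha/\rJ=\alpha(2+\sigma)/r$ with range $0<\alpha<r/(2+\sigma)$, matching the asserted embeddings.

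The only points needing care --- and the nearest thing to an obstacle at this level --- are formal. The interpolation half of \Cref{thm:intapp} is stated for $0<q<\infty$, so the case $q=\infty$ is covered either by the endpoint embedding into $\Asq[\rJ][\infty]$ together with the monotonicity $\XY[\theta][q_1]\hookrightarrow\XY[\theta][q_2]$ for $q_1\le q_2$, or by appealing to the standard $q=\infty$ version of the direct interpolation theorem. I would also stress that $\sigma>0$ is arbitrary throughout, so that $r/(1+\sigma)\uparrow r$ and $r/(2+\sigma)\uparrow r/2$ as $\sigma\to0$ --- at the price of the blowing-up constants flagged in \Cref{thm:jacksonbesov} --- which is what produces the ``near to optimal'' rates. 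Everything substantive, namely the free-knot-spline encoding of \Cref{cor:freeknot} and \Cref{cor:freeknotinterpolant} and the $b$-adic/smallest-interval reductions (\Cref{lemma:badicfree}, \Cref{lemma:smallinterval}) behind \Cref{thm:jacksonbesov}, has already been carried out, so no further estimates are required.
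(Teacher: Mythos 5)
Your proposal is correct and follows exactly the paper's own route: the paper's proof is the one-line citation of \Cref{thm:intapp}, \Cref{remark:excess} and \Cref{thm:jacksonbesov}, i.e., precisely the Jackson-inequality-plus-interpolation bookkeeping you carry out (with the excess-integrability point of \Cref{remark:excess} already absorbed into the hypotheses $r>1/\sobolev-1/p$ of \Cref{thm:jacksonbesov}). Your additional care about the $q=\infty$ case and the verification of \ref{P1}, \ref{P3}, \ref{P4} only makes explicit what the paper leaves implicit.
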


\begin{proof}
    Follows from \Cref{thm:intapp}, \Cref{remark:excess}
    and \Cref{thm:jacksonbesov}.
\end{proof}

\subsubsection{Analytic Functions}

It is well known that analytic functions can be approximated
by algebraic polynomials with a rate exponential in the
degree of the approximating polynomials:
see, e.g., \cite[Chapter 7, Theorem 8.1]{DeVore93}.
In our setting, the polynomial degree in $\Vb{m}$ is fixed.
However, as before we can re-interpolate and
consider the corresponding approximation rate.
First, we show that polynomials can be approximated with an exponential
rate.

\begin{lemma}[Approximation Rate for Polynomials]\label{lemma:polys}
    Let $P\in\P_{\bar{m}}$ be an arbitrary polynomial with
    $\bar{m}>m$ (otherwise we have exact representation).
    Then, for $1\leq p\leq\infty$
    \begin{align*}
        E_n^{\mc N}(P)_p&\leq 
        C
        b^{-\frac{m+1}{(\bar{m}+1)}n}        
        \norm{P^{(m+1)}}[p],\\
       E_n^{\mc S}(P)_p&\leq E_n^{\mc C}(P)_p\leq 
        C
        b^{-\frac{m+1}{b(\bar{m}+1)^2}n}        
        \norm{P^{(m+1)}}[p],
    \end{align*}
    with $C$ independent of $\m$.
\end{lemma}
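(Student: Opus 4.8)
The plan is to approximate $P$ by its degree-$m$ local interpolant $\mathcal{I}_{b,d,m} P \in V_{b,d,m}$ at a suitable level $d$, and then to trade the interpolation error off against the encoding complexity by optimizing over $d$ for each complexity budget $n$. Both ingredients are already available: the error is governed by the re-interpolation estimate of \Cref{lemma:reinterpol}, and the complexity by \Cref{prop:encoding-poly-interpolant}.

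For the error, note that since $P$ is a polynomial it belongs to $W^{m+1,p}([0,1))$ for every $m$, with $\vert P\vert_{W^{m+1,p}} = \Vert P^{(m+1)}\Vert_p$. Applying the first estimate of \Cref{lemma:reinterpol} with the interpolation degree set to $m$ (i.e.\ using $P\in W^{m+1,p}$) gives
\begin{align*}
\Vert P - \mathcal{I}_{b,d,m} P\Vert_p \le C_0\, b^{-d(m+1)}\,\Vert P^{(m+1)}\Vert_p,
\end{align*}
where $C_0$ depends only on $m$ and $p$ — crucially \emph{not} on $\bar m$. This is the key point behind the assertion that the final constant is independent of $\bar m$.

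For the complexity, \Cref{prop:encoding-poly-interpolant} bounds the cost of encoding $\mathcal{I}_{b,d,m} P$ within $V_{b,m}$ by $\cost_{\mc N}(\mathcal{I}_{b,d,m} P) \le (\bar m+1)d$ and $\cost_{\mc C}(\mathcal{I}_{b,d,m} P) \le b(\bar m+1)^2 d + b(m+1)$. Given a budget $n$, I would choose $d$ as large as the budget allows, namely $d = \lfloor n/(\bar m+1)\rfloor$ for the $\mc N$-measure and $d = \lfloor (n - b(m+1))/(b(\bar m+1)^2)\rfloor$ for the $\mc C$-measure, so that in each case the interpolant lies in the corresponding admissible set $\Phi_n^{\mc N}$, resp.\ $\Phi_n^{\mc C}$. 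Substituting these choices into the error bound and absorbing the $O(1)$ factors produced by the floor and by the additive term $b(m+1)$ yields
\begin{align*}
E_n^{\mc N}(P)_p \le C\, b^{-\frac{m+1}{\bar m+1} n}\,\Vert P^{(m+1)}\Vert_p, \qquad
E_n^{\mc C}(P)_p \le C\, b^{-\frac{m+1}{b(\bar m+1)^2} n}\,\Vert P^{(m+1)}\Vert_p.
\end{align*}
The $\mc S$-bound then follows for free from $\cost_{\mc S} \le \cost_{\mc C}$, which gives $\Phi_n^{\mc C}\subset\Phi_n^{\mc S}$ and hence $E_n^{\mc S}(P)_p \le E_n^{\mc C}(P)_p$.

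There is no genuine obstacle here; the statement is a direct consequence of the already-established error and complexity estimates. The only points requiring care are bookkeeping: first, using degree-$m$ rather than degree-$\bar m$ interpolation, so that $C_0$ — and thus the final constant — is independent of $\bar m$; and second, checking that the constant factors generated when inverting the complexity-versus-$d$ relation stay bounded uniformly in $\bar m$. The latter holds precisely because $\bar m > m$: the floor contributes a factor $b^{m+1}$, while the additive term $b(m+1)$ in the $\mc C$-cost contributes a factor $b^{(m+1)^2/(\bar m+1)^2}\le b$, both independent of $\bar m$.
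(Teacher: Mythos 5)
Your proposal is correct and follows essentially the same route as the paper's own proof: approximate $P$ by the degree-$m$ local interpolant $\mathcal{I}_{b,d,m}P$, bound the error by $C\,b^{-d(m+1)}\norm{P^{(m+1)}}[p]$ via \Cref{lemma:reinterpol}, bound the encoding complexity via \Cref{prop:encoding-poly-interpolant} (the paper invokes \Cref{ranks-polynomials}, which gives the same bounds), and invert the complexity--level relation to get the exponential rate, with the $\mc S$-bound following from $\cost_{\mc S}\le\cost_{\mc C}$. Your explicit verification that the constants stay uniform in $\bar m$ is a welcome addition that the paper leaves implicit.
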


\begin{proof}
 See \Cref{proof:polys}.
\end{proof}

This implies analytic functions can be approximated
with an error decay of exponential type.
For the following statement we require
the distance function
\begin{align*}
    \dist(z, D):=\inf_{w\in D}|z-w|,\quad z\in\Cc,\quad D\subset\Cc.
\end{align*}

\begin{theorem}[Approximation Rate for Analytic Functions]\label{thm:analytic}
    Let $\rho>1$ and define
    \begin{align*}
        D_\rho:=
        \set{z\in\C:\dist(z, [0,1])<\frac{\rho-1}{2}}.
    \end{align*}
    Let $\rho:=\rho(f)>1$
    be such that $f:[0,1)\rightarrow\R$ has
    an analytic extension onto
    $D_\rho\subset\Cc$, but not onto any
    $D_{\tilde{\rho}}$ for $\tilde{\rho}>\rho$.
    Then,
    \begin{align}\label{eq:resultanal}
      E_n^{\mc N}(f)_\infty&\leq C[\min(\rho, b^{ {(m+1)}})]^{-n^{1/2}},\\
      E_n^{\mc S}(f)_\infty&\leq E_n^{\mc C}(f)_\infty\leq C[\min(\rho, b^{ {(m+1)}/b})]^{-n^{1/3}},\notag
    \end{align}
    where $C=C(f, m, b, \rho)$.
\end{theorem}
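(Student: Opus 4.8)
The plan is to combine the classical exponential approximation of analytic functions by algebraic polynomials with \Cref{lemma:polys}, which controls how cheaply a polynomial of degree $\bar m$ can be encoded by a tensor network of fixed degree $m$. For any degree $\bar m>m$ and any complexity budget $n$, a near-best polynomial approximant $P_{\bar m}\in\P_{\bar m}$ to $f$ gives, via the triangle inequality,
\begin{align*}
    E_n^{\mc N}(f)_\infty\leq\norm{f-P_{\bar m}}[\infty]+E_n^{\mc N}(P_{\bar m})_\infty,
\end{align*}
and analogously for the $\mc C$ and $\mc S$ complexities. The first term I would control by spectral approximation, the second by \Cref{lemma:polys}, and then I would optimize the free parameter $\bar m=\bar m(n)$ to balance the two contributions.

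First I would invoke the classical estimate for analytic functions (see \cite[Chapter 7, Theorem 8.1]{DeVore93}): since $f$ extends analytically to $D_\rho$, there is a constant $C=C(f,\rho)$ and, for each $\bar m$, a polynomial $P_{\bar m}\in\P_{\bar m}$ with $\norm{f-P_{\bar m}}[\infty]\leq C\rho^{-\bar m}$. Crucially, I also need a bound on the factor $\norm{P_{\bar m}^{(m+1)}}[\infty]$ appearing in \Cref{lemma:polys} that does \emph{not} grow with $\bar m$. This I would obtain from the fact that $f^{(m+1)}$ is itself analytic on $D_\rho$ and that a spectral (e.g.\ Chebyshev) approximant to $f$ approximates $f^{(m+1)}$ at an exponential rate on every slightly smaller neighborhood; hence
\begin{align*}
    \norm{P_{\bar m}^{(m+1)}}[\infty]\leq\norm{f^{(m+1)}}[\infty]+\norm{f^{(m+1)}-P_{\bar m}^{(m+1)}}[\infty]\leq C(f,m)
\end{align*}
uniformly in $\bar m$. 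With this, \Cref{lemma:polys} yields $E_n^{\mc N}(P_{\bar m})_\infty\leq C(f,m)\,b^{-\frac{m+1}{\bar m+1}n}$ and $E_n^{\mc S}(P_{\bar m})_\infty\leq E_n^{\mc C}(P_{\bar m})_\infty\leq C(f,m)\,b^{-\frac{m+1}{b(\bar m+1)^2}n}$.

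It then remains to optimize $\bar m$ (which exceeds $m$ once $n$ is large enough). For the $\mc N$-complexity I would set $\bar m+1\sim n^{1/2}$, which makes both exponents of order $n^{1/2}$: the first term becomes $\rho^{-cn^{1/2}}$ and the second $(b^{m+1})^{-cn^{1/2}}$, so that, absorbing constants,
\begin{align*}
    E_n^{\mc N}(f)_\infty\leq C\rho^{-n^{1/2}}+C(b^{m+1})^{-n^{1/2}}\leq 2C[\min(\rho,b^{m+1})]^{-n^{1/2}},
\end{align*}
using $\min(\rho,b^{m+1})\le\rho$ and $\le b^{m+1}$ in the last step. For the $\mc C$- and $\mc S$-complexities (via $E_n^{\mc S}\leq E_n^{\mc C}$) the second exponent scales like $n/(\bar m+1)^2$, so the balancing choice is $\bar m+1\sim n^{1/3}$, giving first term $\rho^{-cn^{1/3}}$ and second term $(b^{(m+1)/b})^{-cn^{1/3}}$, hence
\begin{align*}
    E_n^{\mc C}(f)_\infty\leq C[\min(\rho,b^{(m+1)/b})]^{-n^{1/3}}.
\end{align*}
The main obstacle is the uniform-in-$\bar m$ bound on $\norm{P_{\bar m}^{(m+1)}}[\infty]$: without it the polynomial prefactor in \Cref{lemma:polys} could grow (e.g.\ polynomially, via a Markov inequality) and spoil the exponential decay. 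Matching the balancing constant so that the base comes out exactly as $\min(\rho,b^{m+1})$, respectively $\min(\rho,b^{(m+1)/b})$, rather than some smaller quantity is the only remaining point requiring care, and it is handled by taking the proportionality constant in $\bar m(n)$ equal to one (so that the two exponential bases are precisely $\rho$ and $b^{m+1}$, resp.\ $\rho$ and $b^{(m+1)/b}$).
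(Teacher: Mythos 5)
Your proposal is correct and follows essentially the same route as the paper: approximate $f$ by a near-best (Chebyshev-type) polynomial of degree $\bar m$, encode it at fixed degree $m$ via local interpolation (which is exactly what \Cref{lemma:polys} packages), and balance $\bar m+1\sim n^{1/2}$ for $\cost_{\mc N}$ and $\bar m+1\sim n^{1/3}$ for $\cost_{\mc C},\cost_{\mc S}$. The one step you leave informal---the uniform-in-$\bar m$ bound on $\norm{P_{\bar m}^{(m+1)}}[\infty]$---is precisely the point the paper settles explicitly, by taking $P_{\bar m}$ to be the Chebyshev partial sum and combining the exponential decay of the Chebyshev coefficients with the standard bound on $\norm{C_k^{(m+1)}}[\infty]$.
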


\begin{proof}
    See \Cref{proof:analytic}.
\end{proof}

\begin{remark}
    The above estimate can be further refined in the following ways:
    \begin{itemize}
        \item    The factor in the base of the exponent can be replaced by any
                    number $\theta$
                    \begin{align*}
                        \min(\rho, b^{(m+1)/b})<\theta<\max(\rho, b^{(m+1)/b}),
                    \end{align*}
                    with an adjusted constant $C$.
        \item    The inequality \eqref{eq:resultanal} can be stated
                    in the form as in \cite[Chapter 7, Theorem 8.1]{DeVore93}
                    to explicitly include the case $\rho=\infty$.
        \item    One can define classes of entire functions as
                    in \cite[Chapter 7, Theorem 8.3]{DeVore93} for a finer distinction of
                    functions that can be approximated with an exponential-type
                    rate.
        \item    One can extend the result to approximation of analytic functions
                    with singularities applying similar ideas as in \cite{Kazeev2017}.
    \end{itemize}
\end{remark}

%%%%%%%%%%%%%%%%%%%%%%%%%%%%%%%%%%%%%%%%
%%%%%%%%%%%%%%%%%%%%%%%%%%%%%%%%%%%%%%%%
%%%%%%%%%%%%%%%%%%%%%%%%%%%%%%%%%%%%%%%%
%%%%%%%%%%%%%%%%%%%%%%%%%%%%%%%%%%%%%%%%
%%%%%%%%%%%%%%%%%%%%%%%%%%%%%%%%%%%%%%%%

\subsection{Inverse Estimates}

It is well known in tensor approximation of high-dimensional
functions and approximation with
neural networks (see \cite{Gribonval2019}) that highly
irregular functions can in some cases be
approximated or even represented exactly with low or constant rank or
complexity\footnote{Think of a rank-one tensor product of jump functions.}.
This fact is reflected in the lack of inverse estimates
for tensorized approximation of one-dimensional functions
as the next statement shows.

\begin{theorem}[No Inverse Embedding]\label{thm:noinverse}
    For any $\alpha>0$, $0<p,q\leq\infty$ and any $\tilde{\alpha}>0$
    \begin{align*}
      C^\alpha_q(\Lp)\not\hookrightarrow\Bsqp[\tilde{\alpha}].
    \end{align*}
\end{theorem}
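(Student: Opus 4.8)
The plan is to exhibit, for any prescribed $\tilde\alpha>0$, a single function $\varphi$ that lies in $C^\alpha_q(\Lp)$ but has no Besov smoothness of order $\tilde\alpha$, i.e. $\varphi\notin\Bsqp[\tilde\alpha]$. Since $C^\alpha_q(\Lp)\hookrightarrow S^\alpha_q(\Lp)\hookrightarrow N^\alpha_q(\Lp)$ by \Cref{comparing-spaces}, and the finest (smallest) of these is $C^\alpha_q$, it suffices to construct $\varphi$ with small $\cost_{\mc C}$-complexity and arbitrarily poor Besov regularity. The guiding intuition, already flagged in the paragraph preceding the statement, is that a highly irregular function can have very low rank: think of a rank-one tensor built from characteristic or oscillatory functions. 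The tensorization machinery turns a product structure across the $d$ coordinate directions into something that, viewed back on $[0,1)$, is a fine-scale piecewise function with $O(d)$ complexity but very rough behaviour.

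Concretely, I would first pick a \emph{generating profile} realized exactly in $\Vbd{m}$ with a tensor of the form $\tensor\varphi = v_1\otimes\cdots\otimes v_d\otimes g$ (a rank-one tensor, so every $r_\nu=1$), choosing each $v_\nu\in\R^{\Ib}$ to encode a jump or sign pattern. By the rank-one structure and the complexity formulas \eqref{eq:complex}, $\cost_{\mc C}(\varphi_d)\le b\cdot d + b(m+1)=O(d)$, so $\varphi_d\in\tool[Cd]^{\mc C}$ for a constant $C$. The key quantitative point is to track how the $\Lp$-error of best approximation $E_n^{\mc C}(\varphi)_p$ behaves for the limiting object $\varphi$, and simultaneously how the Besov seminorm $\snorm{\varphi_d}[{\Bsqp[\tilde\alpha]}]$ blows up in $d$. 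The first step, then, is a careful choice of the $v_\nu$ so that the resulting univariate functions $\varphi_d$ converge in $\Lp$ to a limit $\varphi$ with $\varphi\in C^\alpha_q(\Lp)$ (which follows once $E_n^{\mc C}(\varphi)_p$ decays at least like $n^{-\alpha}$), while the modulus of smoothness $\mod_r(\varphi_d,t)_p$ grows uncontrollably as the scale $t\to0$, forcing $\snorm{\varphi}[{\Bsqp[\tilde\alpha]}]=\infty$.

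The core argument I would organize as follows. First, exhibit the family $\varphi_d$ of rank-one tensorized functions and bound $\cost_{\mc C}(\varphi_d)=O(d)$, so that they sit in the approximation tool at linear-in-$d$ complexity. Second, show that the $\Lp$ best-approximation errors of the limit $\varphi=\lim_d\varphi_d$ decay fast enough that $\norm{\varphi}[{C^\alpha_q}]<\infty$ for \emph{every} $\alpha>0$; because the complexity grows only linearly while the functions differ at exponentially fine scales $b^{-d}$, each truncation $\varphi_d$ approximates $\varphi$ to accuracy $O(b^{-d/p})$ at cost $O(d)$, giving super-polynomial (in fact exponential) decay of $E_n^{\mc C}(\varphi)_p$ and hence membership in all $C^\alpha_q(\Lp)$. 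Third, compute or lower-bound the Besov seminorm directly from the difference operator: because $\varphi$ oscillates or jumps on the scale $b^{-d}$ for arbitrarily large $d$, the quantity $t^{-\tilde\alpha}\mod_r(\varphi,t)_p$ is unbounded as $t\to0$, so $\varphi\notin\Bsqp[\tilde\alpha]$ for any $\tilde\alpha>0$.

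The main obstacle will be the third step: making the blow-up of the Besov seminorm rigorous while keeping the $\Lp$-convergence (and thus the fast decay of $E_n^{\mc C}$) intact. These two demands pull in opposite directions — fast $\Lp$ decay wants the fine-scale oscillations to have small amplitude, whereas non-membership in $\Bsqp[\tilde\alpha]$ wants the modulus of smoothness at scale $t\sim b^{-d}$ to be large relative to $t^{\tilde\alpha}$. The resolution is that the Besov seminorm weights the modulus by $t^{-\tilde\alpha}$, so even amplitudes decaying polynomially in $d$ (hence giving summable, indeed fast-decaying, $\Lp$ tails) can produce a divergent weighted modulus because $t^{-\tilde\alpha}=b^{d\tilde\alpha}$ grows exponentially in $d$ against the fixed fine scale. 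Thus I would choose the amplitude of the $d$-th fine-scale feature to decay like $b^{-d\beta/p}$ for some $\beta$ with $\alpha<\beta$ yet \emph{independent} of $\tilde\alpha$, and then verify that for any $\tilde\alpha>0$ the contribution at scale $t\sim b^{-d}$ behaves like $b^{d(\tilde\alpha-\beta)}$ summed over $d$, which diverges as $d\to\infty$ once we note the relevant comparison. Pinning down these exponents so a \emph{single} $\varphi$ simultaneously defeats \emph{every} $\tilde\alpha$ is the delicate point, and I would handle it by a diagonal construction superposing features at all scales with carefully tuned amplitudes.
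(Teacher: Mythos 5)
Your strategy---constructing a single multi-scale function lying in $C^\alpha_q(\Lp)$ but outside $\Bsqp[\tilde{\alpha}]$---is genuinely different from the paper's, and the difference matters. The paper never passes to a limit function at all: it keeps the whole sawtooth family $\varphi_d$ (rank $2$, $\cost_{\mc C}(\varphi_d)\leq 8d+2m+2$, and $\norm{\varphi_d}[p]^p=1/(p+1)$ for every $d$), observes that properties \ref{P1}--\ref{P4} automatically force the Bernstein-type bound $\norm{\varphi}[{C^\alpha_q}]\leq Cn^\alpha\norm{\varphi}[p]$ for every $\varphi\in\Phi^{\mc C}_n$ (by \cite[Chapter 7, Theorem 9.3]{DeVore93}), and combines this with the lower bound $\norm{\varphi_d}[{\Bsqp[\tilde{\alpha}]}]\gtrsim 2^{\tilde{\alpha}d}$ from \cite[Lemma 5.12]{Gribonval2019}. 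Taking $d\sim n/8$, a continuous embedding would yield $n^\alpha\gtrsim 2^{\tilde{\alpha}n/8}$, a contradiction. So the uniform norm inequality implied by the embedding is violated along a family of uniformly bounded, increasingly oscillatory splines; no element of $C^\alpha_q(\Lp)\setminus\Bsqp[\tilde{\alpha}]$ is ever exhibited (its existence follows abstractly, but is not needed). Your route must actually produce such an element, which is strictly harder, and this is where your proposal breaks down.

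The concrete gap is your third step, and the exponents you propose do not close it. First, with amplitudes $a_d\sim b^{-d\beta/p}$ for a \emph{fixed} $\beta$ independent of $\tilde{\alpha}$, the scale-$d$ contribution to the weighted modulus is of order $b^{d(\tilde{\alpha}-c\beta)}$ for a fixed $c>0$: this diverges only when $\tilde{\alpha}$ exceeds the fixed decay rate, while for smaller $\tilde{\alpha}$ the same computation shows the Besov seminorm is \emph{finite}. Since $\Bsqp[\tilde{\alpha}_1]\subset\Bsqp[\tilde{\alpha}_2]$ for $\tilde{\alpha}_1\geq\tilde{\alpha}_2$, defeating large $\tilde{\alpha}$ says nothing about small $\tilde{\alpha}$, which is exactly the hard case; your claim that the sum ``diverges for any $\tilde{\alpha}>0$'' is therefore false as stated. (Note also that the theorem only quantifies per pair $(\alpha,\tilde{\alpha})$, so insisting on one function defeating every $\tilde{\alpha}$ makes the task needlessly harder.) Second, even for a fixed $\tilde{\alpha}$, your lower bound on the modulus of the superposition must survive cross-scale interference. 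The only generic tool is the (quasi-)triangle inequality
\begin{align*}
\mod_r(\varphi, b^{-D})_p \;\gtrsim\; a_D \,\mod_r(\varphi_D, b^{-D})_p \;-\; \sum_{d<D} a_d \,\mod_r(\varphi_d, b^{-D})_p \;-\; 2^r \sum_{d>D} a_d \norm{\varphi_d}[p],
\end{align*}
and whenever the $a_d$ decay geometrically with ratio close to $1$ (which small $\tilde{\alpha}$ forces in your scheme) the tail $\sum_{d>D}a_d$ is comparable to or larger than $a_D$, so the right-hand side gives nothing. Repairing this requires either a lacunary choice of scales (features only at levels $d_k$ growing geometrically, so that $\sum_{j>k}a_{d_j}\ll a_{d_k}$) or replacing the modulus by a wavelet-coefficient characterization of $\Bsqp[\tilde{\alpha}]$ in which scales decouple; your ``diagonal construction'' is precisely this missing proof, not a detail to be filled in later. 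A minor further slip: the partial sums $\sum_{d\le D}a_d\varphi_d$ have ranks $O(D)$, hence $\cost_{\mc C}=O(D^3)$ rather than $O(D)$; this still yields super-polynomial error decay, but the accounting as written conflates the cost of a single $\varphi_d$ with that of the partial sum.
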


\begin{proof}
    For ease of notation we restrict ourselves to
    $b=2$, but the same arguments apply
    for any $b\geq 2$.
    The proof boils down to finding a counterexample of
    a function that can be efficiently represented within $\Vb{m}$
    but has ``bad'' Besov regularity. To this end,
    we use the \emph{sawtooth} function, see
    \cite{telgarsky2015} and \Cref{fig:sawtooth}.
    
    \begin{figure}[h]
        \centering
        \includegraphics[scale=.5]{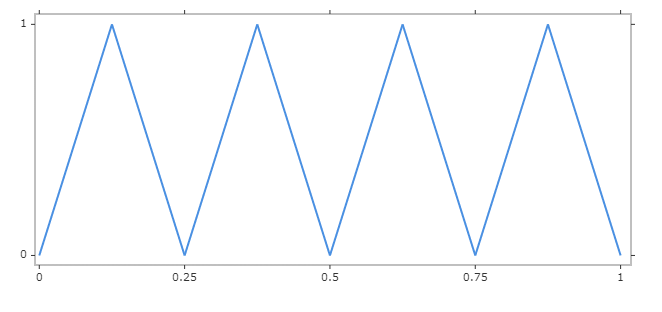}
        \caption{``Sawtooth'' function.}
        \label{fig:sawtooth}
    \end{figure}
    
    Specifically, consider the linear functions
    \begin{align*}
        \psi_1(y):=y,\quad\psi_2(y):=1-y,\quad 0\leq y<1.
    \end{align*}
    For arbitrary $d\in\N$, set
    \begin{align*}
        \tensor{\varphi}_d(i_1,\ldots,i_d,y):= %\bs 1(i_1)\cdots\bs 1(i_{d-1})\cdot
        \delta_{0}(i_d)\psi_1(y)+\delta_{1}(i_d)\psi_2(y).
    \end{align*}
    Then, $\varphi_d = T_{b,d}^{-1}  \tensor{\varphi}_d \in\Vbd[2]{m}$ with $r_\nu(\varphi_d) = 2$ for all $1\leq \nu\leq d$.
    Thus,
    \begin{align}\label{eq:phicost}
       \cost_{\mc C}(\varphi_d)\leq 8d+2m +2. 
    \end{align}
    
    We can compute the $\Lp$-norm of $\varphi_d$ as
    \begin{align}\label{eq:philpnorm}
        \norm{\varphi_d}[p]^p=2^d\int_0^{b^{-d}}(2^dy)^p\d y=\frac{1}{p+1}.
    \end{align}
    
    Next, since $C^\alpha_q(\Lp)$ satisfies \ref{P1} -- \ref{P4}, this implies 
    $C^\alpha_q(\Lp)$ satisfies the Bernstein inequality
    (see \cite[Chapter 7, Theorem 9.3]{DeVore93})
    \begin{align}\label{eq:phibern}
        \norm{\varphi_d}[C^\alpha_q]\leq Cn^\alpha\norm{\varphi_d}[p],\quad
        \forall\varphi\in\Phi_n.
    \end{align}
    On the other hand, by \cite[Lemma 5.12]{Gribonval2019},
    \begin{align}\label{eq:phibesov}
        \norm{\varphi_d}[{\Bsqp[\tilde{\alpha}]}]\geq c2^{\tilde{\alpha}d},
    \end{align}
    for any $\tilde{\alpha}>0$.

    Assume the Bernstein inequality holds in $\Bsqp[\tilde{\alpha}]$ for some
    $\tilde{\alpha}>0$. For $n\in\N$ large enough,
    let $d:=\lfloor n/8-m/4-1/4\rfloor\geq 2$.
    Then, by \eqref{eq:phicost}, $\varphi_d\in\Phi^{\mc C}_n$.
    By \eqref{eq:phibern} and \eqref{eq:phibesov},
    \begin{align*}
        Cn^\alpha\norm{\varphi_d}[p]\geq \norm{\varphi_d}[{C^\alpha_q}]
        \gtrsim \norm{\varphi_d}[{\Bsqp[\tilde{\alpha}]}]\gtrsim
        2^{\tilde{\alpha}d}\gtrsim
        2^{\frac{\tilde{\alpha}}{8}n}.
    \end{align*}
    Together with \eqref{eq:philpnorm}, this is a contradiction and thus
    the claim follows.
\end{proof}

In Section \ref{sec:encoding}, we demonstrated that when representing
classical tools with the tensorized format we obtain
a complexity
that is
similar (or slightly worse) than for the corresponding classical
representation.
This reflects the fact that these tools are tailored for approximation
in classical smoothness spaces and we therefore cannot expect
better ``worst case'' performance in these spaces.
This was also observed in high-dimensional approximation,
see \cite{Schneider2014}.

On the other hand, the above theorem demonstrates
that tensor networks are efficient for functions that cannot
be captured with classical smoothness (see also \cite{Ali2019}).
The cost $n$ in $\Phi^{\mc C}_n$ is comprised of both
the discretization level $d$ and the tensor ranks $r_\nu$
that, in a sense, reflect algebraic properties of
the target function.

The proof of Theorem \ref{thm:noinverse} shows that
tensor networks  are particularly effective in approximating functions
with a high degree of self-similarity.
Such functions do not have to possess any smoothness
in the classical sense.
The ranks reflect global algebraic features,
while smoothness reflects local ``rate of change''
features.

However, one would expect that, if one enforces
a full-rank structure or, equivalently, limits the depth of the
corresponding tensor network,
we should recover inverse estimates
similar to classical tools
from Section \ref{sec:encoding}.

\begin{proposition}[Inverse Embedding for Restricted  $\Phi^{\mc N}_n$]\label{prop:inverse}
    Let $1\leq p<\infty$.
    Define for $n\in\N$, $\rB>0$ and $\CB>0$ the restricted sets  
    \begin{align*}
        \PhinB:=\set{\varphi\in\Vb{m}:
        \cost_{\mc N}(\varphi)\leq n\quad\text{and}\quad
        d(\varphi)\leq\rB\log_b(n)+\CB}.
    \end{align*}
    Then,
    \begin{enumerate}[label=(\roman*)]
        \item    $\PhinB$ satisfies \ref{P1} -- \ref{P6}
                    and thus $\Asq(\Lp, (\PhinB))$ are quasi-normed
                    linear spaces satisfying direct and inverse estimates.
        \item\label{inverserestricted}    The following inverse estimate holds:
                    \begin{align*}
                        \snorm{\varphi}[{\Bsqp[m+1][\sobolev][\sobolev]}]&\leq C\norm{\varphi}[p]b^{\CB(m+1)}n^{\rB(m+1)},
                    \end{align*}
                    for any $\varphi\in\PhinB$, where $\sobolev>0$
                    is the Sobolev embedding number.
        \item    We have the continuous embeddings
                    \begin{alignat*}{2}
                        \Asq(\Lp,(\PhinB))&\hookrightarrow
                        (\Lp,\Bsqp[m+1][\sobolev][\sobolev])_{\frac{\alpha}{\rB(m+1)},q},&&\quad
                        0<\alpha<\rB(m+1),\\
                        \Asq[\rB(m+1)][\infty](\Lp,(\PhinB))&\hookrightarrow
                        \Bsqp[m+1][\sobolev][\sobolev].
                    \end{alignat*}
    \end{enumerate}
\end{proposition}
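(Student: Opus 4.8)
The plan is to handle the three parts in order, with essentially all of the genuine work in (ii). For (i), the verification parallels that of the unrestricted class $\tool^{\mc N}$ from Part I, the only new ingredient being the depth constraint $d(\varphi)\le\rB\log_b(n)+\CB$. Properties \ref{P1} and \ref{P3} are immediate, since rescaling by a nonzero constant changes neither the ranks nor the minimal level $d(\varphi)$, and \ref{P2} holds because both $n$ and $\rB\log_b(n)+\CB$ are nondecreasing in $n$. For \ref{P4} I would combine subadditivity of the $\nu$-ranks with the observation that $d(\varphi)\le\cost_{\mc N}(\varphi)\le n$ (every rank is at least one), so lifting the lower-level summand to the common level $\max\{d(\varphi_1),d(\varphi_2)\}$ costs at most $(m+1)n$ extra neurons; hence $\cost_{\mc N}(\varphi_1+\varphi_2)\le(m+3)n$ while the level of the sum stays $\le\max\{d(\varphi_1),d(\varphi_2)\}$, so the depth constraint is preserved. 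Property \ref{P5} follows from $\bigcup_n\PhinB=\Vb{m}$ (any fixed function has finite cost and finite level, hence lies in $\PhinB$ once $n$ is large) together with the density of piecewise polynomials in $\Lp$. For \ref{P6} the decisive point is that the depth bound confines $\PhinB$ to the \emph{finite-dimensional} space $\Vbd[@][D]{m}$ with $D:=\lfloor\rB\log_b(n)+\CB\rfloor$; since $\PhinB$ is a finite union of bounded-rank (hence closed) varieties it is closed, and a closed subset of a finite-dimensional space is proximinal.

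The core is the Bernstein inequality (ii). The key structural observation is that every $\varphi\in\PhinB$ is a degree-$m$ piecewise polynomial on the uniform $b$-adic grid of mesh $h=b^{-d}$, $d=d(\varphi)$, i.e.\ $\varphi\in\S{N}{m}{-1}$ with $N=b^d$. I would therefore reduce (ii) to the spline estimate
\begin{equation*}
  \snorm{\varphi}[{\Bsqp[m+1][\sobolev][\sobolev]}]\le C\,b^{d(m+1)}\norm{\varphi}[p],\qquad \tfrac1\sobolev=(m+1)+\tfrac1p,
\end{equation*}
after which the depth bound gives $b^{d(m+1)}\le b^{\CB(m+1)}n^{\rB(m+1)}$ immediately. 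To prove the spline estimate I would compute the seminorm through the modulus $\mod_{m+2}(\varphi,t)_{\sobolev}$ and split the defining integral at $t=h$. On the range $t\ge h$ the crude bound $\mod_{m+2}(\varphi,t)_{\sobolev}\le C\norm{\varphi}[\sobolev]\le C\norm{\varphi}[p]$ (the last step using $\sobolev<p$ on a domain of measure one) contributes, after integrating $t^{-(m+1)\sobolev-1}$ over $[h,1]$, a term of order $b^{d(m+1)}\norm{\varphi}[p]$.

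The delicate range is $t<h$, and this is where I expect the main obstacle: the spline structure, the finiteness of the domain, and the critical relation $1/\sobolev=(m+1)+1/p$ must conspire so that the powers of $N=b^d$ cancel exactly. For $h'\le t<h$ the difference $\Delta_{h'}^{m+2}\varphi$ vanishes except on a neighborhood of the $N-1$ interior knots, of total measure $\lesssim Nh'$; there I would bound it by a Nikolskii inequality for degree-$m$ polynomials, $\norm{\varphi}[{L^\infty(I_k)}]\le Ch^{-1/p}\norm{\varphi}[{L^p(I_k)}]$, and then reassemble the local $L^p$ norms via H\"older with exponent $p/\sobolev$. This yields $\mod_{m+2}(\varphi,t)_{\sobolev}\le C(t/h)^{1/\sobolev}\norm{\varphi}[p]$, whose contribution $h^{-1}\int_0^h t^{-(m+1)\sobolev}\,\d t$ is finite \emph{precisely because} $(m+1)\sobolev<1$ on the critical line (which is exactly where $p<\infty$ is needed) and again equals a term of order $b^{d(m+1)}\norm{\varphi}[p]$. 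Combining the two ranges proves the spline estimate and hence (ii).

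Finally, (iii) is immediate. With \ref{P1}--\ref{P6} secured in (i) and the estimate (ii) serving as a Bernstein inequality in the sense of \eqref{eq:bernstein} for the smoothness space $Y=\Bsqp[m+1][\sobolev][\sobolev]$ with exponent $\rB(m+1)$, the inverse half of \Cref{thm:intapp} applied with $X=\Lp$ yields both $\Asq(\Lp,(\PhinB))\hookrightarrow(\Lp,\Bsqp[m+1][\sobolev][\sobolev])_{\alpha/(\rB(m+1)),q}$ for $0<\alpha<\rB(m+1)$ and the endpoint embedding $\Asq[\rB(m+1)][\infty](\Lp,(\PhinB))\hookrightarrow\Bsqp[m+1][\sobolev][\sobolev]$, which are exactly the stated conclusions.
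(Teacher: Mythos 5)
Your proposal is correct and follows the same skeleton as the paper's proof: verify \ref{P1}--\ref{P6} for the restricted tool, observe that every $\varphi\in\PhinB$ is a piecewise polynomial of degree $m$ with at most $b^{d(\varphi)}\le b^{\CB}n^{\rB}$ pieces, deduce a Bernstein inequality from this, and feed it into \Cref{thm:intapp}. The difference is one of self-containedness: where the paper disposes of part (ii) by appeal to ``classical inverse estimates'' for splines and of \ref{P6} by citation to Part I, you supply the arguments. Your proof of the spline Bernstein inequality -- splitting the Besov integral at the mesh width $h=b^{-d}$, using that $\Delta_{h'}^{m+2}\varphi$ vanishes off an $O(Nh')$-neighborhood of the knots, and combining a Nikolskii inequality on each cell with H\"older (exponent $p/\sobolev$) across cells to get $\mod_{m+2}(\varphi,t)_\sobolev\lesssim (t/h)^{1/\sobolev}\norm{\varphi}[p]$ -- is exactly the classical Petrushev/DeVore--Popov argument, and the bookkeeping is right: the powers of $N$ cancel precisely because of the critical-line relation $1/\sobolev=(m+1)+1/p$, and the integrability condition $(m+1)\sobolev<1$ is equivalent to $p<\infty$, which correctly explains why the proposition excludes $p=\infty$. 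Your treatment of \ref{P6} is also a genuine (mild) simplification relative to citing Part I: unlike the unrestricted class $\tool^{\mc N}$, the depth bound confines $\PhinB$ to the finite-dimensional space $V_{b,D,m}$ with $D=\lfloor\rB\log_b(n)+\CB\rfloor$, so closedness of the finite union of bounded-rank sets plus finite dimensionality yields proximinality directly. What the paper's version buys is brevity; what yours buys is an explicit verification of the cited classical estimate at the endpoint smoothness $\alpha=m+1$, making visible where each hypothesis enters.
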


\begin{proof}
    The restriction on $\PhinB$ ensures functions
    such as the sawtooth function from \Cref{fig:sawtooth}
    are excluded.
    \begin{enumerate}[label=(\roman*)]
        \item \ref{P1} -- \ref{P3} is trivial.
                    For \ref{P4}: since $\Phi^{\mc N}_n+\Phi^{\mc N}_n\subset\Phi^{\mc N}_{cn}$ and
                    \[d(\varphi_1+\varphi_2)\leq\max(d_1,d_2)\leq
                    \rB\log_b(n)\leq\rB\log_b(cn)\] for $\varphi_1,\varphi_2\in\PhinB$,
                    then \ref{P4} holds for $\PhinB$
                    for the same $c$.
                    For \ref{P5}:
                    $\bigcup_{n=0}^{\infty}\Phi^{\mc N}_n=\bigcup_{n=0}^\infty
                    \PhinB$ and thus density follows as in
                    \cite[Theorem 2.25]{partI}.
                    Finally, \ref{P6} follows as in \cite[Lemma 3.14]{partI}.
        \item    Any $\varphi\in\PhinB$ is a spline with at most
                    $b^{d(\varphi)}\leq b^{\CB}n^{\rB}$ pieces.
                    Thus, we can use classical inverse estimates to obtain
                    the inequality.
        \item    Follows from \ref{inverserestricted} and
                    Theorem \ref{thm:intapp}.
    \end{enumerate}
\end{proof}
% !TEX root = ../part2.tex
\section{The Roles of Depth and Sparse Connectivity}\label{sec:conclusion}
    One could ask how the direct estimates would change if we replace
    $\tool^{\mc N}$ with $\PhinB$ from \Cref{prop:inverse}.
    Strictly speaking, this would require
    lower bounds for the complexity $n:=\cost_{\mc N}(\varphi)$.
    Nonetheless, a simple thought experiment reveals some key features
    of $\PhinB$, assuming the upper bounds for $n$ in this section are sharp
    to some degree.
    
    Consider the case of Sobolev spaces  $\Wkp[r]$  from \Cref{thm:jacksonfixed}
    with $r\leq m+1$. Then,
    assuming the upper bounds from \Cref{thm:jacksonfixed} are sharp,
    we have
    \begin{align*}
        n\sim C_1(b,m)b^d.
    \end{align*}
    I.e., the approximands of \Cref{thm:jacksonfixed} satisfy
    $\varphi\in\PhinB$ for $\rB=1$ and $\CB=\CB(b, m)$.
    Hence, in this case we would indeed obtain the same
    approximation rate as with $\tool$, in
    addition to inverse estimates from \Cref{prop:inverse}.
    
    Consider now $\Wkp[r]$ with $r>m+1$.
    In this case we have $\rB=\rB(r)>1$ with
    $\rB\rightarrow\infty$ as $r\rightarrow\infty$.
    In other words, if we fix $\PhinB$ with some $\rB>1$,
    then we would obtain direct estimates
    for $\Wkp[r]$
    as in \Cref{thm:jacksonfixed},
    with $0<r\leq \bar r$ for $\bar r$ depending on $\rB>1$.
    I.e., $\bar r=m+1$ for $\rB=1$ and $\bar r\rightarrow\infty$
    as $\rB\rightarrow\infty$.
    
   Finally, consider the direct
    estimate for Besov spaces  $\Bsqp[\alpha_\delta][\sobolev][\sobolev]$
    from \Cref{thm:jacksonbesov}.
    Again, assuming the upper bounds of this lemma are sharp
    and $\alpha<m+1$,
    we would obtain
    \begin{align*}
        n \sim CNd,
    \end{align*}
    where $N$ is the number of knots of a corresponding free knot spline
    and $d$ is the maximal level of said spline.
    From \Cref{lemma:smallinterval}, we could assume
    $d\sim\log(N)$ and in this case
    \begin{align*}
        d\sim\log(N)\lesssim\log(N)+\log\log(N)\lesssim\log(n),
    \end{align*}
    in which case we claim we could recover direct estimates as in
    \Cref{thm:jacksonbesov}. However, note that, in order to recover
    near to optimal rates, we would have to consider
    the complexity measure $\cost_{\mc S}$ (or $\cost_{\mc N}$) -- i.e.,
    we have to account for sparsity.
    And, as for Sobolev spaces, for $\alpha\geq m+1$,
    $\PhinB$ is not sufficient anymore as we require depth (and sparsity).
    
    Thus, when comparing approximation with tensor networks
    to approximation with classical tools, we see that depth can
    very efficiently replicate approximation with higher-order
    polynomials: that is, with exponential convergence.
    It was already noted in \cite{Grasedyck2010, Oseledets2012} that
    (deep) tree tensor networks can represent polynomials with
    bounded rank, while the canonical (CP) tensor format,
    corresponding to a shallow network,
    can only do so approximately with ranks bounded
    by the desired accuracy.
    Moreover, similar observations about depth and polynomial degree
    were made about ReLU networks, see, e.g.,
    \cite{Yarotsky2017, Opschoor2020, Opschoor201907}.
    
    On the other hand, sparse connectivity is necessary to recover
    classical adaptive (free knot spline) approximation, see
    \Cref{cor:besovspaces}. In other words:
    sparse tensor networks can replicate $h$-adaptive
    approximation, while deep tensor networks can replicate $p$-adaptive
    approximation, and, consequently, sparse \emph{and} deep
    tensor networks can replicate $hp$-adaptive approximation.

%\printbibliography
\bibliographystyle{abbrv}
\bibliography{literature}

\appendix

% !TEX root = ../part2.tex
\newpage
\section{Proofs for \Cref{sec:encoding}}
  \begin{proof}[Proof of \Cref{cor:freeknot}]
  The bounds for $\cost_{\mc N}(\varphi)$ and $\cost_{\mc C}(\varphi)$ directly follow from \Cref{thm:freeknot}. 
To obtain the bound on the sparse representation complexity, we have to provide a representation of $\varphi$ in a tensor format. First, we note that the interval $I_k = [x_{k-1}^b,x_{k}^b)$ is such that $I_k = \cup_{i=1}^{n_k} I_{k,i}$, where the $I_{k,i}$ are $n_k$ contiguous intervals from $b$-adic partitions of $[0,1)$, and the minimal $n_k$ can be bounded as $n_k \le 2d(b-1)$.
To illustrate why this bound holds, we refer to \Cref{fig:minpath}.

If $d$ is the maximal level, the subsequent partitioning of $[0,1)$
for levels $l=0, 1, 2,\ldots, d$ can be represented as a tree, where
each vertex has $b$ sons, i.e., each interval is subsequently split
into $b$ intervals. Then, the end-points $x^b_{k-1}$ and
$x^b_k$ of an arbitrary interval $I_k$ correspond to two points
in this interval partition tree. The task of finding a minimal sub-partitioning
$I_k = \cup_{i=1}^{n_k} I_{k,i}$ is then equivalent to
finding the shortest path in this tree, and $2d$ represents the longest possible path.

\begin{figure}[h]
    \centering
    \includegraphics[scale=.4]{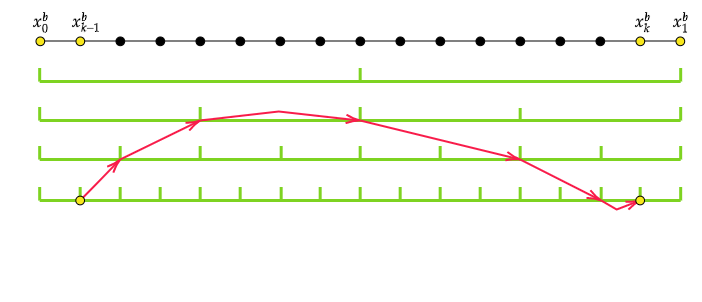}
    \caption{Visual representation of different partitioning levels of
    the interval $[0,1)$, with $b=2$ and $d=4$.}
    \label{fig:minpath}
\end{figure} 

In \Cref{fig:minpath}, we depict a scenario close to the ``worst case''.
In order to reach vertex $x_k^b$ from vertex $x_{k-1}^b$,
at most, we would have to traverse the tree up (towards the root) and back down.
On each level, we would need at most $b-1$ horizontal steps.
Thus, we require at most $2d(b-1)$ steps to reach $x_k^b$.

Then, $\varphi$ admits a representation as $\varphi = \sum_{k=1}^N \sum_{i=1}^{n_k} s_{i,k}$, with $s_{i,k}$ supported on $I_{i,k}$ and polynomial on this interval. Let $\lambda := (k,i)$. We have $I_\lambda = [b^{d_\lambda} j_\lambda , b^{d_\lambda}(j_{\lambda}+1))$ for some $d_\lambda \le d$ and $j_\lambda \in \{0,\hdots,b^{d_\lambda-1}-1\}$. By denoting $(j_{\lambda,1},\hdots,j_{\lambda,d_{\lambda}})$ the representation of $j_\lambda$ in base $b$, $s_\lambda$ admits a tensorization 
  $$
 T_{b,d_\lambda} (s_\lambda) = \delta_{j_{\lambda,1}} \otimes \hdots  \otimes \delta_{j_{\lambda,d_{\lambda}}} \otimes  p_\lambda,
  $$   
  with $p_\lambda\in \P_{  m}$, so that $\cost_{\mathcal{S}}(s_\lambda) \le d_\lambda +   \dim S$. From \cite[Lemmas 3.11 and 3.12]{partI}, we deduce that 
  $$
  \cost_{\mathcal{S}}(\varphi) \le \sum_{k=1}^N \sum_{i =1}^{n_k}  
  b (d_{k,i} +   \dim S) + b^2 (\dim S)^3 (d-d_{k,i}) \le 2 b^2 (\dim S)^3 d \sum_{k=1}^N n_k \le 4 b^3 (m+1)^3 d^2 N.
  $$
\end{proof}

 \begin{proof}[Proof of \Cref{cor:freeknotinterpolant}]
From  \Cref{lem:boundranksfreeknotsplines}, we have 
\begin{align*}
\cost_{\mathcal{N}}(\interpol[b,\bar d][m](\varphi)) &\le  d (\bar m + N)+ (\bar d-d) (\bar m+N) \le (\bar m+1) d N + (\bar d-d) (\bar m+N),\\
\cost_{\mathcal{C}}(\interpol[b,\bar d][m](\varphi)) &\le b (\bar m + N) + (d-1) b  (\bar m+N)^2 + (\bar d-d) b(\bar m+1)^2 + b (m+1) \\
&\le 2b d (\bar m+1)^2 d N^2 +   (\bar d-d) b (\bar m+1)^2.
\end{align*}
Now we consider the sparse representation complexity.
The function $\varphi$ admits a representation
  $\varphi = R_{b,d,\bar m,\bs r}(\mathbf{v})$ for some $\bs r\in \N^r$ and a tensor network $\mathbf{v} \in \mathcal{P}_{b,d,\bar m , \bs r}$ such that $\cost_{\mathcal{S}}(\mathbf{v}) = \cost_{\mathcal{S}}(\varphi)  $
 and 
\begin{align*}
&T_{b,d}(\varphi)(i_1,\hdots,i_d,y) = \sum_{k_1=1}^{r_1}\cdots\sum_{k_d=1}^{r_d}\sum_{q=1}^{\bar m+1} v_{1}^{k_1}(i_1)   \cdots v_d^{k_{d-1},k_d}(i_d) v^{k_d,q}_{d+1} \varphi_{q}^{\bar m+1}(y),
\end{align*}
with the $\varphi_q^{\bar m+1}$ forming a basis of $\P_{\bar m}$.  From \Cref{cor:fixedknot}, we know that $\cost_{\mathcal{S}}(\varphi)  \le C_1N$ for some constant $C_1$ depending only on $b$ and $\bar m$. 
Then from 
\eqref{tensor-structure-local-projection}, 
%\cite[Lemma 2.29]{partI}
we have that 
\begin{align*}
&T_{b,\bar d}(\interpol[b,\bar d][m](\varphi))(i_1,\hdots,i_{\bar d},y)  \\
&=\sum_{k_1=1}^{r_1}\cdots\sum_{k_d=1}^{r_d}\sum_{q=1}^{\bar m+1} v_{1}^{k_1}(i_1)   \cdots v_d^{k_{d-1},k_d}(i_d) v^{k_d,q}_{d+1} T_{b,\bar d - d}(\interpol[b,\bar d-d][m](\varphi_q^{\bar m+1}))(i_{d+1},\hdots,i_{\bar d},y)\\
&=\sum_{k_1=1}^{r_1}\cdots\sum_{k_d=1}^{r_d}\sum_{q=1}^{\bar m+1}\sum_{j_{d+1}=1}^{b} v_{1}^{k_1}(i_1)   \cdots v_d^{k_{d-1},k_d}(i_d) \bar v^{k_d,(q,j_{q+1})}_{d+1}(i_{d+1}) T_{b,\bar d - d}(\interpol[b,\bar d-d][m](\varphi_q^{\bar m+1}))(j_{d+1},\hdots,i_{\bar d},y),
\end{align*}
with $ \bar v^{k_d,(q,j_{q+1})}_{d+1}(i_{d+1})  = v^{k_d,q}_{d+1} \delta_{j_{d+1}}(i_{d+1})$ such that 
$\Vert \bar v_{d+1} \Vert_{\ell^0} =  b \Vert v_{d+1} \Vert_{\ell^0}$. 
Noting that $r_\nu(\interpol[b,\bar d-d][m](\varphi_q^{\bar m+1})) \le r_\nu(\varphi_q^{\bar m+1}) \le \bar m+1$ for all $\nu\in \N$, and 
following the proof of \cite[Lemma 3.11]{partI}, we can prove that for $\bar d-d \ge 2$, $\interpol[b,\bar d-d][m](\varphi_q^{\bar m+1})$ admits a representation 
 \begin{align*}
 &T_{b,\bar d-d}(\interpol[b,\bar d-d][m](\varphi_q))(j_{d+1},i_{d+2},\hdots,i_{\bar d},y) \\
& = \sum_{\alpha_{2},q_2=1}^{\bar m+1} \hdots  
\sum_{\alpha_{l},q_{l}=1}^{\bar m+1}  \sum_{p=1}^{m+1} \bar v^{(q,j_{d+1}),(q_2,\alpha_2)}_{d+2}(i_{d+2})   \hdots  \bar v^{(q_{l-1},\alpha_{l-1}),(q_l,\alpha_l)}_{\bar d}(i_{\bar d}) 
 \bar v_{\bar d+1}^{(q_l,\alpha_l),p} \varphi_{p}(y) 
    \end{align*}
  with the $\varphi_p$ forming a basis of $\P_m$ and  with $\bar v_{d+2} \in \R^{b\times (b(\bar m+1)) \times (\bar m+1)^2}$, $\bar v_\nu \in \R^{b\times (\bar m+1)^2\times (\bar m+1)^2}$   for $d+3\le \nu\le \bar d$,  and $\bar v_{\bar d+1} \in \R^{(\bar m+1)^2\times (m+1)}$. Then, we have 
$\interpol[b,\bar d][m](\varphi) = R_{b,\bar d,m,\overline{\bs r}}(\overline{\mathbf{v}})$ with $\overline{\mathbf{v} }= (\bar v_1,\hdots,v_{d},\bar v_{d+1}, \hdots, \bar v_{\bar d+1})$ such that  
\begin{align*}
\cost_{\mathcal{S}}(\interpol[b,\bar d][m](\varphi)) &\le b \cost_{\mathcal{S}}(\varphi) +  b^2 (\bar m+1)^3 + 
b (\bar m+1)^4 (\bar d - d -2) + (\bar m+1)^2 (m+1)\\
&\le \max\{b,m+1\} ( \cost_{\mathcal{S}}({\varphi})  + b (\bar m+1)^3 + (\bar m+1)^4 (\bar d - d -2) + (\bar m+1)^2) \\
&\le  \max\{b,m+1\}(\cost_{\mathcal{S}}({\varphi}) + b  (\bar m+1)^4 (\bar d - d) ).
\end{align*}
For $\bar d - d =1,$ we have the representation\begin{align*}
 T_{b,\bar d-d}(\interpol[b,\bar d-d][m](\varphi_q^{\bar m+1}))(j_{d+1},y)  =  \sum_{p=1}^{m+1}  \bar v_{d+2}^{(q,j_{d+1}),p} \varphi_p(y)
    \end{align*}
    with some $\bar v_{d+2} \in \R^{(b (\bar m+1)) \times (m+1)}$. Then for $\bar d - d =1$, $\varphi \in R_{b,\bar d,S,\overline{\bs r}}(\overline{\mathbf{v}})$ with $\overline{\mathbf{v} }= (\bar v_1,\hdots,v_{d},\bar v_{d+1},\bar v_{\bar d+2}) $, and 
 \begin{align*}
    \cost_{\mathcal{S}}(\interpol[b,\bar d][m](\varphi))& \le  b  \cost_{\mathcal{S}}({\mathbf{v}}) +  b (\bar m+1) (m+1)
    \le \max\{b,m+1\}( \cost_{\mathcal{S}}({\mathbf{v}})  +b (\bar m+1) (\bar d-d)).
  \end{align*}
  Finally for $\bar d=d$, we simply have $\interpol[b,\bar d-d][m] = \mathcal{I}_m$, and we can show that $\interpol[b,\bar d][m](\varphi) = R_{b,d,m,\bs r}(v_1,...,v_d,\bar v_{d+1})$ with $\Vert \bar v_{d+1} \Vert_{\ell^0}  \le (m+1)\Vert v_{d+1} \Vert_{\ell^0}  $, so that   
  \begin{align*}
    \cost_{\mathcal{S}}(\interpol[b,\bar d][m](\varphi)) \le (m+1) \cost_{\mathcal{S}}({\mathbf{v}}) .
      \end{align*}
      Then for  any $\bar d \ge d$, we have 
      \begin{align*}
      \cost_{\mathcal{S}}(\interpol[b,\bar d][m](\varphi)) \le \max\{b,m+1\} ( \cost_{\mathcal{S}}({\varphi}) + b(\bar m+1)^4 (\bar d - d )),
    \end{align*}      
    and we conclude by using  $\cost_{\mathcal{S}}({\varphi})\le C_1N.$
\end{proof}

\section{Proofs for \Cref{sec:dirinv}}
\begin{proof}[Proof of \Cref{lemma:reinterpol}]\label{proof:reinterpol}

From \eqref{tensor-structure-local-projection}, we know that $s :=  \mathcal{I}_{b,d, \bar  m} f $ admits a tensorization $  \tensor{s} := T_{b,d} s = (id_{\{1,\hdots,d\}} \otimes \mathcal{I}_{\bar m}) \tensor{f}$, with $\tensor{f} = T_{b,d} f$. Then
$$T_{b,d} (f-s) =   \sum_{j\in \Ib^d} \delta_{j_1} \otimes \hdots \otimes \delta_{j_d} \otimes ( g_j - \mathcal{I}_{\bar m} g_j),  
$$
with $g_j =  \tensor{f}(j_1,\hdots,j_d,\cdot)$. Using the property \eqref{property-local-interpolation} of operator $\mathcal{I}_{\bar m}$, with a constant $  C$ depending on $\bar m$ and $p$, we have 
$$
 \Vert g_j - \mathcal{I}_{\bar m} g_j \Vert_p \le   C \vert   g_j \vert_{W^{\bar m+1,p}} =   C \Vert   D^{\bar m+1} g_j \Vert_{p}
.
$$
Then, using  \Cref{thm:tensorizationmap}, we have for $p<\infty$
\begin{align*}
\Vert f - s \Vert_p^p &=  \sum_{j \in \Ib^d} b^{-d} \Vert g_j - \mathcal{I}_{\bar m} g_j \Vert_p^p \le   C^p \sum_{j \in \Ib^d} b^{-d} \Vert   D^{\bar m+1} g_j \Vert_{p}^p= C^p \Vert (id_{\{1,\hdots,d\}} \otimes D^{\bar m+1}) \tensor{f} \Vert_p^p
\end{align*}
and 
\begin{align*}
\Vert f - s \Vert_\infty &=  \max_{j \in \Ib^d} \Vert g_j - \mathcal{I}_{\bar m} g_j \Vert_\infty \le  C  \max_{j \in \Ib^d} 
\Vert   D^{\bar m+1} g_j \Vert_{\infty}
\le    C   \Vert (id_{\{1,\hdots,d\}} \otimes D^{\bar m+1}) \tensor{f} \Vert_\infty.
\end{align*}
Then, from \cite[Theorem 2.15]{partI}
we deduce 
\begin{align*}
\Vert f - s \Vert_p &\le   C \Vert (id_{\{1,\hdots,d\}} \otimes D^{\bar m+1}) \tensor{f} \Vert_p =  C b^{-d(\bar m+1)} \Vert f \Vert_{W^{\bar m+1,p}}.
\end{align*}
For $\bar d\ge d$, we obtain from
\cite[Lemma 2.6]{partI} and \eqref{tensor-structure-local-projection} 
%\cite[Lemmas 2.6 and 2.29]{partI}
that $T_{b,  d} \mathcal{I}_{b,\bar d,m} T_{b,  d}^{-1} = T_{b,  d} T_{b,\bar d}^{-1}  (id_{\{1,\hdots,\bar d\}} \otimes \mathcal{I}_m ) T_{b,\bar d} T_{b,  d}^{-1}  = 
id_{\{1,\hdots,d\}}\otimes (T_{b,\bar d-d} (id_{\{1,\hdots,\bar d-d\}}\otimes \mathcal{I}_m)T_{b,\bar d-d}^{-1} ) = id_{\{1,\hdots,d\}}\otimes \mathcal{I}_{b,\bar d-d,m}$. Then  
 $\tilde s :=  \mathcal{I}_{b,\bar d,  m} s $ admits for tensorization   $  T_{b,  d} \tilde s =   (id_{\{1,\hdots,  d\}} \otimes \mathcal{I}_{ b,\bar d - d,m}) \tensor{s} = 
\sum_{j\in \Ib^d}\delta_{j_1}\otimes \hdots \otimes \delta_d \otimes (\mathcal{I}_{ b,\bar d - d,m} \mathcal{I}_{\bar m} g_j),$ which yields 
$$T_{b,d} (s-\tilde s) =   \sum_{j\in \Ib^d} \delta_{j_1} \otimes \hdots \otimes \delta_{j_d} \otimes (  \mathcal{I}_{\bar m} g_j -  \mathcal{I}_{ b,\bar d - d,m} \mathcal{I}_{\bar m} g_j ).
$$
From the property \eqref{property-local-interpolation} of  $\mathcal{I}_{m}$, with a constant $\tilde C$ depending on $m$ and $p$, and the property of $\mathcal{I}_{\bar m}$,
we obtain 
$$
\Vert \mathcal{I}_{\bar m} g_j -  \mathcal{I}_{ b,\bar d - d,m} \mathcal{I}_{\bar m} g_j \Vert_p \le  \tilde C b^{-(\bar d -d) (m+1)} \vert \mathcal{I}_{\bar m} g_j \vert_{W^{m+1,p}}\leq \tilde  C b^{-(\bar d -d) (m+1)}\left(
\snorm{g_j}[{\Wkp[m+1]}]+  C\snorm{g_j}[{\Wkp[\bar m+1]}]\right).
$$
In the same way as above, we deduce 
\begin{align*}
\Vert s - \tilde s \Vert_p &\le   \tilde  C b^{-(\bar d -d) (m+1)} 
\left(\Vert (id_{\{1,\hdots,d\}} \otimes D^{m+1}) \tensor{f} \Vert_p
+  C\Vert (id_{\{1,\hdots,d\}} \otimes D^{\bar m+1}) \tensor{f} \Vert_p\right)
\\
&=      C' b^{-(\bar d-d) (m+1)}(b^{-d(m+1)}\snorm{f}[{\Wkp[m+1][p]}]+ b^{-d(\bar m+1)}\snorm{f}[{\Wkp[\bar m+1][p]}]), 
\end{align*}
with $C' = \tilde C \max\{1,C\}$ depending on $m$, $\bar m$ and $p$, which completes the proof. 
\end{proof}

\begin{proof}[Proof of \Cref{lemma:badicfree}]
\label{proof:badicfree}
    The proof is a modification of the proof of Petrushev for free
    knot splines (see \cite[Chapter 12, Theorem 8.2]{DeVore93}).
    The first step is the optimal selection of $n$ intervals
    that, in a sense, balances out the Besov norm
    $\snorm{f}[{\Bsqp[@][\sobolev][\sobolev]}]$.
    In this step, unlike in the case of classic free knot splines,
    we are restricted to $b$-adic knots.
    The second step is a polynomial approximation
    over each interval and is essentially the same as with free knot splines.
    We demonstrate this step here as well  for completeness.
    
    First, we define a set function that we will use for the selection of
    the $n-1$ $b$-adic knots.  Let $r:= \lfloor\alpha\rfloor +1$ and 
    \begin{align*}
        M^\sobolev:=
        \int_0^1t^{-\alpha\sobolev-1}\w_{r}(f,t)_\sobolev^\sobolev\d t,
    \end{align*}
    where   $\w_{r}$ is the \emph{averaged modulus of smoothness},
    i.e.,
    \begin{align*}
        \w_{r}(f,t)_\tau^\tau :=\frac{1}{t}\int_0^t\norm{\diff_h^{r}[f]}[\tau]^\tau\d h.
    \end{align*}
    By \cite[Chapters 2 and 12]{DeVore93}, $M$ is equivalent to
    $\snorm{f}[{\Bsqp[@][\sobolev][\sobolev]}]$.
    
    Let  
    \begin{align*}
        g(x,h,t):= \begin{cases} t^{-\alpha\sobolev-2} |\diff_h^{r}[f](x)|^{\tau} & \text{if $h \in  [0,t]$ and $x \in [0, 1- rh] $},  
        \\ 0 &\text{elsewhere} .  
        \end{cases}
    \end{align*}
    Then,
    \begin{align*}
        M^\sobolev&=\int_0^1\int_0^\infty\int_0^1g(x,h,t)\d x\d h\d t=
        \int_0^1G(x)\d x,\\
        G(x)&:=\int_0^\infty\int_0^1g(x,h,t)\d h\d t.
    \end{align*}
    The aforementioned set function is then defined as
    \begin{align*}
        \setf(t):=\int_0^tG(x)\d x.
    \end{align*}
    This function is positive, continuous and monotonically increasing with
    \begin{align*}
        \setf(0)=0\quad\text{and}\quad\setf(1)=M^\sobolev
        \sim\snorm{f}[{\Bsqp[@][\sobolev][\sobolev]}]^\sobolev.
    \end{align*}
    Thus, we can pick $N$ intervals $I_k$, $k=1,\ldots,N$, with disjoint interiors
    such that
    \begin{align*}
        \bigcup_{k=1}^NI_k=[0,1]\quad\text{and}\quad
        \int_{I_k}G(x)\d x=\frac{M^\sobolev}{N}.
    \end{align*}
    
    This would have been the optimal knot selection for free knot splines.
    For our purposes we need to restrict the intervals to $b$-adic knots.
    More precisely, we show that with restricted intervals
    we can get arbitrarily close to the optimal choice.
    
    Let $\varepsilon>0$ be arbitrary. Starting with $k=1$,
    due to the properties of the function $\setf(\cdot)$,
    we can pick a $b$-adic interval $I_1^\varepsilon$ with left end point $0$ such that
    \begin{align}\label{eq:badicint}
        \int_{I_1^\varepsilon}G(x)\d x\leq\frac{M^\sobolev}{N}
        \leq\int_{I_1^\varepsilon}G(x)\d x+\frac{\varepsilon}{N}.
    \end{align}
    For $I_2^\varepsilon$, we set the left endpoint equal to the
    right endpoint of $I_1^\varepsilon$
    and choose the right endpoint of $I_2^\varepsilon$ as a $b$-adic knot
    such that \eqref{eq:badicint} is satisfied for $I_2^\varepsilon$.
    Repeating this procedure until $I_{N-1}^\varepsilon$ we get
    \begin{align*}
        \int_{\bigcup_{k=1}^{N-1}I_k^\varepsilon}G(x)\d x\leq
        \frac{N-1}{N}M^\sobolev\leq\int_{\bigcup_{k=1}^{N-1}I_k^\varepsilon}
        G(x)\d x+\frac{N-1}{N}\varepsilon.
    \end{align*}
    Taking $I_N^\varepsilon$ as the remaining interval such that
    $\bigcup_{k=1}^{N}I_k^\varepsilon=[0,1]$, we have
    \begin{align*}
        \int_{\bigcup_{k=1}^{N}I_k^\varepsilon}G(x)\d x=M^\sobolev.
    \end{align*}
    For the last interval we see that
    \begin{align*}
        \int_{I_N^\varepsilon}G(x)\d x\geq\frac{M^\sobolev}{N},
    \end{align*}
    and
      \begin{align*}
        \int_{I_N^\varepsilon}G(x)\d x &= M^\sobolev - \int_{\bigcup_{k=1}^{N-1}I_k^\varepsilon}G(x)\d x
        \leq  M^\sobolev - \frac{N-1}{N} \Big( M^\sobolev -  \varepsilon\Big)
        \leq \frac{1}{N}M^\sobolev+\varepsilon.
    \end{align*}
%    \begin{align*}
%        \int_{I_N^\varepsilon}G(x)\d x&\leq
%        \frac{N}{N-1}\int_{\bigcup_{k=1}^{N-1}I_k^\varepsilon}G(x)\d x+\varepsilon
%        -\int_{\bigcup_{k=1}^{N-1}I_k^\varepsilon}G(x)\d x
%        =\frac{1}{N-1}\int_{\bigcup_{k=1}^{N-1}I_k^\varepsilon}G(x)\d x+\varepsilon\\
%        &\leq\frac{1}{N}M^\sobolev+\varepsilon.
%    \end{align*}
    
    Finally, we apply polynomial approximation over each $I_k^\varepsilon$.
    There exist polynomials $P_k$ of degree $\leq\m$ over each $I_k^\varepsilon$
    such that for $f_k:=f|_{I_k^\varepsilon}$
    (see \cite[Chapter 12, Theorem 8.1]{DeVore93})
    \begin{align*}
        \norm{f_k-P_k}[p]^\sobolev (I_k^\varepsilon)\leq
        C^\sobolev\snorm{f_k}[{\Bsqp[@][\sobolev][\sobolev]}]^\sobolev(I_k^\varepsilon)
        \leq C' \int_{I_k^\varepsilon}G(x)\d x\leq C'
        \begin{cases}
            \frac{1}{N}M^\sobolev,&\quad k=1,\ldots,N-1,\\
            \frac{1}{N}M^\sobolev+\varepsilon,&\quad k=N,
        \end{cases}
    \end{align*}
    where $\norm{\cdot}(I_k^\varepsilon)$ means we take norms
    over $I_k^\varepsilon$ only.
    Setting $s=\sum_{k=1}^NP_k\indicator{I_k^\varepsilon}$
    and since $p/\sobolev>1$, 
    we obtain
    \begin{align*}
        \norm{f-s}[p]^p&=\sum_{k=1}^N\norm{f_k-P_k}[p]^p(I_k^\varepsilon)\leq
        (N-1)CM^pN^{-p/\sobolev}+
        \left(\frac{1}{N}M^\sobolev+\varepsilon\right)^{p/\sobolev}\\
        &\leq(N-1)CM^pN^{-p/\sobolev}+2^{p/\sobolev-1}
        \left((\frac{1}{N}M^\sobolev)^{p/\sobolev}+
        \varepsilon^{p/\sobolev}\right)
        \leq \max\set{C, 2^{p/\sobolev-1}}\left(M^pN^{1-p/\sobolev}+
        \varepsilon^{p/\sobolev}\right).
    \end{align*}
    Since the constant is independent of $\varepsilon$ and
    $\varepsilon$ can be chosen arbitrarily small, we obtain
    \eqref{eq:badicfree}.
\end{proof}

\begin{proof}[Proof of \Cref{lemma:smallinterval}]
\label{proof:smallest}
    Let $f_k:=f\indicator{I_k}$. By the Hölder inequality
    \begin{align*}
        \norm{f_k}[p]^p=\int_0^1|f_k(x)|^p\d x\leq
        \left(\int_0^1|f(x)|^{p\delta}\d x\right)^{1/\delta}
        \left(\int_{I_k}\d x\right)^{1/q}.
    \end{align*}        
    We choose
    \begin{align*}
        \Lambda:=\set{k=1,\ldots,N:\;|I_k|>\varrho(\varepsilon)}.  
    \end{align*}
    Then,
    \begin{align*}
        \sum_{k\not\in\Lambda}\norm{f_k}[p]^p\leq
        \norm{f}[p\delta]^pN\varrho(\varepsilon)^{1/q}\leq\varepsilon^p.
    \end{align*}
    For $\tilde{s}$, we thus estimate
    \begin{align*}
        \norm{f-\tilde{s}}[p]^p=\sum_{k\in\Lambda}\norm{f_k-s_k}[p]^p
        +\sum_{k\not\in\Lambda}\norm{f_k}[p]^p\leq
        2\varepsilon^p.
    \end{align*}
\end{proof}

\begin{proof}[Proof of \Cref{thm:jacksonbesov}]
\label{proof:lemmafree}
    As in \Cref{thm:jacksonfixed}, we consider only the case
    $m+1\leq \alpha$, as the
    case $\alpha<m+1$ can be handled analogously with fewer steps.
    By \Cref{lemma:badicfree}, we can restrict ourselves to free knot splines
    with $b$-adic knots.
    By \Cref{lemma:smallinterval}, we can bound the size of the smallest interval
    and thus the level $d$.
    And finally, by \Cref{lem:boundranksfreeknotsplines},
    we can bound the ranks of an interpolation of a free knot spline.
    Thus, we have all the ingredients to bound
    the representation complexity of a free knot spline.
    It remains to combine these estimates with standard results
    from approximation theory to arrive at \eqref{eq:bestfreeknot}.
    
    Let $N\in\N$ be arbitrary.
    %From 
    %\cite[§8 of Chapter 12]{DeVore93},
   % we know there exists a spline in $\Sfreeb{N}{\bar m}$ (a quasi-interpolant) with 
    %$\alpha <\m +1$ that approximates $f$ up-to precision $N^{-\alpha}$.
    % By \Cref{lemma:badicfree}, we can take w.l.o.g.\ this spline
   % to have $b$-adic knots. Thus,
    %there exists a spline $s\in\Sfreeb{N}{\bar m}$ such that
    From \Cref{lemma:badicfree}, we know there exists a spline $s\in\Sfreeb{N}{\bar m}$ with $b$-adic knots such that
    \begin{align}\label{eq:errorfreeknot}
        \norm{f-s}[p]\leq
        C_1N^{-\alpha}\norm{f}[{\Bsqp[\alpha][\sobolev][\sobolev]}],
    \end{align}
    for some constant $C_1>0$.
    Set
    $\varepsilon:= 
    C_1\norm{f}[{\Bsqp[\alpha][\sobolev][\sobolev]}]N^{-\alpha}$.
    Since $\alpha>1/\sobolev-1/p$, there exists a $\delta>1$ such that
    $f\in\Lp[p\delta]$.
    By Lemma \ref{lemma:smallinterval}, 
    we can assume w.l.o.g. that $d := d(s)$ is such that
    $$
b^{-d} > N^{-q} \norm{f}[p\delta]^{pq} \varepsilon^{pq},
    $$
    or equivalently
    \begin{align*}
        d < q\log_b(\varepsilon^{-p}\norm{f}[p\delta]^p N )=
        q\log_b\left[C_1^{-p}\norm{f}[{\Bsqp[\alpha][\sobolev][\sobolev]}]^{-p}
       \norm{f}[p\delta]^{ {p}}N^{1+\alpha p}\right]\leq
       q\log_b\left[C_1^{-p}N^{1+\alpha p}\right],
    \end{align*}
    where $q=\delta/(\delta-1)$.
    
    We use the interpolant of \Cref{local-interpolation} and
    set $\tilde s:=\mc I_{b,\bar d,m}s$ for
    $\bar d\geq d$ to be specified later.
    Let $s_j:=\tensor{s}(j_1,\ldots,j_d,\cdot)$, where $\tensor{s} = T_{b,d}s$, 
    and analogously $\tilde s_j$.    
    For the re-interpolation error
    we can estimate similar to \Cref{lemma:reinterpol}
    \begin{align*}
        \norm{s-\tilde s}[p]^p&=\sum_{j\in I_b^d}b^{-d}\norm{s_j-\tilde s_j}[p]^p
        \leq C_2\sum_{j\in I_b^d}b^{-d}b^{-p(\bar d-d)(m+1)}
        \norm{s_j^{(m+1)}}[p]^p
        \leq C_3\sum_{j\in I_b^d}b^{-d}b^{-(\bar d-d)(m+1) {p}}\norm{s_j}[p]^p,
    \end{align*}
    where the latter follows from \cite[Theorem 2.7 of Chapter 4]{DeVore93},
    since $s_j$ is a polynomial of degree $\bar m$.

    Since $s$ is a quasi-interpolant of $f$,
    $s_j$ is a dilation of a polynomial (near-)best approximation of
    $f$ over the corresponding interval and thus
    by \cite[Theorem 8.1 of Chapter 12]{DeVore93}
    \begin{align*}
        \norm{s_j}[p]\leq C_4\snorm{f_j}[{\Bsqp[@][\sobolev][\sobolev]}]
    \end{align*}
    where $f_j:=\tensor{f}(j_1,\ldots,j_d,\cdot)$
    and for any $j\in I_b^d$.
    Together with \cite[Proposition 2.19]{partI},
    we finally estimate
    \begin{align*}%\label{eq:freeknotreinterpol}
        \norm{s-\tilde s}[p]\leq C_5 b^{-(\bar d-d)(m+1)} b^{d/p}
        \snorm{f}[{\Bsqp[@][\sobolev][\sobolev]}].
    \end{align*}
  Thus,  to obtain at least 
    the same approximation order as in \eqref{eq:errorfreeknot},
    we set
    \begin{align*}
        \bar d:=\left\lceil\frac{d(m+1+1/p) + \alpha \log_b(N)}{m+1}\right\rceil \le C_6 \log_b(N),
    \end{align*}
    so that 
       \begin{align}\label{eq:freeknotreinterpol}
        \norm{s-\tilde s}[p]\leq C_5 N^{-\alpha} 
        \snorm{f}[{\Bsqp[@][\sobolev][\sobolev]}].
    \end{align}
  From \Cref{cor:freeknotinterpolant}, 
$\tilde{s}\in\Vbd{m}$ with
    \begin{align*}
        n:=\cost_{\mc C}(\tilde{s})&\leq C_7\left(N^2\log_b(N)+\log_b(N)\right)
        \leq C N^{2+\sigma},
    \end{align*}
    for any $\sigma>0$, where $C>0$ depends on $\sigma$.
    Similarly for $\cost_{\mc S}$ and $\cost_{\mc N}$, we obtain from \Cref{cor:freeknotinterpolant}
     that 
        \begin{align*}
        \cost_{\mc S}(\tilde{s})\leq\bar C(N\log_b(N)^2+\log_b(N))
        \leq CN^{1+\sigma},
    \end{align*}
    and 
    \begin{align*}
        \cost_{\mc N}(\tilde s)\leq \bar C(N\log_b(N)+\log_b(N))\leq CN^{1+\sigma},
    \end{align*}
    for any $\sigma>0$ and constants $C$ depending on $\sigma>0.$
    Combining \eqref{eq:errorfreeknot} with \eqref{eq:freeknotreinterpol},
    a triangle inequality
    and the above complexity bounds, we obtain the desired statement.
\end{proof}

\begin{proof}[Proof of \Cref{lemma:polys}]
\label{proof:polys}
    Let $P\in\P_{\bar{m}}$ be arbitrary
    and set $s:=\mc I_{b,d,m}P$.
    From \Cref{lemma:reinterpol},
    we obtain
    \begin{align}\label{eq:errorinterpol}
        \norm{P-s}[p]\leq C_1b^{-d(m+1)}\norm{P^{(m+1)}}[p].
    \end{align}
    From \Cref{ranks-polynomials} we can estimate the complexity of
    $s\in\Vbd{m}$ as
    \begin{align*}
        n:=\cost_{\mc C}(s)\leq b^2+b(d-1)(\m+1)^2+(m+1)^2,    
    \end{align*}
    or
    \begin{align*}
        d\geq \frac{n-b^2+ b(\m+1)^2-(m+1)^2}{b(\m+1)^2}.
    \end{align*}
    Inserting into \eqref{eq:errorinterpol}
    \begin{align*}
        \norm{P-s}[p]\leq C_2b^{-\frac{m+1}{b(\m+1)^2}n}\norm{P^{(m+1)}}[p].
    \end{align*}
    Analogously for $\cost_{\mc N}$
    \begin{align*}
        n:=\cost_{\mc N}(s)\leq d(\m+1),
    \end{align*}
    and
    \begin{align*}
        \norm{P-s}[p]\leq C_2b^{-\frac{m+1}{(\m+1)}n}\norm{P^{(m+1)}}[p].
    \end{align*}
\end{proof}

\begin{proof}[Proof of \Cref{thm:analytic}]
\label{proof:analytic}
    Set
    \begin{align*}
        M:=\sup_{z\in D_\rho}|f(z)|,
    \end{align*}
    and $\bar{m}\in\N$.
    From \cite[Chapter 7, Theorem 8.1]{DeVore93}, we know
    \begin{align}\label{eq:erroranal}
        \inf_{P\in\P_{\bar{m}}}\norm{f-P}[\infty]\leq
        \frac{2M}{\rho-1}\rho^{-\bar{m}}.
    \end{align}
    We aim at approximating an arbitrary polynomial of degree $\bar{m}$
    within $\Vb{m}$. W.l.o.g.\ we can assume $\bar{m}>m$,
    since otherwise $\P_{\bar{m}}\subset\Vb{m}$.
    
    From \eqref{eq:errorinterpol} we know
    \begin{align}\label{approx-error-infini-P}
        \norm{P-s}[\infty]\leq C_1b^{-d(m+1)}
        \norm{P^{(m+1)}}[\infty],
    \end{align}
    for a spline $s = \mathcal{I}_{b,d,m} P$ of degree $m$.    
    To estimate the derivatives $\norm{P^{(m+1)}}[\infty]$, we further
    specify $P$. Let $P$ be the sum of
    Chebyshev polynomials from \cite[Chapter 7, Theorem 8.1]{DeVore93} used to derive
    \eqref{eq:erroranal}. I.e., since $f$ is assumed to be analytic, we
    can expand $f$ into a series
    \begin{align*}
        f(x)=\frac{1}{2}a_0+\sum_{k=1}^\infty a_k C_k(x),
    \end{align*}
    where $C_k$ are Chebyshev polynomials of the first kind of
    degree $k$. We set $P=\Pch$ with 
    \begin{align}\label{eq:cheb}
         \Pch:=\frac{1}{2}a_0+\sum_{k=1}^{\bar{m}}a_kC_k,
    \end{align}
    which is such that 
    \begin{equation}
    \Vert f - \Pch \Vert_p \le  \frac{2M}{\rho-1}\rho^{-\bar{m}}. \label{error-Chebyshev}
     \end{equation}
    For the derivatives of $C_k^{(m+1)}$ we get by standard estimates
    (see, e.g., \cite{Mason2002})
    \begin{align*}
        \norm{C_k^{(m+1)}}[\infty]
        \leq\frac{k^2(k^2-1)\cdots(k^2-m^2)}{(2(m+1)-1)!}.
    \end{align*}
    And thus, for any $1<\rho_0<\rho$,
    \begin{align*}
        \norm{\Pch^{(m+1)}}[\infty]&\leq\frac{1}{(2(m+1)-1)!}
        \sum_{k=m+1}^{\bar{m}}|a_k|k^2(k^2-1)\cdots(k^2-m^2)\\
        &\leq\frac{2M}{(2(m+1)-1)!}
        \sum_{k=m+1}^{\bar{m}}\rho_0^{-k}k^2(k^2-1)\cdots(k^2-m^2).
    \end{align*}
    For $\bar{m}\rightarrow\infty$, this series is a sum of
    poly-logarithmic series,
    and thus it converges to a constant depending on $M$, $m$ and $\rho$.
    
    We can now combine both estimates for the final approximation error. We first consider the approximation error 
    $E_n^{\mathcal{C}}(f)_\infty$. 
    Let $n\in\N$ be large enough such that
    \begin{align*}
        d&:=\left\lfloor b^{-1}n^{1/3}- (m+1) {n^{-2/3}}\right\rfloor>1,\\
        \bar{m}&:=\lfloor n^{1/3}-1\rfloor\geq 1.
    \end{align*}
    For this choice of $d$ and $\bar{m}$,
    let $s\in\Vbd{m}$ be the interpolant of degree $m$ of the Chebyshev polynomial
    $\Pch$ from \eqref{eq:cheb}.
    Then from \Cref{prop:encoding-poly-interpolant}, we obtain
    \begin{align*}
        \cost_{\mc C}(s)\leq bd(\bar{m}+1)^2+ b(m+1)\leq n,
    \end{align*}
    and thus $s\in\Phi_n$.
    Moreover, by \eqref{approx-error-infini-P} and \eqref{error-Chebyshev},
    \begin{align*}
       E_n^{\mathcal{C}}(f)_\infty&\leq\norm{f-\Pch}[\infty]+\norm{\Pch-s}[\infty]
        \leq\frac{2M}{\rho-1}\rho^{-\bar{m}}+
        C'_1b^{-d(m+1)}\norm{\Pch^{(m+1)}}[\infty]\\
        &\leq C'_2[\min(\rho,b^{{(m+1)/}b})]^{-n^{1/3}}.
    \end{align*}
    The result for $E_n^{\mathcal{S}}(f)_\infty$ follows from $\tool^{\mathcal{C}}\subset \tool^{\mathcal{S}}.$
    Now we consider the case  of  $E_n^{\mc N}(f)_\infty$. Let $n\in \N$ be large enough such that 
    $d:= \lfloor n^{1/2} \rfloor>1$ and $\bar m = \lfloor n^{1/2} -1 \rfloor\ge 1$. Then from \Cref{prop:encoding-poly-interpolant}, we obtain
    $$
    \cost_{\mc N}(s) \le d(\bar m+1) \le n.
    $$
    Moreover, by \eqref{approx-error-infini-P} and \eqref{error-Chebyshev},
        \begin{align*}
       E_n^{\mathcal{N}}(f)_\infty&\leq 
         \frac{2M}{\rho-1}\rho^{-\bar{m}}+
        C_1'b^{-d(m+1)}\norm{\Pch^{(m+1)}}[\infty]
        \leq C_2'[\min(\rho,b^{(m+1)})]^{-n^{1/2}}.
    \end{align*}
    \end{proof}

\end{document}